\newtheorem{deff}{Definition}[section]
\newtheorem{lemma}[deff]{Lemma}
\newtheorem{theorem}[deff]{Theorem}
\newtheorem{corollary}[deff]{Corollary}
\newtheorem{proposition}[deff]{Proposition}
\newtheorem{fact}[deff]{Fact}
\newtheorem{Claim}{Claim}
\theoremstyle{definition}
\newtheorem{example}[deff]{Example}
\newtheorem{definition}[deff]{Definition}
\newtheorem{question}[deff]{Question}
\newtheorem{remark}[deff]{Remark}
\def\ker{\mathop{\rm ker}}
\newcommand{\T}{{\mathbb T}}
\newcommand{\Z}{{\mathbb Z}}
\newcommand{\N}{{\mathbb N}}
\def\P{{\mathbb P}}
\newcommand{\cont}{\mathfrak c}
\newcommand{\AVp}{Arhangel'ski\u{\i} property}
\newcommand{\AV}{Arhangel'ski\u{\i}}
\newcommand{\sAV}{projectively \AV}
\newcommand{\B}{semi-Bernstein}
\def\grp#1{\langle{#1}\rangle}
\def\V{\mathcal{V}}
\def\supp#1#2{\mathrm{supp}_{#2}(#1)}
\title{Metrization criteria for compact groups in terms of their dense subgroups}
\author[D. Dikranjan]{Dikran Dikranjan}
\address[D. Dikranjan]{Dipartimento di Matematica e Informatica\\
Universit\`{a} di Udine\\
Via delle Scienze  206, 33100 Udine\\
Italy}
\email{dikran.dikranjan@uniud.it}
\author[D. Shakhmatov]{Dmitri Shakhmatov}
\address[D. Shakhmatov]{Division of Mathematics, Physics and Earth Sciences\\
Graduate School of Science and Engineering\\
Ehime University\\
Matsuyama 790-8577\\
Japan}
\email{dmitri.shakhmatov@ehime-u.ac.jp}
\begin{document}

\dedicatory{Dedicated to Professor A.~V.~Arhangel'ski\u{\i} on the occasion of his 73rd anniversary}

\thanks{The first named author was partially supported by SRA, grants P1-0292-0101 and J1-9643-0101, and by grant MTM2009-14409-C02-01.}

\thanks{The second named author was partially supported by the Grant-in-Aid for
Scientific Research~(C) No.~22540089 by the Japan Society for the Promotion of Science (JSPS)}

\keywords{dual group, determined group, quasi-convexely dense set, pseudocompact, countably compact, $\omega$-bounded, Bernstein set}

\begin{abstract} 
According to Comfort, Raczkowski and Trigos-Arrieta,
a dense subgroup $D$ of a compact abelian group
$G$ determines $G$ if the restriction homomorphism $\widehat{G}\to \widehat{D}$ of the dual groups is a topological isomorphism.
We introduce four conditions on $D$ that are necessary for it to determine $G$ and we resolve the following question:
If one of these conditions holds for every dense (or $G_\delta$-dense) subgroup $D$ of $G$, must $G$ be metrizable? 
In particular,
we prove (in ZFC) 
that a compact abelian group determined by all its $G_\delta$-dense subgroups is metrizable,
thereby resolving Question 5.12(iii) from
 [S.~Hern\' andez, S.~Macario and  F.~J.~Trigos-Arrieta, Uncountable products of determined groups need not be determined, 
{\sl J. Math. Anal. Appl.\/}  348 (2008), 834--842]. 
(Under the additional assumption of the Continuum Hypothesis CH,
the same statement was proved recently by
Bruguera, Chasco, Dom\'\i nguez, Tkachenko and Trigos-Arrieta.)
As a 
tool, we develop a 
machinery for building $G_\delta$-dense 
subgroups without uncountable compact subsets in compact groups of weight $\omega_1$
(in ZFC). 
The construction is 
delicate, as these subgroups must have non-trivial convergent sequences in some models of ZFC.
\end{abstract}

\maketitle

\bigskip
{\em All spaces and topological groups are assumed to be Hausdorff.\/} 
Recall that a topological space $X$ is called:
\begin{itemize}
\item
{\em $\kappa$-bounded\/} (for a given cardinal $\kappa$) if the closure of every subset of $X$ of cardinality at most $\kappa$ is compact,
\item
{\em countably compact\/} if every countable open cover of $X$ has a finite subcover,
\item
{\em pseudocompact\/} if every real-valued continuous function defined on $X$ is bounded.
\end{itemize}

It is well known that 
$$
\mbox{compact}
\to
\mbox{$\kappa$-bounded}
\to
\mbox{$\omega$-bounded}
\to
\mbox{countably compact}
\to
\mbox{pseudocompact}
$$
for every infinite cardinal $\kappa$.

Symbols $w(X)$, $nw(X)$ and $\chi(X)$ denote the weight, the network weight and the character of a space $X$, respectively.
All undefined topological terms can be found in \cite{E}.
 
As usual,
$\N$ denotes the set of natural numbers, $\P$ denotes the set of all prime numbers,
$\Z$ denotes the group of integers, $\Z(p)=\Z/p\Z$ denotes
the cyclic group of order $p\in\P$ with the discrete topology and $\T$ denotes the circle group with its usual topology.
The symbol
$\cont$ denotes the cardinality of the continuum,  $\omega_1$ denotes the first uncountable cardinal and $\omega=|\N|$. Clearly, $\omega<\omega_1$.
By Cantor's theorem, $\omega_1\le\cont$.
The Continuum Hypothesis CH says that $\omega_1=\cont$. We recall that this equality 
is both consistent with and independent of the usual Zermelo-Fraenkel axioms ZFC of set theory \cite{K}.  

Recall that a cardinal $\tau$ is {\em strong limit\/} if $2^\sigma<\tau$ for every cardinal $\sigma<\tau$.
For an ordinal (in particular, for a cardinal) $\alpha$, we denote by $\mathrm{cf}(\alpha)$ the cofinality of $\alpha$. For a cardinal $\kappa$ and a set $X$, the symbol $[X]^{\le\kappa}$ denotes the family of all subsets of set $X$ having cardinality at most $\kappa$. 
All undefined set-theoretic terms can be found in \cite{K}.

\section{Introduction}

Let $G$ be an abelian topological group. We denote by
$\widehat{G}$ the dual group of all continuous characters endowed with the compact-open topology. 
Following \cite{CRT,CRT1}, we say that a dense subgroup $D$ of 
$G$ {\em determines\/} $G$ if the restriction homomorphism $\widehat{G}\to \widehat{D}$ of the dual 
groups is a topological isomorphism. According to \cite{CRT,CRT1}, 
$G$ is said to be {\em  determined\/} if every dense subgroup of $G$ determines $G$.
The cornerstone in this topic is the following theorem due to Chasco and Au\ss enhofer:  

\begin{theorem}\label{CA} \cite{Diss,CM} Every  metrizable abelian group is  determined.  
\end{theorem}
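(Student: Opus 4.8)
The plan is to prove the sharper statement that for \emph{every} dense subgroup $D$ of a metrizable abelian group $G$ the restriction homomorphism $r\colon\widehat{G}\to\widehat{D}$, $r(\chi)=\chi\restriction D$, is a topological isomorphism. The easy half comes first. The map $r$ is injective because $D$ is dense and characters are continuous into the Hausdorff group $\T$; it is continuous because every compact subset of $D$ is compact in $G$, so $r$ pulls a subbasic neighbourhood $\{\psi\in\widehat D:\psi(L)\subseteq V\}$ of $\widehat D$ (with $L\subseteq D$ compact) back to the open set $\{\chi\in\widehat G:\chi(L)\subseteq V\}$; and it is surjective because a continuous character of $D$ is uniformly continuous, hence extends to a continuous character of the completion $\widetilde G$ of $G$ (which coincides with the completion of $D$), whose restriction to $G\subseteq\widetilde G$ is the required preimage. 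Thus the whole content of the theorem is the \emph{openness} of $r$, and only here will metrizability be used.

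Second, I would recast openness in terms of polars. Write $\T_{+}$ for the closed arc of $\T$ of length $1/2$ about the identity, and for $E\subseteq G$ put $E^{\triangleright}=\{\chi\in\widehat G:\chi(E)\subseteq\T_{+}\}$. As $K$ ranges over the compact subsets of $G$ the sets $K^{\triangleright}$ form a neighbourhood base at $0$ in $\widehat G$; and for compact $L\subseteq D$ one has $r^{-1}\bigl(\{\psi\in\widehat D:\psi(L)\subseteq\T_{+}\}\bigr)=\{\chi\in\widehat G:\chi(L)\subseteq\T_{+}\}=L^{\triangleright}$. Hence $r$ is open as soon as the following holds: \emph{for every compact $K\subseteq G$ there is a compact $L\subseteq D$ such that every $\chi\in\widehat G$ with $\chi(L)\subseteq\T_{+}$ satisfies $\chi(K)\subseteq\T_{+}$}; equivalently, by the bipolar theorem for abelian topological groups, every compact subset of $G$ lies in the quasi-convex hull of some compact subset of $D$.

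Third --- the heart of the matter --- I would build such an $L$ from a fixed countable base $(V_{n})_{n\in\N}$ of symmetric neighbourhoods of $0$ in $G$ with $V_{n+1}+V_{n+1}\subseteq V_{n}$. Using compactness of $K$ and density of $D$, one chooses at each level $n$ a finite cover of $K$ by small translates and a map $d_{n}\colon K\to D$ that is constant on the pieces and satisfies $x-d_{n}(x)\in V_{n+\ell_{n}}$ for a prescribed gauge $\ell_{n}$; then $d_{n}(x)\to x$, and the consecutive differences $d_{n+1}(x)-d_{n}(x)$ range, as $x$ varies, over a finite set $B_{n}\subseteq D$ lying in a small neighbourhood of $0$. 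One then lets $L$ consist of $0$, of the finitely many values $d_{0}(x)$, and --- crucially --- of all integer multiples $b,2b,\dots,k_{n}b$ for $b\in B_{n}$ (and likewise multiples of the $d_{0}$-values), where the $k_{n}$ grow fast, say $k_{n}=2^{n+3}$, and $\ell_{n}$ is chosen with $2^{\ell_{n}}\ge k_{n}$ so that these newly added level-$n$ points still lie inside $V_{n}$. Two facts then finish the proof. (a) $L$ is compact: it is a finite set together with a sequence converging to $0$ (each $V_{n}$ contains all but finitely many of the added points), hence a closed, metrizable, sequentially compact subspace of $D$. (b) $L$ witnesses the boxed condition: if $\chi(L)\subseteq\T_{+}$, then by the elementary fact that $t,2t,\dots,kt\in\T_{+}$ forces $t$ into the arc of length $1/(2k)$ about $0$, the distance of $\chi(b)$ from $0$ in $\T$ is at most $1/(4k_{n})$ for $b\in B_{n}$, and $\chi$ is equally small on the $d_{0}$-values; expanding any $x\in K$ as the convergent telescoping sum $x=d_{0}(x)+\sum_{n}\bigl(d_{n+1}(x)-d_{n}(x)\bigr)$, whose partial sums are the $d_{N}(x)$, and applying the continuous character $\chi$ term by term gives a bound $|\chi(x)|\le|\chi(d_{0}(x))|+\sum_{n}1/(4k_{n})<1/4$, so $\chi(x)\in\T_{+}$.

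The step I expect to be the main obstacle is precisely this construction of $L$: it must be lean enough to be compact --- the added points of level $n$ have to sink into $V_{n}$, which forces one to decide how finely $d_{n}$ approximates \emph{before} throwing in the multiples $b,\dots,k_{n}b$ --- yet rich enough that ``$\chi$ small on $L$'' propagates through the chain of local approximants and the auxiliary multiples to an absolutely convergent estimate that controls $\chi$ on all of $K$. Fixing the order of the choices ($\ell_{n}$, then $k_{n}$, then the covers and the maps $d_{n}$) and checking that the telescoping series genuinely converges to $x$ in $G$, so that continuity of $\chi$ may be applied termwise, are the delicate points; the polar/bipolar bookkeeping of the second step is then routine.
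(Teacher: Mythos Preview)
The paper does not prove this theorem; it is quoted, without argument, as a result of Au\ss{}enhofer and Chasco. There is therefore no ``paper's own proof'' to compare against.

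Your sketch is a faithful and essentially correct reconstruction of the Chasco--Au\ss{}enhofer argument. The reduction to openness of the restriction map, the reformulation via polars $K^{\triangleright}$, and the construction of a compact witness $L\subseteq D$ out of finite $D$-valued approximants $d_n\colon K\to D$ together with bounded strings of multiples $b,2b,\dots,k_n b$ of the increments are exactly the ingredients of the original proof. The ``$t,2t,\dots,kt\in\T_{+}$ forces $|t|\le 1/(4k)$'' lemma combined with the telescoping expansion $x=d_0(x)+\sum_n(d_{n+1}(x)-d_n(x))$ is the standard mechanism, and your arithmetic with $k_n=2^{n+3}$ gives $\sum_n 1/(4k_n)=1/16$, leaving ample room once the finitely many $d_0$-values (and their multiples) are handled. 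The one place to be careful in a full write-up is the order of quantifiers you already flag: the gauge $\ell_n$ must be fixed \emph{before} the finite cover at level $n$ so that the added multiples at level $n$ genuinely lie in $V_n$; and compactness of $L$ is then immediate since $L$ is a finite set together with a null sequence in the metrizable group $G$, hence in $D$. No genuine gap.
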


A remarkable partial inverse of this theorem was proved by
Hern\'andez, Macario and Trigos-Arrieta.
(Under the assumption of the Continuum Hypothesis, this was established earlier by Comfort, Raczkowski and Trigos-Arrieta in  \cite{CRT, CRT1}).

\begin{theorem}\label{CHMRT} \cite[Corollary 5.11]{HMT} Every compact determined abelian group is metrizable. 
\end{theorem}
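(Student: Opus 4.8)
The plan is to prove the contrapositive: if a compact abelian group $G$ is not metrizable, then some dense subgroup of $G$ does not determine $G$.

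\emph{Step 1 — reduction to two model cases.} Since $G$ is not metrizable, $\widehat{G}$ is uncountable, and every uncountable abelian group $A$ contains a copy of $\bigoplus_{\omega_1}C$ for some non-trivial cyclic group $C$ (if the torsion-free rank of $A$ is uncountable, take $C=\Z$; otherwise some socle $A[p]$ is uncountable — for if all of them and the torsion-free rank were countable, then $A$ itself would be countable — and take $C=\Z(p)$). Dualizing a subgroup $H\cong\bigoplus_{\omega_1}C$ of $\widehat{G}$ produces a continuous surjection $G\to G/H^{\perp}\cong\widehat{H}\cong(\widehat{C})^{\omega_1}$, so $G$ has $\T^{\omega_1}$ or $\Z(p)^{\omega_1}$ (for some prime $p$) as a quotient. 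Moreover non-determinacy is inherited by preimages under quotient maps: if $\pi\colon G\to G/N$ is a quotient map and $D'\subseteq G/N$ is dense and does not determine $G/N$, then $D:=\pi^{-1}(D')$ is dense in $G$ and does not determine $G$, because for any compact $K\subseteq D$ the set $\pi(K)$ is compact in $D'$, and a non-trivial character of $G/N$ that is small on $\pi(K)$ composes with $\pi$ to a non-trivial character of $G$ that is small on $K$. Hence it suffices to prove that $\T^{\omega_1}$ and $\Z(p)^{\omega_1}$ are not determined.

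\emph{Step 2 — the mechanism, and the easy case.} For a dense subgroup $D$ of a compact group $G$ the restriction $\widehat{G}\to\widehat{D}$ is injective ($D$ dense) and surjective (every continuous character of the precompact group $D$ extends to its completion $G$), hence an abstract isomorphism; since $\widehat{G}$ is discrete, $D$ determines $G$ if and only if $\widehat{D}$ is discrete. Unwinding the compact-open topology — and using the standard device of replacing a compact $K$ by $K\cup 2K\cup\dots\cup mK$ to shrink a basic neighbourhood of $0$ — one sees this happens exactly when $D$ contains a compact set $K$ that is \emph{quasi-convexly dense} in $G$, i.e.\ the only $\chi\in\widehat{G}$ with $\chi(K)\subseteq[-1/4,1/4]$ (mod $1$) is $\chi=0$; equivalently, the quasi-convex hull $\mathrm{qc}_{G}(K)$ equals $G$. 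One always has $\mathrm{qc}_{G}(K)\subseteq\overline{\langle K\rangle}$, so it is enough to exhibit a dense subgroup none of whose compact subsets topologically generates $G$. For $G=\Z(p)^{\omega_1}$ this is easy: take any countable dense subgroup $D$ (one exists, as $d(\Z(p)^{\omega_1})=\omega$); every compact $K\subseteq D$ is metrizable and zero-dimensional, so $C(K,\Z(p))$ has countable $\Z(p)$-dimension, whereas $\overline{\langle K\rangle}=\Z(p)^{\omega_1}$ would force the $\omega_1$ coordinate restrictions $\pi_{\alpha}|_{K}$ to be $\Z(p)$-linearly independent in $C(K,\Z(p))$ — impossible.

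\emph{Step 3 — the obstacle: $G=\T^{\omega_1}$.} Here the previous argument fails, because a metrizable compact subset of $\T^{\omega_1}$ \emph{can} topologically generate it: for rationally independent reals $\{r_{\alpha}:\alpha<\omega_1\}\cup\{1\}$ the arc $K=\{(r_{\alpha}t\bmod 1)_{\alpha<\omega_1}:t\in[0,1]\}$ is homeomorphic to $[0,1]$ and satisfies $\overline{\langle K\rangle}=\T^{\omega_1}$ (though one checks $\mathrm{qc}_{\T^{\omega_1}}(K)\ne\T^{\omega_1}$, so this particular $K$ is not quasi-convexly dense). So one cannot just take a countable dense subgroup; instead one builds the dense subgroup $D$ by hand, along the inverse-limit presentation $\T^{\omega_1}=\varprojlim_{\alpha<\omega_1}\T^{\alpha}$, by a transfinite recursion of length $\omega_1$ that keeps $D$ dense while forcing each compact subset of $D$ to stay trapped, block by block, inside proper quasi-convex sets, so that its quasi-convex hull omits $\T^{\omega_1}$. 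A natural candidate for $D$ is the ``small'' dense subgroup $\bigoplus_{\omega_1}(\mathbb{Q}/\Z)$ of finitely supported torsion elements; one then has to verify that every compact $K\subseteq D$ satisfies $\mathrm{qc}_{\T^{\omega_1}}(K)\ne\T^{\omega_1}$, which reduces to an analysis of quasi-convex hulls of compact subsets of $\T^{\omega_1}$. (Alternatively, granted Step 1, one may simply invoke the theorem that an uncountable product of non-trivial metrizable groups is not determined.) The technical heart — and the step I expect to be the main difficulty — is precisely this control of the (in ZFC possibly uncountably many) metrizable compact subsets of $D$: the recursion of length $\omega_1$ is long enough only because every compact subset of $\T^{\omega_1}$ reflects to one determined by an initial segment of the coordinates, so that one diagonalizes against an $\omega_1$-indexed family of tests rather than against all compact sets individually.
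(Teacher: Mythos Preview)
Your Steps 1 and 2 are sound and close to the paper's route: reduce to a quotient of weight $\omega_1$, then exhibit a dense subgroup that fails to determine it, then pull back. The paper's version of Step~1 is Fact~\ref{cont:images:of:smaller:weight} together with Fact~\ref{pull:back}, and it does not even need the dichotomy $\T^{\omega_1}$ versus $\Z(p)^{\omega_1}$.

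The genuine gap is Step~3. You correctly observe that the crude criterion ``$\overline{\langle K\rangle}\ne G$'' fails for $\T^{\omega_1}$: a countable compact $K$ inside a countable dense $D$ can have $\langle K\rangle$ dense. But you then abandon the countable dense subgroup and reach for a transfinite construction you do not complete. That detour is unnecessary, because the right invariant is \emph{weight}, not topological generation. If $K\subseteq D$ is compact and $D$ is countable, then $K$ is countable, hence $w(K)\le\omega$. If $K$ were quasi-convexly dense in $G$, the restriction map would embed $\widehat{G}$ as a \emph{discrete} subgroup of $C(K,\T)$, whence
\[
\omega_1=|\widehat{G}|\le w(C(K,\T))=w(K)+\omega\le\omega,
\]
a contradiction. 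This is exactly the argument of Theorem~\ref{size:of:determined:subgroup} in the paper (your own dimension count for $C(K,\Z(p))$ in Step~2 is the mod-$p$ shadow of this inequality). So a single countable dense subgroup of \emph{any} compact group of weight $\omega_1$ already fails to determine it; no separate treatment of $\T^{\omega_1}$, no recursion, and no analysis of $\bigoplus_{\omega_1}(\mathbb{Q}/\Z)$ is needed.

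In the paper's language: a countable dense $D$ has $|D|<w(D)$, so $D$ lacks the \AVp, hence is not $w$-compact, hence (by the contrapositive of Theorem~\ref{size:of:determined:subgroup}) does not determine $G$. This is how the paper re-derives Theorem~\ref{CHMRT} inside Corollary~\ref{huge:corollary} via Theorem~\ref{nonAV:inside}. Your Step~3 should be replaced by this one-line weight estimate.
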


While Theorem \ref{CA} says that {\em every\/} dense subgroup of a metrizable abelian group determines it, Theorem \ref{CHMRT}  asserts
that every non-metrizable compact abelian group necessarily contains {\em some\/} dense subgroup that does not determine it.

A subgroup $D$ of a topological group $G$ is  called
{\em $G_\delta$-dense\/} in $G$ if $D\cap B\not=\emptyset$ for every non-empty $G_\delta$-subset $B$ of $G$ \cite{CR}. The following classical result is due to 
Comfort and Ross \cite{CR}:
\begin{theorem}
\label{CR:theorem}
A dense subgroup $D$ of a compact group $G$ is pseudocompact if and only if $D$ is $G_\delta$-dense in $G$.
\end{theorem}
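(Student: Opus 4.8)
The statement is the classical theorem of Comfort and Ross, and the plan is to prove both implications directly from the structure theory of compact groups. The one structural input I will use throughout is that a compact group $G$ is the projective limit of its metrizable (in fact Lie) quotients: by the Peter--Weyl theorem the kernels of finite-dimensional unitary representations of $G$ separate points, have compact metrizable quotients, and are closed under finite intersections, so the closed normal subgroups $N$ with $G/N$ metrizable form a base of neighbourhoods of the identity. The convenient consequence is the following: given a point $x$ and a sequence $(O_n)_{n\in\omega}$ of open sets with $x\in O_n$ for every $n$, one may choose closed normal subgroups $M_n\subseteq x^{-1}O_n$ with $G/M_n$ metrizable and put $N=\bigcap_n M_n$; then $N$ is a closed normal $G_\delta$-subgroup of $G$ (it is the kernel of the map into the countable product $\prod_n G/M_n$ of metrizable groups), the quotient $G/N$ is compact, and $xN\subseteq\bigcap_n O_n$.

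For the implication ``pseudocompact $\Rightarrow$ $G_\delta$-dense'' I argue contrapositively. Suppose $D\cap B=\emptyset$ for some non-empty $G_\delta$-set $B=\bigcap_n O_n$; fix $x\in B$ and, as above, a closed normal $G_\delta$-subgroup $N$ with $xN\subseteq B$, so that $xN\cap D=\emptyset$. For the quotient map $q\colon G\to G/N$ this forces $q(x)\notin q(D)$, hence $q(D)$ is a proper subgroup of $G/N$; it is dense because $q$ is a continuous surjection and $D$ is dense, and $G/N$ is infinite because a dense subgroup of a finite group is the whole group. A proper dense subgroup of an infinite compact metrizable group is not closed, hence not compact, hence (being metrizable) not pseudocompact, so it carries an unbounded continuous real-valued function; composing it with $q|_D\colon D\to q(D)$ produces an unbounded continuous function on $D$, so $D$ is not pseudocompact.

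The converse, that $G_\delta$-denseness implies pseudocompactness, is the substantial direction. Assuming $D$ is $G_\delta$-dense, I show that $D$ is feebly compact (hence pseudocompact, $D$ being Tychonoff): no sequence $(V_n)_{n\in\omega}$ of non-empty open subsets of $D$ is locally finite. Fix such a sequence, choose non-empty open $W_n\subseteq G$ with $W_n\cap D=V_n$ and points $g_n\in W_n$, and apply the consequence of the first paragraph to the neighbourhoods $W_n$ of the $g_n$, obtaining a \emph{single} closed normal $G_\delta$-subgroup $N$ with $g_nN\subseteq W_n$ for every $n$ and $G/N$ compact. Since $N$ is $G_\delta$, every coset of $N$ is a non-empty $G_\delta$-set, so $G_\delta$-denseness of $D$ makes $q|_D\colon D\to G/N$ surjective, where $q\colon G\to G/N$. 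Now let $y\in G/N$ be a cluster point of the sequence $(q(g_n))_n$ (it exists as $G/N$ is compact) and, using surjectivity, choose $d_0\in D$ with $q(d_0)=y$.

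It remains to check that every neighbourhood of $d_0$ in $D$ meets infinitely many $V_n$; this contradicts local finiteness of $(V_n)$ and finishes the proof. Let $U\subseteq G$ be open with $d_0\in U$. As $q$ is an open map, $q(U)$ is a neighbourhood of $y$, so $q(g_n)\in q(U)$ for infinitely many $n$; for each such $n$ we have $g_n\in q^{-1}(q(U))=UN$, say $g_n=um$ with $u\in U$ and $m\in N$, whence $u=g_nm^{-1}\in g_nN\subseteq W_n$ and therefore $U\cap W_n\neq\emptyset$; since $D$ is dense in $G$, this non-empty open set meets $D$, giving $(U\cap D)\cap V_n=(U\cap W_n)\cap D\neq\emptyset$. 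I expect this converse to be the main obstacle: one has no way to extend the given function (or control the sets $W_n$) on all of $G$, and the entire argument rests on first compressing the whole countable sequence into one metrizable quotient $G/N$ and then using that $N$ is $G_\delta$ together with $G_\delta$-denseness of $D$ to make $D$ surject onto $G/N$ --- that surjectivity is exactly what lets a cluster point found in the quotient be pulled back into $D$. Arranging both features at once (one $N$ absorbing countably many $W_n$, and $N$ being $G_\delta$) is the delicate step, and it is taken care of by the countable intersection $N=\bigcap_n M_n$. The forward direction is routine once this passage to a metrizable quotient is available.
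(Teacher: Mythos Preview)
The paper does not supply its own proof of this statement: Theorem~\ref{CR:theorem} is quoted as the classical Comfort--Ross theorem with a citation to \cite{CR}, and no argument is given. So there is nothing in the paper to compare your proof against.

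That said, your proof is correct and is essentially the standard Comfort--Ross argument. Both directions rest on the key structural fact you isolate in the first paragraph: in a compact group the closed normal subgroups with metrizable quotient form a neighbourhood base at the identity, and any countable intersection of them is still a closed normal $G_\delta$-subgroup with metrizable quotient (via the embedding into $\prod_n G/M_n$). One small expository point: in your first paragraph you only record that $G/N$ is ``compact'', but in the forward direction you use that $G/N$ is \emph{metrizable} (so that a proper dense subgroup fails pseudocompactness); this does follow from the embedding you mention, so it is not a gap, just worth stating explicitly. Similarly, in the converse direction you apply the first-paragraph construction with varying basepoints $g_n$ rather than a single $x$; this is the obvious adaptation (take $M_n$ with $g_nM_n\subseteq W_n$ and intersect), and the rest of the argument---surjectivity of $q\restriction_D$ from $G_\delta$-density, picking a cluster point in the quotient, and pulling it back---goes through exactly as you wrote.
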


The following question was asked by Hern\'andez, Macario and Trigos-Arrieta in 
\cite[Question 5.12(iii)]{HMT}:

\begin{question}\label{q3}
Does there exist (in ZFC) a non-metrizable compact abelian group $G$ such that every $G_\delta$-dense subgroup $D$ of $G$ determines $G$?
\end{question}

This question was also repeated in \cite[Question 4.12]{DFH}.

It is useful to state explicitly the negation of the statement in Question \ref{q3}:

\begin{question}\label{P-form}
Let $G$ be a compact abelian group such that every $G_\delta$-dense subgroup of $G$ determines $G$. Must $G$ be metrizable (in ZFC)?
\end{question}

By Theorem \ref{CR:theorem}, one can replace ``$G_\delta$-dense''
by ``dense pseudocompact'' in both questions to get their equivalent versions.

Theorem \ref{CHMRT} says that a compact abelian group $G$ is metrizable provided that every dense subgroup of $G$ determines it. Since
$G_\delta$-dense subgroups of $G$ are dense in $G$, a positive answer to Question \ref{P-form} (equivalently, a negative 
answer to Question \ref{q3}) would provide a strengthening of  Theorem \ref{CHMRT}, because one would get the same conclusion 
under a weaker assumption of requiring only a much smaller family of $G_\delta$-dense subgroups of $G$ to determine it.
One of the goals of this paper is to accomplish precisely this, without  recourse to any additional set-theoretic assumptions beyond Zermelo-Fraenkel axioms
ZFC of set theory.

\begin{remark}
\label{consistent:solution}
Chasco, Dom\'\i nguez and Trigos-Arrieta proved recently that every compact abelian group $G$ with  $w(G)\ge \cont$ has a $G_\delta$-dense subgroup which does not determine $G$ \cite[Theorem 14]{CDT}. Independently, Bruguera and Tkachenko proved  that every compact abelian group $G$ with $w(G)\ge \cont$ contains a proper $G_\delta$-dense reflexive subgroup $D$ \cite[Theorem 4.7]{BT}. 
As mentioned in the the end of \cite[Section 3]{CDT}, this $D$ cannot determine $G$.
(Indeed, $\widehat{\widehat{D}}=D\not=G=\widehat{\widehat{G}}$ implies $\widehat{D}\not=\widehat{G}$.)
It is clear that, 
{\em 
under the assumption of the Continuum Hypothesis, these results yield a consistent positive answer to Question \ref{P-form} and therefore, a consistent negative 
answer to Question \ref{q3}.
}
\end{remark}

An overview of the paper follows. Inspired by Questions~\ref{q3} and \ref{P-form}, in
Section~\ref{necess:section} we introduce four properties that every dense 
subgroup determining a compact abelian group must have (see Diagram 1),
thereby making a first attempt to clarify the ``fine structure'' of the notion of determination.
Section~\ref{sec:properties} collects some basic facts about the 
introduced properties that help the reader in better understanding of these new notions.

In Section \ref{metrization:section}, we investigate what happens to a compact  group when all its dense (or all its $G_\delta$-dense) subgroups are assumed to have one of the four properties introduced in Section~\ref{necess:section}. Our results in Section \ref{metrization:section} substantially clarify the ``fine structure'' of the notion of a determined group by addressing the following question systematically: ``How much determination'' of a compact group is really necessary in order to get its metrization in the spirit of Theorem \ref{CHMRT} or Question~\ref{P-form}? As it turns out, such metrization criteria can be obtained under much weaker conditions than full determination; see Theorems \ref{nonAV:inside} and \ref{ZFC}. In turn, Theorems \ref{strong:limit}, \ref{nonAV:Gdelta} and \ref{cc:are:sAV} serve to demonstrate that the conditions equivalent to the metrization of a compact group in Theorems \ref{nonAV:inside} and \ref{ZFC}  are the best possible, thereby pinpointing the exact property among the four necessary conditions ``responsible'' for both the validity of Theorem \ref{CHMRT} and the positive answer to Question~\ref{P-form}. The answer to Question~\ref{P-form} itself comes as a particular corollary of the main result; see Corollary \ref{ZFC:corollary}. An added bonus of our approach is that many results in this section 
hold for non-abelian compact groups as well, whereas the notion of a determined group is restricted to the abelian case. 
(A non-commutative version of a determined group was introduced recently in \cite{FH}.)

In Section \ref{killing:uncountable:compacta} we develop a  machinery 
for constructing  $G_\delta$-dense subgroups $D$ without uncountable compact subsets in compact groups $G$ 
of weight $\omega_1$.
Furthermore, when $G$ belongs to a fixed variery $\V$ of groups, the subgroup $D$ can be chosen to be a free group in the variety $\V$. 
Our machinery works in ZFC alone.
As in 
Section \ref{metrization:section}, results in this section do not 
require
$G$ to be abelian. 
The primary novelty here is our ability to handle successfully small weights of $G$ (like $\omega_1$) at the expense of ``killing'' only {\em uncountable\/} compact subsets of $D$.
All known constructions in the literature usually ``kill'' all {\em infinite\/} compact subsets, thereby eliminating also all non-trivial convergent sequences in $D$, but this stronger conclusion is accomplished at the expense of having been able to handle only groups $G$ of weight $\cont$. In fact, this difference is inherent in the nature of the problem and not purely coincidental. Indeed, Remark \ref{MA:remark} shows that the group $D$ we construct must have non-trivial convergent sequences under some additional set-theoretic assumptions.
As a particular corollary of our results, we produce a pseudocompact group topology on the free group $F_{\cont}$ with $\cont$ many generators without uncountable compact subsets. A recent result by Thom \cite{Thom} implies that such 
a topology on $F_{\cont}$ must necessarily contain a non-trivial convergent sequence; see Remark \ref{free:remark}.

Sections \ref{basic:section}, \ref{proof:of:variety:theorem} and \ref{sec:8}
are devoted to the proofs of 
the main results from Sections \ref{metrization:section} and \ref{killing:uncountable:compacta}. Section \ref{sec:examples} contains some examples showing
the 
limits of our results, and Section \ref{sec:problems} lists open problems related to the topic of this paper.

\section{Four necessary conditions for determination of a compact abelian group}
\label{necess:section}

In this section we introduce four conditions and show that they are all necessary for determination of a compact abelian group.

\begin{definition}
\label{def:w-compact:AV}
Let $X$ be a space.
\begin{itemize}
\item[(i)] We shall say that $X$ is {\em $w$-compact\/} if there exists a compact subset $C$ of $X$ such that $w(C)=w(X)$.
\item[(ii)] We shall say that $X$ has
the {\em \AVp\/} (or is an {\em \AV\ space\/}) provided that $w(X)\le |X|$.
\end{itemize} 
\end{definition}
The letter $w$ in front of ``compact'' in item (i) is intended to abbreviate the word ``weight'', but one can also view it as an abbreviation of the word 
``weak'', as every compact space is obviously $w$-compact. 

The name for the class of spaces in item (ii) was chosen to pay tribute to the first manuscript of Professor Arhangel'ski\u{\i} \cite{Arh0} where he introduced the notion of 
network weight and demonstrated its importance in the study of compact spaces.
A celebrated result of 
Arhangel'ski\u\i\ from  \cite{Arh0} says that $w(X)=nw(X)\le|X|$ for every 
compact space $X$. In our terminology, this means 
that every compact space has the 
 \AVp. 
In fact, a bit more can be said. Indeed, let $X$ be a $w$-compact space. Then $X$ contains a compact subset $C$ such that $w(C)=w(X)$. Combining this with the above result
of Arhangel'ski\u\i, we obtain $w(X)=w(C)\le |C|\le |X|$. Therefore, $X$ has the \AVp.
This argument shows that
\begin{itemize}
\item[($\alpha$)]
 a $w$-compact space has the \AVp.
\end{itemize}

\begin{definition}
\label{def:projective}
Let $G$ be a topological group. 
\begin{itemize}
\item[(i)] We shall say that $G$ is {\em projectively $w$-compact\/} if every continuous homomorphic image of $G$ is $w$-compact.
\item[(ii)]  We shall say that $G$ is {\em \sAV\/} if every continuous homomorphic image of $G$  has the \AVp. 
\end{itemize}
\end{definition}

Since compactness is preserved by continuous images and compact spaces are $w$-compact, all compact groups are projectively $w$-compact.
 From ($\alpha$) and Definition \ref{def:projective}(ii) we get
\begin{itemize}
\item[($\beta$)]
projectively $w$-compact groups are \sAV.
\end{itemize}

The following necessary condition for determination was found by the authors in \cite{DS2}. Since it plays a crucial role in the present paper,
we provide a shorter self-contained proof of this result requiring no recourse to the  notion of qc-density that was essential in \cite{DS2}.

\begin{theorem}
\label{size:of:determined:subgroup} \cite[Corollary 2.4]{DS2}
If a subgroup $D$ of an infinite compact abelian group $G$ determines $G$, then $D$ contains a compact subset $X$ such that $w(X)=w(D)$.
\end{theorem}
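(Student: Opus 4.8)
The plan is to contrapose and to exploit the duality between the weight of a compact abelian group and the cardinality of its dual. Recall that for a compact abelian group $G$ one has $w(G)=|\widehat{G}|$, and more generally $w(H)=|\widehat{H}|$ for every compact abelian quotient $H$ of $G$. The key observation is that the compact subsets of a dense subgroup $D$ of $G$ control, via duality, the "size" of characters that can be separated on $D$. More precisely, I would first establish the following reformulation: if every compact subset $X\subseteq D$ satisfies $w(X)<w(D)$, then for every compact $X\subseteq D$ the closure $\overline{X}$ in $G$ is a compact subgroup-free set whose generated subgroup $\langle X\rangle$ is contained in a proper closed subgroup $N_X$ of $G$ with $w(G/N_X)$ small — the point being that $w(\overline{X})=w(X)<w(D)=w(G)$ since $D$ is dense.

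Next I would assemble these pieces. Suppose, toward a contradiction, that $D$ determines $G$ but every compact $X\subseteq D$ has $w(X)<w(D)=w(G)=:\kappa$. Since $D$ determines $G$, the restriction map $\widehat{G}\to\widehat{D}$ is an isomorphism, so in particular it is surjective: every continuous character of $D$ extends to $G$. I would use this together with the structure of $\widehat{G}$ as a discrete group of cardinality $\kappa$. Enumerate a subset of $\widehat{G}$ witnessing the weight; the idea is to build, by a diagonalization of length $\mathrm{cf}(\kappa)$ or by a direct counting argument, a character $\chi\in\widehat{D}$ that is continuous on every compact subset of $D$ (being automatically continuous there is the crux — a homomorphism on $D$ that is continuous on each compact subset need not be continuous, so one must arrange continuity of $\chi$ on $D$ itself) but which cannot be the restriction of any element of $\widehat{G}$, because every element of $\widehat{G}$ is "determined" on a compact subset of $G$ meeting $D$ in something too small. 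This contradicts surjectivity of $\widehat{G}\to\widehat{D}$.

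The cleaner route, which I would actually pursue, is to invoke the known relationship between the compact subsets of $D$ and the pointwise-convergence topology: the topology of $\widehat{D}$ is the topology of uniform convergence on compact subsets of $D$, and if all compact subsets of $D$ have weight $<\kappa$ one shows that $\widehat{D}$, as a topological group, has a dense subgroup of size $<\kappa$ arising from $\bigcup\{\widehat{\langle X\rangle}: X\subseteq D \text{ compact}\}$, forcing $w(\widehat{D})<\kappa$. On the other hand, since $D$ determines $G$, $\widehat{D}\cong\widehat{G}$ as topological groups, and $\widehat{G}$ is discrete of cardinality $\kappa$, so $w(\widehat{D})=\kappa$. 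This contradiction proves the theorem.

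The main obstacle I anticipate is the step asserting that "all compact subsets of $D$ have small weight" forces the dual $\widehat{D}$ to have small weight: one must control the compact-open topology on $\widehat{D}$ in terms of a cofinal family of compact sets, and verify that the characters detecting the full weight of $\widehat{G}\cong\widehat{D}$ would have to "live" on a single compact subset of $D$ of full weight $\kappa$ — which is precisely what is being excluded. Making this rigorous requires care about whether $w(\widehat D)$ is computed by evaluation on a cofinal family of compact sets, and handling the case where $\kappa$ has uncountable cofinality versus $\mathrm{cf}(\kappa)=\omega$ separately; the regular and singular cases of $\kappa$ may need slightly different bookkeeping.
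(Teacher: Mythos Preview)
Your proposal assembles the right ingredients---the duality $w(G)=|\widehat{G}|$, the compact-open topology on $\widehat{D}$, and the isomorphism $\widehat{D}\cong\widehat{G}$---but it misses the one observation that makes the argument short and bypasses all the difficulties you anticipate. Since $D$ determines $G$, the group $\widehat{D}\cong\widehat{G}$ is \emph{discrete}. Discreteness in the compact-open topology means precisely that $\{0\}$ is a basic open set: there exist a \emph{single} compact $X\subseteq D$ and an open neighbourhood $V$ of $0$ in $\T$ with $W(X,V)=\{\chi\in\widehat{G}:\chi(X)\subseteq V\}=\{0\}$. This one set $X$ is the witness; no contraposition, no cofinal family of compact sets, no regular/singular case split is needed.

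Your ``cleaner route'' has a genuine gap. The assertion that $\widehat{D}$ acquires a dense subgroup of size $<\kappa$ from $\bigcup\{\widehat{\langle X\rangle}: X\subseteq D\text{ compact}\}$ is not well-defined (there is no canonical embedding of $\widehat{\langle X\rangle}$ into $\widehat{D}$), and even if one interprets it as restrictions, the cardinality bound fails: there may well be $\ge\kappa$ compact subsets of $D$ each of weight $<\kappa$, so the union need not have size $<\kappa$. Moreover, since $\widehat{D}$ is already discrete, any dense subgroup equals $\widehat{D}$ itself, so the density statement carries no information.

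The paper's proof runs directly from the single set $X$: the restriction map $\pi:\widehat{G}\to C(X,\T)$ is injective (its kernel lies in $W(X,V)=\{0\}$), and the image $H=\pi(\widehat{G})$ is a discrete subgroup of $C(X,\T)$ because the open set $\{f:f(X)\subseteq V\}$ meets $H$ only in $0$. Hence
\[
w(G)=|\widehat{G}|=|H|=w(H)\le w(C(X,\T))=w(X)+\omega.
\]
A two-line argument shows $X$ must be infinite (otherwise $C(X,\T)=\T^X$ is compact and its discrete subgroup $H$ would be finite), so $w(G)\le w(X)$; the reverse inequality is trivial since $X\subseteq D\subseteq G$.
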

\begin{proof}
For a subset $X$ of $G$ and an open neighbourhood $V$ of $0$ in $\T$, let
$W(X,V)=\{\chi\in\widehat{G}: \chi(X)\subseteq V\}$.
Since $D$ determines $G$ and $\widehat{G}$ is discrete, there exists a compact subset $X$ of $D$
and an open neighbourhood $V$ of $0$ such that $W(X,V)=\{0\}$.
Let $\pi:\widehat{G}\to C(X,\T)$ be the restriction homomorphism defined by $\pi(\chi)=\chi\restriction_X$ for $\chi\in\widehat{G}$,
where $C(X,\T)$ denotes the group of all continuous functions from $X$ to $\T$ equipped with 
the compact-open topology.
Since $\ker \pi\subseteq W(X,V)=\{0\}$, $\pi$ is a monomorphism, and so 
$w(G)=|\widehat{G}|=|H|$, where $H=\pi(\widehat{G})$.
Furthermore, 
$U\cap H=\{0\}$, where $U=\{f\in C(X,\T): f(X)\subseteq V\}$ is an open subset of $C(X,\T)$, so $H$ is a discrete subgroup of $C(X,\T)$.
Therefore, $|H|=w(H)\le w(C(X,\T))=w(X)+\omega$ by \cite[Proposition 3.4.16]{E}. This proves that $w(G)\le w(X)+\omega$.
To finish the proof of the inequality $w(G)\le w(X)$, it suffices to show that $X$ is infinite. Indeed, assume that  $X$ is finite. Then $C(X,\T)=\T^X$ is compact, 
and so the discrete subgroup $H$ of $C(X,\T)$ must be finite. This contradicts the fact that $|H|=|\widehat{G}|\ge\omega$, as $G$ is infinite. Finally, the reverse inequality $w(X)\le w(G)$ is clear.
\end{proof}

The relevance of the four notions introduced in Definitions 
\ref{def:w-compact:AV}
and
\ref{def:projective}
to the topic of our paper is evident from the following corollary of this theorem.
\begin{corollary}
\label{tot:ps:min}
If a subgroup $D$ of a compact abelian group determines it, then $D$ is projectively $w$-compact.
\end{corollary}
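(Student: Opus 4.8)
The plan is to derive the statement from Theorem~\ref{size:of:determined:subgroup} by showing that every continuous homomorphic image of $D$ is, up to topological isomorphism, a \emph{determining} dense subgroup of a compact abelian group. So let $q\colon D\to H$ be a continuous surjective homomorphism onto a topological group $H$. First I would note that $H$ is totally bounded: being a subgroup of the compact group $G$, the group $D$ is totally bounded, and a continuous homomorphic image of a totally bounded group is again totally bounded (cover $D$ by finitely many translates of $q^{-1}(U)$ for a given neighbourhood $U$ of $0$ in $H$, and apply $q$). Consequently $H$ sits as a dense subgroup in its completion $\widetilde H$, which is a compact abelian group inducing on $H$ exactly its original topology.

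The key step is to prove that $H$ determines $\widetilde H$. Since $D$ determines $G$, the restriction map $\widehat G\to\widehat D$ is a topological isomorphism; as $\widehat G$ is discrete ($G$ being compact), $\widehat D$ is discrete, so --- arguing exactly as in the proof of Theorem~\ref{size:of:determined:subgroup} --- there are a compact subset $F$ of $D$ and an open neighbourhood $V$ of $0$ in $\T$ with $W(F,V)=\{\chi\in\widehat G:\chi(F)\subseteq V\}=\{0\}$. I claim this ``local witness'' transports through $q$. Indeed, $q(F)$ is a compact subset of $H$, and if $\psi\in\widehat H$ satisfies $\psi(q(F))\subseteq V$, then $\psi\circ q\in\widehat D$ and, viewed in $\widehat G$ via the restriction isomorphism, it lies in $W(F,V)=\{0\}$; hence $\psi\circ q=0$ and therefore $\psi=0$ by surjectivity of $q$. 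Thus $\{0\}$ is open in the compact-open topology of $\widehat H$, i.e.\ $\widehat H$ is discrete. Since $\widehat{\widetilde H}$ is discrete too ($\widetilde H$ compact) and the restriction homomorphism $\widehat{\widetilde H}\to\widehat H$ is an algebraic isomorphism ($H$ being dense in $\widetilde H$), it is a topological isomorphism, so $H$ determines $\widetilde H$.

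Now the conclusion is immediate: if $\widetilde H$ is infinite, Theorem~\ref{size:of:determined:subgroup} produces a compact subset $X\subseteq H$ with $w(X)=w(H)$, while if $\widetilde H$ is finite then $H=\widetilde H$ is finite and trivially contains such an $X$. In either case $H$ is $w$-compact. As $H$ was an arbitrary continuous homomorphic image of $D$, this shows that $D$ is projectively $w$-compact.

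The main obstacle, I expect, is getting the topology of $H$ right rather than any deep fact: one must pass to the completion $\widetilde H$ (and not, say, to a quotient of $G$ by the closure of $\ker q$, which would in general carry a strictly finer topology on the image of $D$), and the two facts that make this work --- total boundedness is inherited by continuous images, and the local witness $(F,V)$ of the determination of $G$ by $D$ pushes forward to one for the determination of $\widetilde H$ by $H$ --- are both elementary; everything else is bookkeeping plus an appeal to Theorem~\ref{size:of:determined:subgroup}.
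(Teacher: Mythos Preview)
Your proof is correct and follows the same route as the paper's: show that any continuous homomorphic image $H$ of $D$ determines its compact completion, and then invoke Theorem~\ref{size:of:determined:subgroup}. The only difference is that where the paper extends $q$ to $G$ and quotes \cite[Corollary~3.15]{CRT1} for the preservation of determination under continuous surjective homomorphisms, you give a direct self-contained argument by pushing the compact witness $(F,V)$ forward through $q$ to obtain discreteness of $\widehat H$.
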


\begin{proof}
Let $D$ be a dense subgroup of a compact abelian group $G$ that determines $G$, and let $f:D\to N$ be a continuous homomorphism 
onto some topological group $N$. Then $f$ can be extended to a continuous group homomorphism from $G$ to the completion $H=\widehat{N}$ of $N$,
and we denote this extension by the same letter $f$. Since $D$ determines $G$, the dense subgroup $f(D)$ 
of the compact group $f(G)=H$ determines $H$ \cite[Corollary 3.15]{CRT1}.
If $H$ is finite, then $f(D)=H$ is compact, so trivially $w$-compact. 
If $H$ is infinite, we apply Theorem \ref{size:of:determined:subgroup} to conclude that 
$f(D)$ contains a compact set $X$ with $w(X)=w(H)=w(f(D))$. That is, $f(D)$ is $w$-compact. 
This shows that $D$ is projectively $w$-compact.
\end{proof}

The relations between the properties introduced above in the class of precompact abelian groups can be summarized 
in the following diagram: 

\medskip
\begin{center}
$\phantom{MM}
{\xymatrix@!0@C5.4cm@R=1.5cm{
 \mbox{compact} \ar@{->}[r]  &  \mbox{determining the completion}\ar@{->}[d]^{(\ref{tot:ps:min})}   & \mbox{metrizable}\ar@{->}[l]_{\hskip50pt(\ref{CA})}  \\
  & \mbox{projectively $w$-compact}\ar@{->}[d] \ar@{->}[r]^{\!\!(\beta)}  & \mbox{\sAV}\ar@{->}[d] \\
   &\mbox{$w$-compact} 
\ar@{->}[r]^{(\alpha)} &  \mbox{\AV} \\
    } }$
\end{center}
\smallskip
\begin{center}
Diagram 1.
\end{center}

\medskip

This diagram shows that four properties from Definitions \ref{def:w-compact:AV} and \ref{def:projective}
are necessary for determination of the completion of a precompact abelian group.
With an exception of the arrow (\ref{tot:ps:min}), none of the other arrows in Diagram 1 are invertible.

A dense subgroup $D$ of a compact group $G$ that determines $G$ 
need not
be either compact or metrizable. To see this, it suffices to recall that the direct sum 
$\bigoplus_{\alpha<\omega_1} \T$ of $\omega_1$ copies of $\T$ 
determines $\T^{\omega_1}$; see \cite[Corollary 3.12]{CRT1}. 

In Example \ref{proj:AV:not:proj:W-compact},
we exhibit a pseudocompact \sAV\ group $D$ that is not $w$-compact. (Furthermore, under the assumption of the CH, 
$D$ can be chosen to be even countably compact.) In particular, neither the arrow ($\alpha$) nor the arrow ($\beta$) is reversible.

For every infinite cardinal $\kappa$, there exists a $\kappa$-bounded $w$-compact (thus, \AV) abelian group that is not \sAV\ (and so 
is not projectively $w$-compact); see Example \ref{ex:omega-bounded}.

We do not know if the arrow (\ref{tot:ps:min}) in Diagram 1 is invertible. In fact,
it is tempting to conjecture that Corollary \ref{tot:ps:min} gives not only a necessary but also a sufficient condition for determination of a compact abelian group by its dense subgroup.

\begin{question}
\label{question:w-compact}
Does every dense projectively $w$-compact subgroup of a compact abelian group determine it?
\end{question}

We refer the reader to Remark \ref{totally:minimal:remark}(ii) for a partial positive answer to this question.

\section{Properties of \AV\ spaces and \sAV\ groups}
\label{sec:properties}

Our first remark shows that the \AVp\ is ``local''.

\begin{remark}
\label{AV:remark} 
{\em For every space $X$,  the inequalities $w(X)\le |X|$  and $\chi(X)\le |X|$ are equivalent.\/} 
\end{remark}

\begin{proposition}
\label{AV:proposition}
\begin{itemize}
  \item[(i)] Locally compact spaces have the \AVp.
  \item[(ii)] First countable (in particular, metric) spaces have the \AVp.
  \item[(iii)] The class of \AV\ spaces is closed under taking perfect preimages; that is, if $f:X\to Y$ is a perfect map from a space $X$ onto an \AV\ space $Y$, then $X$ has the \AVp.
  \item[(iv)]
If $w(X)$ is a strong limit cardinal, then $X$ has the \AVp.
\end{itemize}
\end{proposition}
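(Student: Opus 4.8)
The plan is to establish each of the four items by working directly from the definition of the \AVp\ (that is, $w(X)\le|X|$) together with the characterization recorded in Remark \ref{AV:remark}, namely that $w(X)\le|X|$ is equivalent to $\chi(X)\le|X|$. The local character of the property is what makes (i) and (ii) easy, so I would deal with those first; the substantive work is in (iii), and (iv) is a short counting argument.

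For (ii), if $X$ is first countable then $\chi(X)\le\omega\le|X|$ as soon as $X$ is infinite (and finite spaces trivially have the property since they are discrete with $w(X)=|X|<\omega$); by Remark \ref{AV:remark} this gives $w(X)\le|X|$. For (i), a locally compact space is, at each of its points, locally of the form of a compact space, and Arhangel'ski\u\i's theorem $w(C)=nw(C)\le|C|$ for compact $C$ gives a compact neighbourhood $C$ of any point $x$ with $\chi(x,C)\le w(C)\le|C|\le|X|$; since a neighbourhood base at $x$ in $C$ is also a neighbourhood base at $x$ in $X$ (an interior point), $\chi(X)\le|X|$ and Remark \ref{AV:remark} finishes it. (One must separately note that this argument needs $X$ infinite, the finite case being trivial; and one should take the supremum of $\chi(x,X)$ over all $x\in X$, each bounded by $|X|$, so that $\chi(X)\le|X|$.)

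For (iv), suppose $w(X)=\tau$ is a strong limit cardinal. Fix a base $\mathcal B$ for $X$ with $|\mathcal B|=\tau$. For each point $x\in X$, the family $\mathcal B_x=\{B\in\mathcal B: x\in B\}$ is a neighbourhood base at $x$, so $\chi(x,X)\le|\mathcal B_x|$. If $|X|<\tau$, I would derive a contradiction: since $\tau$ is strong limit, $2^{|X|}<\tau$, but the map $x\mapsto\{B\cap X:B\in\mathcal B_x\}$... more simply, the map sending each $B\in\mathcal B$ to the set of points it contains need not be injective, so instead I would argue via a more careful counting — each point $x$ is determined by which basic open sets contain it only up to the $T_0$ separation, and in a $T_1$ (Hausdorff) space distinct points are separated by some $B\in\mathcal B$, whence $x\mapsto\mathcal B_x$ is injective and $|X|\le 2^{|\mathcal B|}=2^\tau$, which is no contradiction. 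The correct route: if $|X|<\tau$ then, $\tau$ being strong limit, $2^{|X|}<\tau$; but $w(X)\le 2^{|X|}$ always holds for Hausdorff $X$ (a Hausdorff space of cardinality $\le|X|$ embeds in $\{0,1\}^{2^{|X|}}$... ) — rather, one knows $w(X)\le 2^{|X|}$ for every Hausdorff space, so $\tau=w(X)\le 2^{|X|}<\tau$, a contradiction. Hence $|X|\ge\tau=w(X)$, i.e.\ $X$ has the \AVp.

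The main obstacle is (iii), perfect preimages. Let $f:X\to Y$ be perfect onto an \AV\ space $Y$, so $w(Y)\le|Y|$. I would bound $w(X)$ in terms of $w(Y)$ and the weights of the fibres: each fibre $f^{-1}(y)$ is compact, hence has $w(f^{-1}(y))\le|f^{-1}(y)|$ by Arhangel'ski\u\i's theorem. The standard inequality for perfect maps gives $w(X)\le w(Y)\cdot\sup_{y\in Y}w(f^{-1}(y))$, and one also has $|X|=\sum_{y\in Y}|f^{-1}(y)|\ge\sup_y|f^{-1}(y)|$ and $|X|\ge|Y|$ (since $f$ is onto). Combining, $w(X)\le w(Y)\cdot\sup_y w(f^{-1}(y))\le|Y|\cdot\sup_y|f^{-1}(y)|\le|X|\cdot|X|=|X|$, so $X$ has the \AVp. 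The delicate point to verify carefully is the weight inequality $w(X)\le w(Y)\cdot\sup_y w(f^{-1}(y))$ for perfect $f$: one constructs a base for $X$ from a base $\mathcal B_Y$ of $Y$ and, for each fibre, a base of its compact subspace, using perfectness (closedness plus compact fibres) to ensure that tube-like sets $f^{-1}(V)\cap W$ suffice; the cardinality of the resulting base is at most $w(Y)\cdot\sup_y w(f^{-1}(y))\cdot\omega$, and the extra $\omega$ is absorbed unless $X$ is finite (again trivial). I would either cite this as a known fact about perfect maps or spend a paragraph on the tube-lemma-style construction; it is the only place where genuine topology, rather than cardinal bookkeeping, enters.
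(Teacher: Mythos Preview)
Your arguments for (i), (ii) and (iv) are correct, though the routes differ slightly from the paper's. For (i) the paper passes to the one-point compactification rather than bounding the character locally; for (iv), after your detours you correctly invoke $w(X)\le 2^{|X|}$ (true for any space, since every base is a family of subsets of $X$), whereas the paper uses the sharper $w(X)\le 2^{d(X)}$ to conclude $d(X)=w(X)\le|X|$ directly.

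The real problem is (iii). The inequality you rely on,
\[
w(X)\le w(Y)\cdot\sup_{y\in Y} w(f^{-1}(y))
\qquad\text{for perfect }f:X\to Y,
\]
is \emph{false}, so no ``tube-lemma-style construction'' will rescue it. Take $X$ to be the split interval (double arrow space) $[0,1]\times\{0,1\}$ with the order topology of the lexicographic order and let $f:X\to[0,1]$ be the first-coordinate projection. Then $X$ is compact Hausdorff, so $f$ is perfect; each fibre has two points, $w([0,1])=\omega$, yet $w(X)=\cont>\omega$. The obstacle in your sketch is exactly where you sense it: basic open sets in a fibre extend to open sets of $X$ only fibre-by-fibre, and there is no way to amalgamate these families using only $w(Y)$ (rather than $|Y|$) many choices.

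The paper's argument for (iii) is entirely different and avoids any fibrewise bookkeeping. One takes a continuous bijection $g:X\to Z$ onto a space $Z$ with $w(Z)\le nw(X)\le|X|$ (Arhangel'ski\u\i). The diagonal $h=(f,g):X\to Y\times Z$ is perfect because $f$ is (Engelking~3.7.9) and injective because $g$ is; hence $h$ is a closed embedding, giving
\[
w(X)=w(h(X))\le w(Y\times Z)=\max\{w(Y),w(Z)\}\le\max\{|Y|,|X|\}=|X|.
\]
The key idea you are missing is this condensation-plus-diagonal trick, which replaces the non-existent fibrewise weight bound by a single global map $g$ controlling $w$ through $nw$.
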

\begin{proof}
(i) Let $X$ be a locally compact space. If $X$ is finite, then $X$ has the \AVp. Suppose that $X$ is infinite. Since the one-point compactification  $Y$ of $X$ is compact, it has the \AVp, so $w(Y)\le |Y|$. 
Since $X$ is infinite and $Y\setminus X$ is a singleton, $|Y|=|X|$. Since $X$ is a subspace of $Y$, we get $w(X)\le w(Y)$. This proves that  $w(X)\le |X|$.

(ii) For finite spaces $X$, this follows from (i). If $X$ is infinite,  then the conclusion follows from Remark \ref{AV:remark}.

(iii) 
Since finite spaces have the \AVp\ by (i), we shall assume that $X$ is infinite. There exists a one-to-one continuous map $g:X\to Z$ onto a space $Z$ such that $w(Z)\le nw(X)\le |X|$ \cite{Arh0}. Let $h:X\to Y\times Z$ be the diagonal product 
of $f$ and $g$ defined by $h(x)=(f(x),g(x))$ for all $x\in X$. Since $f$ is a perfect  map, so is $h$ \cite[Theorem 3.7.9]{E}.
Since $g$ is one-to-one, $h$ is an injection. It follows that $X$ and $h(X)$ are homeomorphic, so 
\begin{equation}
\label{eq.1}
w(X)\le w(h(X))\le w(Y\times Z)=\max\{w(Y),w(Z)\}\le \max\{w(Y),|X|\}.
\end{equation}
Since $Y$ has the \AVp, $w(Y)\le |Y|=|f(X)|\le |X|$. Combining this with \eqref{eq.1}, we conclude that  $w(X)\le |X|$. Thus, $X$ has the \AVp.

(iv)
Since $d(X)\le w(X)$, $w(X)\le 2^{d(X)}$ and $w(X)$ is a strong limit cardinal,  $w(X)=d(X)\le|X|$.
\end{proof}

\begin{proposition}
If a topological group $G$ contains a dense subgroup $H$ with the \AVp, then $G$ itself  has the \AVp.
\end{proposition}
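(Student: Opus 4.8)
The plan is to reduce the statement to the density character rather than working with weight directly, using Remark~\ref{AV:remark} to switch between $w(X)\le|X|$ and $\chi(X)\le|X|$ only if needed. The cleanest route exploits the fact that a dense subgroup sees the whole group's weight: for any topological group $G$ one has $w(G)=w(H)$ whenever $H$ is dense in $G$, since a local base at $0$ in $H$ translates (via the group operations) to a local base at $0$ in $G$, and $w(G)=\chi(G)\cdot\psi$... more simply, $w(G)=\chi(G)+d(G)$ is false in general, so I would instead argue directly: a base $\mathcal B$ for $H$ of size $w(H)$ gives a base $\{U\cdot V : V\in\mathcal B\}$-type family for $G$ by taking closures, so $w(G)\le w(H)$; the reverse inequality $w(H)\le w(G)$ is automatic for subspaces. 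Hence $w(G)=w(H)$.

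With that in hand the argument is a one-liner. First I would record $w(G)=w(H)$ as above. Next, since $H$ has the \AVp, $w(H)\le|H|$. Finally $|H|\le|G|$ because $H$ is a subgroup (hence a subset) of $G$. Chaining these, $w(G)=w(H)\le|H|\le|G|$, which is exactly the \AVp\ for $G$. So $G$ has the \AVp.

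The only step requiring genuine care is the equality $w(G)=w(H)$ for a dense subgroup $H$ of a topological group $G$; everything else is trivial containment. I expect this to be the main (and essentially sole) obstacle, and it is a standard fact about topological groups: density plus homogeneity (translations are homeomorphisms) forces the weight, which for a topological group equals the character, to be inherited both ways. Concretely, if $\mathcal N$ is a neighbourhood base at the identity in $H$ of cardinality $\chi(H)=w(H)$, then for each $N\in\mathcal N$ pick an open $U_N$ in $G$ with $U_N\cap H=N$; since $H$ is dense, the closures $\overline{U_N}^{\,G}$ (equivalently, suitable open sets built from them) form a neighbourhood base at the identity in $G$, giving $\chi(G)\le\chi(H)$, hence $w(G)=\chi(G)\le\chi(H)=w(H)\le w(G)$. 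I would state this as a brief lemma or simply invoke it as well known, citing \cite{E}, and then the proposition follows immediately.

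\begin{proof}
Let $H$ be a dense subgroup of $G$ having the \AVp. It is well known that for a dense subgroup $H$ of a topological group $G$ one has $w(G)=w(H)$: the inequality $w(H)\le w(G)$ holds because $H$ is a subspace of $G$, while for the reverse inequality one uses that $G$ is a topological group (so $w(G)$ equals the character $\chi(G)$ at the identity) together with the density of $H$ to transfer a neighbourhood base at the identity of $H$ to one of $G$. Since $H$ has the \AVp, $w(H)\le |H|$. As $H\subseteq G$, we have $|H|\le|G|$. Combining these, $w(G)=w(H)\le|H|\le|G|$, so $G$ has the \AVp.
\end{proof}
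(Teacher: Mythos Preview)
Your overall strategy is sound, and the key intermediate fact $w(G)=w(H)$ for a dense subgroup $H$ of a (Hausdorff) topological group $G$ is correct and well known. However, your justification of it contains a genuine error: you write ``$G$ is a topological group (so $w(G)$ equals the character $\chi(G)$ at the identity)'', and in your plan you similarly set $\chi(H)=w(H)$. The equality $w(G)=\chi(G)$ is \emph{false} for topological groups in general---any uncountable discrete group has $\chi(G)=\omega$ but $w(G)=|G|>\omega$. What is true is the formula $w(G)=\chi(G)\cdot d(G)$, and this is what actually yields $w(G)\le w(H)$: one has $\chi(G)=\chi(H)$ by density and regularity, and $d(G)\le d(H)$ since any dense subset of $H$ is dense in $G$, whence $w(G)=\chi(G)\cdot d(G)\le\chi(H)\cdot d(H)\le w(H)$. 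With this correction your proof goes through.

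The paper's proof is slightly more economical: it uses only the equality $\chi(G)=\chi(H)$ (which you also implicitly need) together with $\chi(H)\le w(H)\le|H|\le|G|$, and then invokes Remark~\ref{AV:remark} to pass from $\chi(G)\le|G|$ back to $w(G)\le|G|$. This avoids the detour through $w(G)=w(H)$ and the formula $w=\chi\cdot d$, though the underlying idea---that density of $H$ in $G$ preserves the character---is the same in both arguments.
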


\begin{proof}
Since $H$ is dense in $G$, $\chi(H)=\chi(G)$. Since $H$ has the \AVp, $\chi(H)\le w(H)\le |H|$. Since $H$ is a subgroup of $G$, $|H|\le |G|$. This shows that $\chi(G)\le |G|$. Therefore, $G$ has the \AVp\ by Remark \ref{AV:remark}.
\end{proof}

This proposition does not hold for spaces since one may have $w(Y)<w(X)$ when $Y$ is  a dense subspace of $X$.

\begin{proposition}
\label{small:psc:are:sAV}
Every pseudocompact group $G$ such that $w(G)\le\cont$ is \sAV.
\end{proposition}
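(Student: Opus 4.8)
The plan is to verify directly that an arbitrary continuous homomorphic image $N$ of $G$ satisfies $w(N)\le|N|$. Fix a continuous surjective homomorphism $f\colon G\to N$ onto a topological group $N$. If $N$ is finite there is nothing to prove, so assume $N$ is infinite. Since pseudocompactness is preserved by continuous maps, $N$ is pseudocompact and hence precompact; let $\widetilde N$ denote its completion, which is compact, with $w(\widetilde N)=w(N)$. Likewise $\widetilde G$ is compact, with $w(\widetilde G)=w(G)\le\cont$.

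The first step is to bound $w(N)$. Being a continuous homomorphism of topological groups, $f$ is uniformly continuous and therefore extends to a continuous homomorphism $\widetilde f\colon\widetilde G\to\widetilde N$. The image $\widetilde f(\widetilde G)$ is a compact, hence closed, subgroup of $\widetilde N$ containing the dense subgroup $N=f(G)$, so $\widetilde f(\widetilde G)=\widetilde N$; that is, $\widetilde f$ is surjective. A continuous surjective homomorphism between compact groups is open (it factors as the canonical quotient map $\widetilde G\to\widetilde G/\ker\widetilde f$ followed by a continuous bijection from a compact group onto a Hausdorff group, which is a homeomorphism), and an open continuous surjection does not increase weight; hence $w(N)=w(\widetilde N)\le w(\widetilde G)=w(G)\le\cont$.

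The second step is to produce the lower bound $|N|\ge\cont$, after which $w(N)\le\cont\le|N|$ finishes the proof. Since $N$ is infinite, so is the compact group $\widetilde N$, and therefore $\widetilde N$ admits a continuous surjective homomorphism $q$ onto an infinite compact metrizable group $M$ (a standard structural fact: an infinite compact group has an infinite metrizable quotient group). By Theorem~\ref{CR:theorem}, $N$ is $G_\delta$-dense in $\widetilde N$, and since the $q$-preimage of a non-empty $G_\delta$-subset of $M$ is a non-empty $G_\delta$-subset of $\widetilde N$, the subgroup $q(N)$ is $G_\delta$-dense in $M$. As singletons are $G_\delta$-sets in the metrizable space $M$, this forces $q(N)=M$. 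Finally, the infinite compact metrizable group $M$ has no isolated points (by homogeneity), hence is a non-empty perfect compact metrizable space, so $|M|\ge\cont$; therefore $|N|\ge|M|\ge\cont$. This gives $w(N)\le|N|$, so $N$ has the \AVp, and since $f$ was arbitrary, $G$ is \sAV.

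I do not anticipate a genuine obstacle: the whole argument is a transfer of the problem to the compact completions, the two load-bearing facts being that surjective homomorphisms between compact groups are open (controlling $w(N)$) and that an infinite pseudocompact group has cardinality at least $\cont$ (controlling $|N|$), the latter being precisely what the second step above establishes via the Comfort--Ross characterization of pseudocompactness. The only point requiring a little care is making sure the metrizable quotient in the last step is a \emph{group}, so that it has no isolated points and hence cardinality $\cont$.
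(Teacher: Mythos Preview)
Your argument is correct and follows essentially the same route as the paper: bound $w(N)$ by extending $f$ to the compact completions, and bound $|N|$ from below by $\cont$ using that an infinite pseudocompact group has size at least $\cont$. The only difference is cosmetic: where the paper cites van Douwen for $|N|\ge\cont$, you supply a direct proof of that fact via a metrizable quotient of $\widetilde N$ and the Comfort--Ross $G_\delta$-density characterization.
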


\begin{proof}
Indeed, let $f:G\to H$ be a continuous surjective homomorphism of $G$ onto a topological group $H$.  Then $H$ is pseudocompact, as  a continuous image of the pseudocompact space $G$. If $H$ is finite, then $H$ has the \AVp\ by Proposition \ref{AV:proposition}(i). Assume now that $H$ is infinite. Then $|H|\ge\cont$ \cite[Proposition 1.3(a)]{vD}. To show that $H$ has the \AVp, it suffices to note that $w(H)\le\cont$. Indeed, let $\widehat{f}:\widehat{G}\to \widehat{H}$ be the extension of $f$ over the completion $\widehat{G}$ of $G$. Since $\widehat{G}$ is compact and $\widehat{f}$ is surjective,  $w(H)=w(\widehat{H})\le w(\widehat{G})=w(G)\le\cont$.
\end{proof}

Item (i) of our next proposition shows that the restriction on weight  in Proposition \ref{small:psc:are:sAV} is the best possible, 
while
item (ii) of 
Proposition \ref{ex1}
shows that even groups ``arbitrarily close'' to compact  need not have 
the \AVp.
(Compare this with Proposition~\ref{AV:proposition}(i).)

\begin{proposition}
\label{ex1}
\begin{itemize}
\item[(i)]
Every compact group $G$ with $w(G)=\cont^+$ has  a dense countably compact subgroup  without the \AVp.
\item[(ii)]
For every infinite cardinal $\kappa$, each compact group $G$ of weight $\tau=2^{2^{2^\kappa}}$
has a dense $\kappa$-bounded subgroup 
without the
\AVp.
\end{itemize}
\end{proposition}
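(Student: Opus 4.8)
The plan is to reduce the statement to constructing a \emph{small} dense subgroup carrying the prescribed compactness property, and then to obtain such a subgroup by a transfinite closing-off argument.

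First I would record two reductions. A dense subgroup $D$ of a topological group $G$ always satisfies $w(D)=w(G)$: indeed $w=\chi\cdot d$ for infinite topological groups, and passing from $G$ to a dense subgroup changes neither the character (by homogeneity) nor the density (a dense subset of $D$ is dense in $G$). Hence a dense subgroup $D$ of $G$ fails to have the \AVp\ precisely when $|D|<w(G)$, and it suffices to construct, in case (i), a dense countably compact subgroup $D$ with $|D|\le\cont$ (so that $|D|<\cont^+=w(G)$), and in case (ii), a dense $\kappa$-bounded subgroup $D$ with $|D|\le 2^{2^\kappa}$ (so that $|D|<2^{2^{2^\kappa}}=w(G)$). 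Second, by the Hewitt--Marczewski--Pondiczery theorem every Tychonoff space of weight at most $2^\lambda$ has density at most $\lambda$; applied to $G$ this produces a dense subgroup $D_0$ of cardinality $\le\cont$ in case (i) (since $\cont^+\le 2^\cont$) and of cardinality $\le 2^{2^\kappa}$ in case (ii) (since $w(G)=2^\mu$ with $\mu=2^{2^\kappa}$), which serves as the starting point of the recursion.

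The construction is the same scheme in both parts; only the bookkeeping cardinals differ. In case (i), given a subgroup $D_\alpha$ of $G$ with $|D_\alpha|\le\cont$, I would use compactness of $G$ to pick, for each countably infinite $C\subseteq D_\alpha$, a point $p_C\in G$ every neighbourhood of which meets $C$ in an infinite set, and set $D_{\alpha+1}=\langle D_\alpha\cup\{p_C:C\in[D_\alpha]^{\le\omega}\}\rangle$; since $\cont^\omega=\cont$ and a subgroup generated by $\le\cont$ elements has cardinality $\le\cont$, we get $|D_{\alpha+1}|\le\cont$. Taking unions at limit stages and letting $D=\bigcup_{\alpha<\omega_1}D_\alpha$ yields a dense subgroup with $|D|\le\cont\cdot\omega_1=\cont$, and $D$ is countably compact because every countably infinite $C\subseteq D$ lies in some $D_\alpha$ (as $\mathrm{cf}(\omega_1)>\omega$), so the point $p_C\in D_{\alpha+1}\subseteq D$ remains an accumulation point of $C$ inside $D$. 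In case (ii), given a subgroup $D_\alpha$ with $|D_\alpha|\le 2^{2^\kappa}$, I would set $D_{\alpha+1}=\big\langle D_\alpha\cup\bigcup\{\,\overline{A}^{\,G}:A\in[D_\alpha]^{\le\kappa}\,\}\big\rangle$; each $\overline{A}^{\,G}$ is a compact subset of $G$ with $|\overline{A}^{\,G}|\le 2^{2^{|A|}}\le 2^{2^\kappa}$ by the Hausdorff cardinal inequality $|X|\le 2^{2^{d(X)}}$ for Hausdorff spaces, there are at most $|D_\alpha|^\kappa\le(2^{2^\kappa})^\kappa=2^{2^\kappa}$ such sets $A$, and a subgroup generated by $\le 2^{2^\kappa}$ elements has cardinality $\le 2^{2^\kappa}$, so $|D_{\alpha+1}|\le 2^{2^\kappa}$. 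Running this for $\kappa^+$ steps and letting $D=\bigcup_{\alpha<\kappa^+}D_\alpha$ gives a dense subgroup with $|D|\le 2^{2^\kappa}\cdot\kappa^+=2^{2^\kappa}$, and $D$ is $\kappa$-bounded because every $A\in[D]^{\le\kappa}$ lies in some $D_\alpha$ (as $\mathrm{cf}(\kappa^+)>\kappa$), so $\overline{A}^{\,D}=\overline{A}^{\,G}$ is compact.

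The main obstacle is not any single computation but the calibration of the length of the recursion. It must have cofinality strictly greater than $\omega$ in case (i), and strictly greater than $\kappa$ in case (ii), so that every countable (resp.\ $\le\kappa$-sized) subset of the final union $D$ is already trapped in some $D_\alpha$; this is exactly what makes the accumulation point (resp.\ the compact closure) produced at stage $\alpha+1$ survive into $D$ and witness countable compactness (resp.\ $\kappa$-boundedness). At the same time the length must be kept small — $\omega_1$ and $\kappa^+$ respectively — so that the cardinality bounds $|D|\le\cont$ and $|D|\le 2^{2^\kappa}$ persist; fortunately the two requirements are compatible. Everything else — that forming the generated subgroup and taking unions at limit stages preserve the cardinality estimates, and that accumulation points and closures computed in $D_{\alpha+1}$ remain such in the larger group $D$ — is immediate.
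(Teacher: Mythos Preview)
Your argument is correct, and for part (i) it is exactly the paper's proof: the paper cites Itzkowitz to obtain a dense subgroup of size $\cont$ and then invokes ``the standard closing-off argument'' for the countably compact enlargement, which is precisely the $\omega_1$-length recursion you spell out.

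For part (ii) your approach works, but the paper takes a shorter route. Instead of iterating $\kappa^+$ times, it simply takes the $\kappa$-closure of the initial small dense subgroup $H$ in one step: $D=\bigcup\{\overline{A}:A\in[H]^{\le\kappa}\}$. The point is that this $D$ is already a subgroup (since $\overline{A}\cdot\overline{B}^{-1}\subseteq\overline{AB^{-1}}$ by continuity of the group operations, and $AB^{-1}\in[H]^{\le\kappa}$) and already $\kappa$-bounded (if $C\in[D]^{\le\kappa}$, collect witnesses $A_c$ for each $c\in C$ and observe $\overline{C}\subseteq\overline{\bigcup_c A_c}\subseteq D$). Your transfinite recursion is thus unnecessary here: the first step already stabilises. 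What you gain from the recursive formulation is that you never need to verify these two facts about the $\kappa$-closure, since you force the subgroup property at each successor stage and obtain $\kappa$-boundedness from cofinality alone; what the paper gains is brevity.
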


\begin{proof}
(i) Since $\cont^+\le 2^\cont$, applying 
\cite[Theorem 2.7]{I} we can choose a dense subgroup $H$ of $G$
such that $|H|=\cont$. By the standard closing-off argument, we can find a countably compact subgroup $D$ of $G$ such that $H\subseteq D$ and $|D|\le \cont$.   Since $H$ is dense in $G$, so is $D$. Since $|D|=\cont<\cont^+=w(G)=w(D)$,  $D$ does not have the \AVp.

(ii) 
By 
\cite[Theorem 2.7]{I},
$G$ contains a dense subgroup $H$ of size $2^{2^\kappa}$.  Let $D$ be the $\kappa$-closure of $H$ in $G$; that is, $D = \bigcup \left\{\overline{A}: A \in [H]^{\leq \kappa }\right\}$, where  $\overline{A}$ denotes the closure of $A$ in $G$. Clearly, $D$ is a subgroup of $G$ containing $H$, so $D$ is dense in $G$. Since $\left|[H]^{\leq \kappa} \right|\leq 2^{2^\kappa}$ and $|\overline{A}|\leq 2^{2^\kappa}$ for every $A \in [H]^{\leq \kappa}$, we conclude that $|D|\le 2^{2^\kappa} <2^{2^{2^\kappa}}=w(D)$. Therefore, $D$ does not have the \AVp.  
\end{proof}

\section{Metrizability of compact groups via conditions on their dense subgroups}
\label{metrization:section}

Our first theorem demonstrates that the weakest condition in Diagram 1 is not sufficient for getting the metrizability of a compact group $G$ even when this condition is imposed 
on all dense subgroups of $G$.

\begin{theorem}
\label{strong:limit}
Every dense subgroup of a compact  group $G$ has the \AVp\  if and only if $w(G)$ is a strong limit cardinal.
\end{theorem}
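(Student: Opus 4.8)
I will prove the two implications separately.

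For the easy direction, suppose $w(G)$ is a strong limit cardinal. If $D$ is any dense subgroup of $G$, then density gives $w(D)=w(G)$ (for topological groups, a dense subgroup has the same weight as the ambient group, since the character coincides and weight equals character for topological groups), so $w(D)$ is a strong limit cardinal as well. By Proposition~\ref{AV:proposition}(iv), $D$ has the \AVp. This is immediate once the weight of a dense subgroup is pinned down.

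For the converse, I argue by contraposition: assume $w(G)=\kappa$ is \emph{not} a strong limit cardinal and produce a dense subgroup without the \AVp. Non-strong-limit means there is a cardinal $\sigma<\kappa$ with $2^\sigma\ge\kappa$. The plan is to find a dense subgroup $D$ of $G$ with $|D|<w(D)=\kappa$; then $D$ fails the \AVp\ by definition. The tool is Itzkowitz-type embedding/density results of the kind cited as \cite[Theorem 2.7]{I} and used in Proposition~\ref{ex1}: a compact group of weight $\kappa\le 2^\sigma$ contains a dense subgroup of cardinality $\le 2^{<\kappa}$, or more carefully, one wants a dense subgroup of size strictly smaller than $\kappa$. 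Concretely, $G$ (being compact of weight $\kappa$) maps onto, or contains a closed copy related to, a power of a metrizable group, and one can build a dense subgroup generated by $\sigma$-many suitably chosen elements together with enough elements to hit all basic open sets; since the group generated by a set $S$ has cardinality $\le|S|+\omega$, and one needs only $\kappa$-many... — here care is required, so let me instead follow the pattern of Proposition~\ref{ex1} directly: by the cited density theorem, $G$ has a dense subgroup $H$ with $|H|=2^\sigma$ if $2^\sigma<\kappa$ would be the wrong inequality, so rather $|H|$ can be taken as small as $d(G)$-related bounds allow. The clean statement to invoke is that every compact group of weight $\kappa$ has a dense subgroup of cardinality $\le 2^{\sigma}$ whenever $\kappa \le 2^{2^\sigma}$ is weakened appropriately; since $\kappa\le 2^\sigma$ here, one gets a dense subgroup $H$ with $|H|\le 2^{\sigma}$... which need not be $<\kappa$. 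So the correct move is: choose $\sigma<\kappa$ \emph{minimal} with $2^\sigma\ge\kappa$; then for all $\tau<\sigma$ we have $2^\tau<\kappa$, hence $2^{<\sigma}<\kappa$, and the density character estimates give a dense subgroup of size $2^{<\sigma}<\kappa=w(G)=w(D)$.

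**Main obstacle.** The delicate point is the converse: extracting a dense subgroup of cardinality strictly below $w(G)$ from a compact group whose weight is not a strong limit. The inequality bookkeeping with cardinal exponentiation (choosing $\sigma$ minimal with $2^\sigma \ge \kappa$, then controlling $2^{<\sigma}$ and $|[H]^{\le\sigma}|$) must be done carefully, and one must cite the correct embedding/density theorem (the analogue of \cite[Theorem 2.7]{I}) that guarantees a dense subgroup of the required small size inside an arbitrary compact group of weight $\kappa\le 2^\sigma$. Once such a dense subgroup $D$ with $|D|<\kappa$ is in hand, $w(D)=w(G)=\kappa>|D|$ witnesses the failure of the \AVp\ and the proof is complete.
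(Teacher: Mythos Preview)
Your outline matches the paper's: the ``if'' direction uses $w(D)=w(G)$ together with Proposition~\ref{AV:proposition}(iv), exactly as the paper does, and for the converse you correctly aim to produce a dense subgroup $D$ with $|D|<w(G)$.

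The gap is in your cardinal arithmetic for the converse. Having chosen $\sigma<\kappa$ minimal with $2^\sigma\ge\kappa$, you assert ``hence $2^{<\sigma}<\kappa$'' and look for a dense subgroup of size $2^{<\sigma}$. The inequality $2^{<\sigma}<\kappa$ does not follow in general: in a model (consistent by Easton forcing) where $2^{\aleph_n}=\aleph_{\omega+n+1}$ for all $n<\omega$, take $\kappa=\aleph_{\omega\cdot 2}$; then $\sigma=\log\kappa=\aleph_\omega$ but $2^{<\sigma}=\sup_{n}\aleph_{\omega+n+1}=\aleph_{\omega\cdot 2}=\kappa$. More importantly, the detour through $2^{<\sigma}$ is unnecessary. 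Itzkowitz's theorem (the result you cite as \cite[Theorem~2.7]{I}) yields $d(G)\le\sigma$ whenever $w(G)\le 2^\sigma$; hence $d(G)\le\sigma<\kappa$ directly, and any dense subgroup of cardinality $d(G)$ already satisfies $|D|=d(G)<w(G)=w(D)$ and fails the \AVp. That is precisely the paper's one-line argument for this direction. Your ``Main obstacle'' paragraph correctly flags the bookkeeping as the issue, but the resolution you propose is the wrong one; the fix is simply to use $\sigma$ (equivalently, $d(G)$) rather than $2^{<\sigma}$ as the size of the dense subgroup.
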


Our second theorem shows that the projective version of the weakest condition in Diagram 1 imposed on {\em all\/} dense subgroups of a compact group $G$  suffices to obtain its metrizability. 

\begin{theorem}
\label{nonAV:inside}
Every dense subgroup of a compact group $G$ is \sAV\ if and only if $G$ is metrizable.
\end{theorem}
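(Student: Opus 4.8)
The plan is to prove both implications of Theorem \ref{nonAV:inside}. The easy direction is that metrizability of $G$ implies that every dense subgroup is \sAV. Indeed, if $G$ is metrizable then so is every subgroup $D$, hence every continuous homomorphic image $f(D)$ of $D$ is second countable, so $w(f(D))\le\omega\le|f(D)|$ whenever $f(D)$ is infinite, and the finite case is trivial; thus $f(D)$ has the \AVp. (Alternatively one may simply invoke Proposition \ref{AV:proposition}(ii) after noting that images of metrizable groups under continuous homomorphisms are again metrizable — but the direct argument is cleaner.)

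For the substantive direction I would argue the contrapositive: if $G$ is a non-metrizable compact group, I will exhibit a single dense subgroup $D$ of $G$ that fails to be \sAV, i.e.\ admits a continuous homomorphic image without the \AVp. First I would reduce to the case $w(G)=\omega_1$: since $G$ is compact non-metrizable, $w(G)\ge\omega_1$, and a standard argument produces a continuous surjective homomorphism $q:G\to G_1$ onto a compact group $G_1$ with $w(G_1)=\omega_1$ (take the quotient by the intersection of a suitable countable family from a local base witnessing a subgroup of weight $\omega_1$; in the abelian case this is immediate from Pontryagin duality by picking $\omega_1$ characters, and in general one uses that compact groups have small weight quotients realizing every infinite cardinal $\le w(G)$). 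If I can find a dense subgroup $D_1$ of $G_1$ that is not \sAV, then $D=q^{-1}(D_1)$ is dense in $G$ and the composition $q\restriction_D:D\to D_1$ followed by the bad quotient of $D_1$ shows $D$ is not \sAV either. So it suffices to handle $w(G)=\omega_1$.

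Now with $w(G)=\omega_1$, the idea is to invoke the machinery advertised in the abstract and in Section \ref{killing:uncountable:compacta}: there is a $G_\delta$-dense subgroup $D$ of $G$ containing no uncountable compact subset. Then $D$ is dense in $G$, and I claim $D$ itself fails the \AVp\ — indeed $|D|=|G|=\cont$ while, having weight $w(D)=w(G)=\omega_1$... this is not yet a contradiction unless $\omega_1=\cont$ fails, so I must be more careful: what I actually want is that $D$ is not \emph{$w$-compact}, and then I need to convert failure of $w$-compactness into failure of being \sAV. Here is the clean route. Since $D$ has no uncountable compact subset, every compact subset of $D$ is metrizable, so $D$ is not $w$-compact (as $w(D)=\omega_1$). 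To get a homomorphic image of $D$ without the \AVp, note that this property of having no uncountable compact subsets passes to continuous homomorphic images onto groups of weight $\omega_1$; and by Corollary \ref{tot:ps:min} combined with Theorem \ref{CHMRT} (or rather by the contrapositive of what we are proving together with Theorem \ref{size:of:determined:subgroup}) a dense subgroup of a compact group of weight $\omega_1$ which is not $w$-compact does not determine its completion. The sharpest formulation I would use is Theorem \ref{size:of:determined:subgroup}: it is precisely failure of $w$-compactness that obstructs determination, and by Corollary \ref{tot:ps:min} being \sAV\ entails being projectively $w$-compact once one knows the image determines its completion. So the real content I would extract from Section \ref{killing:uncountable:compacta} is: \emph{every compact group of weight $\omega_1$ contains a dense subgroup $D$ that is not projectively $w$-compact, hence (by a direct check, or by the failure of the \AVp\ on an appropriate $\omega_1$-weight quotient image after a suitable diagonal construction) not \sAV.}

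The main obstacle, and the point where I expect to simply cite Section \ref{killing:uncountable:compacta}, is the construction of the dense (in fact $G_\delta$-dense) subgroup $D$ of a weight-$\omega_1$ compact group $G$ with no uncountable compact subsets — this is the delicate ZFC construction the abstract flags as the technical heart of the paper, and the reason the theorem holds in ZFC rather than only under CH. Granting that construction, the deduction that $D$ is not \sAV\ is the remaining subtlety: one must exhibit a continuous homomorphic \emph{image} of $D$ violating $w(\cdot)\le|\cdot|$. The way to see this is that the image $q(D)$ of $D$ under a weight-$\omega_1$ quotient $q$ of $G$ again has no uncountable compact subset (compact subsets of $q(D)$ lift, modulo the metrizable kernel, to compact subsets of $D$, and a compact metrizable-by-compact-metrizable extension is compact metrizable), and then the equality $w(q(D))=\omega_1$ together with $|q(D)|$ possibly equal to $\omega_1$ shows the \AVp\ can fail only when we also arrange $|q(D)|<\omega_1$, which cannot happen; so in fact I would instead produce the failure of the \AVp\ not for $D$ directly but by a further quotient engineered so that the image has cardinality $\omega_1$ but weight $\cont$ — and this is exactly where one needs the finer output of the Section \ref{killing:uncountable:compacta} machinery (freeness in a variety, allowing control of both weight and cardinality of images). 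Thus the overall structure is: reduce to $w(G)=\omega_1$; apply the Section \ref{killing:uncountable:compacta} construction to get the bad dense subgroup; observe it is not \sAV; conclude by contraposition.
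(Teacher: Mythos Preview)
Your reduction to $w(G)=\omega_1$ via a quotient $q:G\to H$ is correct and matches the paper, but from there the proposal goes astray. The paper's argument at this point is a one-liner: since $\omega_1$ is not a strong limit cardinal, Theorem~\ref{strong:limit} supplies a dense subgroup $D$ of $H$ without the \AVp\ --- concretely, any countable dense subgroup of $H$ works, since then $|D|=\omega<\omega_1=w(D)$. Pulling back via Fact~\ref{pull:back}, $D_1=q^{-1}(D)$ is a dense subgroup of $G$ that is not \sAV. No machinery from Section~\ref{killing:uncountable:compacta} is used or needed here.

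Your plan to invoke Section~\ref{killing:uncountable:compacta} cannot work, for a structural reason you have overlooked: that construction produces a $G_\delta$-dense (hence pseudocompact) subgroup of weight $\omega_1\le\cont$, and by Proposition~\ref{small:psc:are:sAV} every such group \emph{is} \sAV. This is precisely the content of the implication (iii)$\to$(i) in Theorem~\ref{cc:are:sAV}. So the Section~\ref{killing:uncountable:compacta} machinery outputs groups that are \sAV\ but not projectively $w$-compact --- exactly the wrong profile for Theorem~\ref{nonAV:inside}, and exactly the right one for Theorem~\ref{ZFC}, which is where that machinery is actually deployed. You have conflated the two theorems. Your attempted repair (a further quotient with ``cardinality $\omega_1$ but weight $\cont$'') is also impossible: a continuous homomorphic image of a precompact group of weight $\omega_1$ has completion of weight at most $\omega_1$, hence itself has weight at most $\omega_1$. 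Finally, a minor point on the easy direction: the claim that every continuous homomorphic image of a second-countable group is second countable is not immediate (continuous maps can raise weight); the clean fix is to extend $f$ to the compact completions, so that the completion of $f(D)$ is a continuous image of the metrizable compact group $G$, hence metrizable, and then apply Proposition~\ref{AV:proposition}(ii).
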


Since a dense determining subgroup of a compact abelian group is \sAV\  (see Diagram~1),  in the abelian case 
the ``only if'' part of
this result strengthens Theorem \ref{CHMRT} by offering the same conclusion under a much weaker assumption.

For a cardinal $\sigma$, the minimum cardinality of a pseudocompact group of weight $\sigma$ is denoted by $m(\sigma)$  \cite{CoRo}.

The next theorem is a counterpart of Theorem \ref{strong:limit}  for $G_\delta$-dense subgroups.
 
\begin{theorem}\label{nonAV:Gdelta}
Every $G_\delta$-dense subgroup of a compact group $G$  has the \AVp\ if and only if $m(w(G))\ge w(G)$. 
\end{theorem}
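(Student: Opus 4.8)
The plan is to prove both implications by relating the existence of ``small'' $G_\delta$-dense subgroups failing the \AVp\ to the size of pseudocompact groups of the relevant weight. Throughout write $\tau=w(G)$ and recall that, by Theorem~\ref{CR:theorem}, the $G_\delta$-dense subgroups of the compact group $G$ are precisely its dense pseudocompact subgroups.

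First I would settle the ``if'' direction: assume $m(\tau)\ge\tau$ and let $D$ be a $G_\delta$-dense subgroup of $G$. By Theorem~\ref{CR:theorem}, $D$ is pseudocompact, and since $D$ is dense in $G$ we have $w(D)=w(G)=\tau$. Thus $D$ is a pseudocompact group of weight $\tau$, so by the definition of $m(\tau)$ we get $|D|\ge m(\tau)\ge\tau=w(D)$; that is, $w(D)\le|D|$, which is exactly the \AVp\ for $D$. (The only point requiring a word is that $D$, being a dense subgroup, genuinely has weight $\tau$ and not less; this is standard for dense subgroups of topological groups.)

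For the ``only if'' direction I would argue by contraposition: suppose $m(\tau)<\tau$ and produce a $G_\delta$-dense subgroup of $G$ without the \AVp. By the definition of $m(\tau)$ there is a pseudocompact group $K$ with $w(K)=\tau$ and $|K|=m(\tau)<\tau$. The natural idea is to transport a dense pseudocompact subgroup of small size into $G$ itself. Since $G$ is compact of weight $\tau$, one can embed $G$ into a product $\prod_{i\in I}G_i$ of compact metrizable groups with $|I|=\tau$, or else use a result producing inside $G$ a dense pseudocompact subgroup of cardinality at most $m(\tau)$ — this is exactly where $m(\tau)$ was designed to be used (cf.\ the Cater--Comfort--Ross circle of ideas around \cite{CoRo}). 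Concretely I would invoke (or reprove) the fact that every compact group $G$ contains a dense pseudocompact subgroup $D$ with $|D|\le m(w(G))$: build $D$ by a closing-off argument, at each stage adding, for each basic $G_\delta$-set, one point of $G$ hitting it and one point witnessing that a chosen continuous real-valued function is unbounded if it tries to be, keeping the cardinality controlled by $m(\tau)$ via the comparison group $K$. Then $D$ is $G_\delta$-dense (being dense pseudocompact, by Theorem~\ref{CR:theorem}), it is dense so $w(D)=\tau$, and $|D|\le m(\tau)<\tau=w(D)$, so $D$ fails the \AVp.

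The main obstacle I anticipate is the construction of a dense $G_\delta$-dense subgroup of prescribed small cardinality $m(\tau)$ inside an arbitrary compact group $G$ of weight $\tau$ — i.e.\ getting the cardinality bound $|D|\le m(\tau)$ rather than merely $|D|\le\tau^{\aleph_0}$ or some cruder estimate. The clean route is to reduce to products of metrizable (hence separable second countable, size $\le\cont$) compact groups and paste in a copy of a witnessing pseudocompact group of size $m(\tau)$ coordinatewise; the delicate point is ensuring the resulting subgroup remains a \emph{subgroup} of $G$ and remains $G_\delta$-dense after the pasting, which requires choosing the embedding $G\hookrightarrow\prod_{i\in I}G_i$ and the pasted group compatibly. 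I would expect this to go through by the standard machinery for pseudocompact group topologies (closing-off / Frol\'\i k-type arguments), with $m(\tau)$ entering precisely as the bookkeeping cardinal that keeps the transfinite recursion from overshooting $\tau$.
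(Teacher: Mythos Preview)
Your proposal is correct and follows the same route as the paper. The ``if'' direction is exactly as in the paper's proof. For the ``only if'' direction, the obstacle you anticipate is not a real one: the existence in any compact group $G$ of a $G_\delta$-dense subgroup of cardinality exactly $m(w(G))$ is a known theorem of Comfort and Robertson \cite{CoRo} (you correctly point to this reference, modulo a slip in the authors' names), so the paper simply cites it rather than reconstructing it via closing-off or product embeddings.
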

 
Our next result is  the counterpart of Theorem \ref{nonAV:inside} with ``dense'' replaced by ``$G_\delta$-dense''.

\begin{theorem}
\label{cc:are:sAV}
For a compact group $G$, the following conditions are equivalent:
\begin{itemize}
  \item[(i)] every $G_\delta$-dense (equivalently, each dense pseudocompact) subgroup of $G$ is \sAV;
  \item[(ii)] all dense countably compact subgroups of $G$ are \sAV;
  \item[(iii)] $w(G)\le\cont$.
\end{itemize}
\end{theorem}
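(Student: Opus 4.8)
The plan is to prove the cycle of implications (iii) $\Rightarrow$ (i) $\Rightarrow$ (ii) $\Rightarrow$ (iii). The implication (iii) $\Rightarrow$ (i) is the content of Proposition \ref{small:psc:are:sAV}: if $w(G) \le \cont$ then every subgroup $D$ of $G$ has $w(D) \le w(G) \le \cont$, and if in addition $D$ is pseudocompact (equivalently, by Theorem \ref{CR:theorem}, $G_\delta$-dense), then $D$ is \sAV. The equivalence of ``$G_\delta$-dense'' and ``dense pseudocompact'' in (i) is precisely Theorem \ref{CR:theorem}, so nothing extra is needed there. The implication (i) $\Rightarrow$ (ii) is immediate once one recalls that every dense countably compact subgroup of a compact group is pseudocompact (countably compact $\to$ pseudocompact, as noted in the preamble), hence $G_\delta$-dense by Theorem \ref{CR:theorem}; thus (ii) is a special case of (i).

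The substantive direction is (ii) $\Rightarrow$ (iii), which I would prove by contraposition: assuming $w(G) > \cont$, I must exhibit a dense countably compact subgroup of $G$ that is not \sAV. The strategy is to reduce to a quotient of manageable size and then invoke Proposition \ref{ex1}(i). Since $w(G) > \cont$, i.e.\ $w(G) \ge \cont^+$, the compact group $G$ admits a continuous surjective homomorphism $q \colon G \to K$ onto a compact group $K$ with $w(K) = \cont^+$ (take a continuous character-based projection onto a suitable subproduct, or use the structure theory of compact groups to find a quotient of weight exactly $\cont^+$). By Proposition \ref{ex1}(i), $K$ has a dense countably compact subgroup $E$ without the \AVp. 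The point is now to pull $E$ back through $q$ in a way that preserves both ``dense countably compact'' and ``witnesses failure of \sAV.''

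The main obstacle is exactly this pullback step: $q^{-1}(E)$ need not be countably compact (the kernel of $q$ can be a large compact group, and $q^{-1}(E)$ will typically be all of $G$-large in the fibre direction but only countably compact in the base direction). The remedy is to build, by a transfinite closing-off argument inside $G$, a subgroup $D$ with $q(D) = E$, with $D$ countably compact and dense in $G$, and of size controlled so that $w(q(D)) = w(E) = \cont^+$ while $|q(D)| = \cont$; then $q\restriction_D \colon D \to E$ is a continuous surjective homomorphism onto a group without the \AVp, so $D$ is not \sAV. Concretely: enumerate a base of $G_\delta$-sets (or a dense set of points together with the relevant convergence requirements) and, alternately, (a) add points of $G$ to meet density/$G_\delta$-density demands, (b) add limit points of countable subsets to secure countable compactness, and (c) at each stage keep the image under $q$ inside $E$ by using that $E$ is itself countably compact and $G_\delta$-dense in $K$ so that the required limit points and meeting-points can be chosen with prescribed $q$-image in $E$; the kernel of $q$ being compact of weight $\le w(G)$ lets us absorb the fibre directions while keeping $|D| \le |E| \cdot \cont \cdot \omega = \cont$, hence $|q(D)| \le \cont < \cont^+ = w(E) = w(q(D))$. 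I would carry out steps (a)–(c) as a single recursion of length $\omega_1$ (or of length $\mathrm{cf}(\cont)$, being careful), stabilizing at a subgroup $D$ of $G$ with all the desired properties; the verification that $D$ is countably compact uses that any countable subset of $D$ appears at some stage $<\omega_1$ and has all its accumulation points added thereafter, and the verification that $D$ is dense (indeed $G_\delta$-dense) is built into the bookkeeping. This completes the contrapositive and hence the cycle, establishing the equivalence of (i), (ii) and (iii).
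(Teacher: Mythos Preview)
Your argument for (iii)$\Rightarrow$(i) and (i)$\Rightarrow$(ii) is correct and matches the paper. For (ii)$\Rightarrow$(iii) you correctly identify the key steps: pass to a quotient $q\colon G\to K$ with $w(K)=\cont^+$ (this is Fact~\ref{cont:images:of:smaller:weight} in the paper), and invoke Proposition~\ref{ex1}(i) to obtain a dense countably compact subgroup $E\subseteq K$ without the \AVp.

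The gap is in what follows. Your claim that ``$q^{-1}(E)$ need not be countably compact'' is simply false, and the elaborate closing-off construction you propose is unnecessary. A continuous surjective homomorphism between compact groups is a perfect map (it is closed, and each fibre is a coset of the compact kernel). The restriction of a perfect map to a full preimage is again perfect, and countable compactness is preserved under preimages by perfect maps. Hence $D_1=q^{-1}(E)$ is automatically countably compact; it is dense in $G$ because $E$ is dense in $K$ and $\ker q\subseteq D_1$; and $q(D_1)=E$ fails the \AVp, so $D_1$ is not \sAV. This is exactly Fact~\ref{pull:back} in the paper, and it finishes the proof in one line.

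Your alternative construction, besides being superfluous, is also underspecified: you want $D$ dense in $G$ and countably compact with $q(D)=E$, but a dense countably compact subgroup of $G$ must in particular be $G_\delta$-dense, and there is no reason a recursion of length $\omega_1$ (or $\mathrm{cf}(\cont)$) should suffice to achieve $G_\delta$-density when $w(G)$ may be arbitrarily large. So the detour is not only avoidable but does not obviously work as sketched.
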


This theorem shows that having all $G_\delta$-dense subgroups of 
a compact group 
$G$ \sAV\ is not sufficient for obtaining metrizability of 
$G$. Our next theorem shows
that strengthening ``\sAV'' to ``projectively $w$-compact'' yields metrizability of $G$ in case when $G$ is either connected or abelian. 

\begin{theorem}\label{ZFC}
Let $G$ be a compact group that is either abelian or connected.
If all $G_\delta$-dense (equivalently, all dense pseudocompact) subgroups of $G$ are projectively $w$-compact, then $G$ is metrizable. 
\end{theorem}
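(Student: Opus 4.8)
The plan is to argue by contraposition: assuming $G$ is non-metrizable, I would produce a $G_\delta$-dense subgroup $D$ of $G$ that is \emph{not} projectively $w$-compact, i.e.\ admits a continuous homomorphic image which contains no compact subset of full weight. Since $G$ is non-metrizable and compact, $w(G)\ge\omega_1$, so by passing to a continuous quotient one can first reduce to the case $w(G)=\omega_1$: if $G$ is abelian this is a routine application of the structure theory (project onto a quotient of weight exactly $\omega_1$), and if $G$ is connected one uses that a connected compact group maps onto $\T^{w(G)}$ (via the structure of connected compact groups as quotients of products of $\T$'s and compact connected Lie groups), hence onto $\T^{\omega_1}$; in either case it suffices to handle compact groups of weight $\omega_1$ of the two prescribed types. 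Note that a continuous homomorphic preimage of a non-projectively-$w$-compact group need not itself be non-projectively-$w$-compact unless we are careful, so I would instead construct $D$ \emph{inside} the weight-$\omega_1$ quotient $G/K$ and then pull it back along the quotient map together with $K$ (using $G_\delta$-density of $D$ in $G/K$ plus $K$ being a $G_\delta$-subgroup is delicate; more cleanly, I would simply prove the weight-$\omega_1$ case and note that the class of compact groups satisfying the hypothesis is closed under continuous quotients, so if the weight-$\omega_1$ quotient failed the hypothesis, so does $G$).

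The heart of the argument is the machinery announced for Section \ref{killing:uncountable:compacta}: for a compact group $G$ with $w(G)=\omega_1$ that is abelian or connected, there is (in ZFC) a $G_\delta$-dense subgroup $D$ of $G$ having \emph{no uncountable compact subsets}. Granting this, such a $D$ cannot be projectively $w$-compact: taking $f=\mathrm{id}_D$, the continuous homomorphic image $D$ of $D$ would need to contain a compact $X$ with $w(X)=w(D)$; but $D$ is dense in $G$, so $w(D)\ge w(G)=\omega_1$ (a dense subgroup has at least the character, hence at least the weight, of $G$ — indeed $w(D)=w(G)$), forcing $X$ to be an uncountable compact subset of $D$, a contradiction. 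Hence $D$ witnesses that $G$ is not projectively-$w$-compact-determined, completing the contrapositive and therefore the theorem.

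The parenthetical ``equivalently, all dense pseudocompact'' in the statement is immediate from Theorem \ref{CR:theorem}, so no separate work is needed there. The main obstacle is clearly the ZFC construction of a $G_\delta$-dense subgroup without uncountable compact subsets in a compact group of weight $\omega_1$: $G_\delta$-density requires $D$ to meet every nonempty $G_\delta$-set (of which there are $\cont^{\omega_1}$, potentially more than $\cont$), while the no-uncountable-compacta requirement must be maintained against all $\omega_1$-sized would-be compact subsets — and as the abstract warns, one cannot hope to kill all infinite compacta here, so the bookkeeping must be arranged to destroy precisely the uncountable ones. I expect this to be carried out by a transfinite recursion of length $\cont$ (or with a suitable elementary-submodel / $\Delta$-system reflection to bring the number of $G_\delta$-sets that must be met down to $\cont$), choosing points of $D$ to meet $G_\delta$-sets while simultaneously ``spoiling'' each potential uncountable compact set by a diagonalization that exploits the weight being only $\omega_1$; the abelian-or-connected hypothesis presumably enters to guarantee enough independent characters / a product-like factor of $\T$'s on which to run the spoiling. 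The reduction to weight $\omega_1$ and the deduction from Theorem \ref{size:of:determined:subgroup}-style reasoning are comparatively routine; everything rests on that construction, which the paper defers to Section \ref{killing:uncountable:compacta}.
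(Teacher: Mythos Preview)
Your overall architecture is correct and matches the paper: argue by contraposition, pass to a weight-$\omega_1$ quotient, invoke the machinery of Section~\ref{killing:uncountable:compacta} to build a $G_\delta$-dense subgroup without uncountable compact subsets, and pull back. Two points deserve correction, one minor and one substantive.

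First, the minor one: your worry that ``a continuous homomorphic preimage of a non-projectively-$w$-compact group need not itself be non-projectively-$w$-compact'' is misplaced. If $f:G\to H$ is a continuous surjection of compact groups and $D\subseteq H$ is not $w$-compact, then $D_1=f^{-1}(D)$ maps continuously \emph{onto} $D$, so $D$ itself witnesses that $D_1$ is not projectively $w$-compact. This is exactly Fact~\ref{pull:back}(iii), and together with Fact~\ref{pull:back}(i),(ii) (density and pseudocompactness lift along perfect preimages) it gives the pull-back in one line; no reformulation via ``hypothesis closed under quotients'' is needed.

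The substantive gap is in your reduction for the connected case. You assert that a compact connected group maps onto $\T^{w(G)}$, but this is false: take $G=L^{\omega_1}$ with $L$ a compact simple Lie group (say $SU(2)$). Then $G$ is perfect, so $G/G'$ is trivial and $G$ has no nontrivial abelian quotient at all, let alone $\T^{\omega_1}$. Your parenthetical about ``products of $\T$'s and compact connected Lie groups'' points at the right structure theory, but the conclusion you draw from it is wrong. What the paper actually does (Fact~\ref{commutator:fact}(vii)) is: a compact connected group of weight $\omega_1$ maps onto $\T^{\omega_1}$ \emph{or} onto $L^{\omega_1}$ for some compact simple Lie group $L$. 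The second alternative is unavoidable, and handling it requires the non-abelian instance of the machinery (Corollary~\ref{Lie:corollary}, i.e.\ Theorem~\ref{variety:theorem} applied to the variety of all groups). Similarly, in the abelian case the paper does not stop at ``some quotient of weight $\omega_1$'' but reduces further, via Fact~\ref{standard:images}(ii), to $\T^{\omega_1}$ or $\Z(p)^{\omega_1}$, because the construction in Section~\ref{killing:uncountable:compacta} is carried out specifically inside powers $L^I$ of a compact metric group. So your plan to run the spoiling ``on a product-like factor of $\T$'s'' via characters cannot cover the semisimple connected case, where there are no characters; the non-abelian version of Theorem~\ref{variety:theorem} is essential.
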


Combining this result with Corollary \ref{tot:ps:min}, we obtain the following corollary solving  Question \ref{q3} in the negative and Question \ref{P-form} in the positive. 

\begin{corollary}
\label{ZFC:corollary}
If all $G_\delta$-dense subgroups of a compact abelian group  $G$ determine it, then $G$ is metrizable.
\end{corollary}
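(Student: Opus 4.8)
The plan is to derive Corollary~\ref{ZFC:corollary} directly from Theorem~\ref{ZFC} together with the necessary condition for determination isolated in Corollary~\ref{tot:ps:min}; no genuinely new argument is required beyond assembling these two facts in the right order.

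First I would observe that every $G_\delta$-dense subgroup of $G$ is in particular dense in $G$, so the hypothesis that all $G_\delta$-dense subgroups of $G$ determine $G$ says precisely that each of them is a dense subgroup determining the compact abelian group $G$. By Corollary~\ref{tot:ps:min}, a dense subgroup of a compact abelian group that determines it is projectively $w$-compact; hence every $G_\delta$-dense subgroup of $G$ is projectively $w$-compact. Since $G$ is abelian, Theorem~\ref{ZFC} now applies verbatim and yields that $G$ is metrizable. (By Theorem~\ref{CR:theorem} the hypothesis could equivalently be phrased for dense pseudocompact subgroups, matching the parenthetical form of Theorem~\ref{ZFC}, but this reformulation is not needed here.)

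Thus the entire weight of the corollary rests on Theorem~\ref{ZFC}, and that is where the real obstacle lies. I expect the proof of that theorem to proceed by contraposition: if $G$ is a non-metrizable compact abelian group, then $w(G)\ge\omega_1$, so $G$ admits a continuous surjective homomorphism onto a compact group $K$ with $w(K)=\omega_1$, and the task reduces to manufacturing inside $K$ a $G_\delta$-dense subgroup $E$ containing no uncountable compact subset. Granting such an $E$, any compact $C\subseteq E$ is metrizable and hence $w(C)\le\omega<\omega_1=w(E)$, so $E$ is not $w$-compact; pulling $E$ back along the homomorphism produces a $G_\delta$-dense subgroup of $G$ whose continuous image $E$ is not $w$-compact, so that subgroup is not projectively $w$-compact, contradicting the hypothesis. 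The genuinely delicate point — carried out in Section~\ref{killing:uncountable:compacta} — is exactly the ZFC construction of such an $E$ in a group of weight only $\omega_1$ rather than $\cont$, which is the technical heart behind this corollary.
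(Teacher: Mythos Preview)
Your proposal is correct and matches the paper's own proof: the corollary is obtained precisely by combining Corollary~\ref{tot:ps:min} with Theorem~\ref{ZFC}, exactly as you do. Your additional paragraph sketching the contrapositive strategy behind Theorem~\ref{ZFC} is accurate commentary but not part of the proof of the corollary itself.
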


Under the assumption of the Continuum Hypothesis, the following stronger version of Theorem \ref{ZFC} can be obtained in the abelian case.

\begin{theorem}\label{CH}
Assume CH. If all dense countably compact subgroups of a compact abelian group $G$ are projectively $w$-compact, then $G$ is metrizable.
\end{theorem}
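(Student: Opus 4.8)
The plan is to prove the contrapositive: under CH, if $G$ is a non-metrizable compact abelian group, then $G$ has a dense countably compact subgroup that is not projectively $w$-compact.

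First I would reduce to the case $w(G)=\omega_1$. Since $w(G)\ge\omega_1$, the discrete group $\widehat G$ has a subgroup $A$ of cardinality $\omega_1$; its annihilator $H$ in $G$ is a closed subgroup with $\widehat{G/H}\cong A$, so the quotient homomorphism $q\colon G\to K$, where $K=G/H$, is continuous, open and surjective and $w(K)=\omega_1$. Assume the weight-$\omega_1$ case has been handled and provides a dense countably compact subgroup $D'$ of $K$ that is \emph{not} $w$-compact, and set $D=q^{-1}(D')$. Then $D$ is dense in $G$ (as $q$ is open and surjective and $D'$ is dense), and the restriction $q\restriction_{D}\colon D\to D'$ is a perfect map: it is continuous, open and surjective, its fibres are cosets of the compact group $\ker q$, and it is closed because $F+\ker q$ is a closed, $q$-saturated subset of $D$ for every closed $F\subseteq D$. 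Since perfect preimages of countably compact spaces are countably compact \cite{E}, $D$ is countably compact; and $q\restriction_{D}$ exhibits the non-$w$-compact group $D'$ as a continuous homomorphic image of $D$, so $D$ is not projectively $w$-compact. Thus it suffices to treat $w(G)=\omega_1$.

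For this I would run the construction of Section~\ref{killing:uncountable:compacta}, which yields a $G_\delta$-dense subgroup $D$ of $G$ without uncountable compact subsets. Being dense, $D$ has $w(D)=w(G)=\omega_1$, while every compact subspace of $D$ is countable, hence metrizable, hence of weight $\le\omega<w(D)$; so $D$ contains no compact subset of weight $w(D)$, i.e.\ it is not $w$-compact and a fortiori not projectively $w$-compact. The role of CH is to let this $D$ also be chosen countably compact: under CH, $\left|[\omega_1]^{\le\omega}\right|=\cont=\omega_1$, so the countable subsets of the group being built can be listed in order type $\omega_1$, and I would add to the recursion $D=\bigcup_{\alpha<\omega_1}D_\alpha$ a closing-off clause which, at stage $\alpha$, adjoins to the next group one cluster point in $G$ of the $\alpha$-th countable set whenever that set has no cluster point in $D_\alpha$ (such a point exists because $G$ is compact); with the usual bookkeeping, the resulting $D$ is countably compact. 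The same refinement underlies the countably compact choice in Example~\ref{proj:AV:not:proj:W-compact}.

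Combining the two steps yields, for every non-metrizable compact abelian $G$, a dense countably compact subgroup that is not projectively $w$-compact, which is the contrapositive. The hard part will be the last claim in the weight-$\omega_1$ case: one must verify that the cluster points adjoined to secure countable compactness do not reintroduce uncountable compact subsets of $D$. This is precisely the feature that the machinery of Section~\ref{killing:uncountable:compacta} is built to provide --- it annihilates only the \emph{uncountable} compact subsets while tolerating convergent sequences and other countable compacta --- so that ``no uncountable compact subsets'' and ``countably compact'' can be carried through the $\omega_1$-length recursion simultaneously.
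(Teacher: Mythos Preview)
Your reduction step is fine and matches the paper's use of Fact~\ref{pull:back}, but the heart of your argument has a genuine gap. You propose to run the Section~\ref{killing:uncountable:compacta} construction and interleave it with a closing-off clause that adjoins cluster points of countable subsets, claiming that the machinery there ``is built to provide'' compatibility with such additions. It is not. The construction in Section~\ref{killing:uncountable:compacta} (see Lemmas~\ref{lemma:Lusin}--\ref{psc:Lusin}) is a one-shot Bernstein-type argument producing a $\V$-free subgroup $\grp{X}$; the proof that $\grp{X}$ is \B\ depends essentially on $X$ being $\V$-independent and on the uniform coordinate structure of Lemma~\ref{disturbing:lemma}. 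Adjoining arbitrary cluster points from $G$ destroys $\V$-freeness and breaks the support-based analysis that rules out uncountable compacta. There is no transfinite recursion $D=\bigcup_\alpha D_\alpha$ in that section into which a closing-off clause can be inserted, and the paper nowhere claims such a refinement; indeed, Question~10.8 asks whether a countably compact dense subgroup of $K^{\omega_1}$ without uncountable compact subsets exists in ZFC, showing the authors regard this as open without CH and not a corollary of Section~\ref{killing:uncountable:compacta}. Your appeal to Example~\ref{proj:AV:not:proj:W-compact} is also misplaced: the countably compact choice there invokes Lemma~\ref{claim:1}, not a modified Section~\ref{killing:uncountable:compacta} recursion.

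The paper's actual proof under CH takes a completely different route: after mapping $G$ onto $H=K^{\omega_1}$ with $K=\T$ or $K=\Z(p)$ (Fact~\ref{standard:images}(ii)), it invokes \emph{external} constructions---Tkachenko's countably compact free abelian group in $\T^{\omega_1}$ and the Dikranjan--Tkachenko result for $\Z(p)$---to obtain a dense countably compact $D\subseteq H$ with no non-trivial convergent sequences, and then applies the \v{C}ech--Pospi\v{s}il theorem to conclude that every compact subset of $D$ is finite (Lemma~\ref{claim:1}). Note that these $D$ have no infinite compact subsets at all, a stronger conclusion than ``no uncountable compact subsets'', and one that Section~\ref{killing:uncountable:compacta} deliberately avoids (Remark~\ref{MA:remark}). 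To repair your proof you would need either to supply a genuine argument that a CH closing-off can be made compatible with a \B\ construction, or---as the paper does---cite the Tkachenko and Dikranjan--Tkachenko results directly.
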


Since countable compactness is stronger then pseudocompactness and a dense pseudocompact subgroup of a compact abelian group  is $G_\delta$-dense in it (Theorem \ref{CR:theorem}), our Theorem \ref{CH} strengthens also the consistent result typeset in italics in Remark \ref{consistent:solution}. 

The proofs of Theorems \ref{strong:limit},  \ref{nonAV:inside}, \ref{nonAV:Gdelta}, \ref{cc:are:sAV} are postponed until Section \ref{basic:section}, 
while the
proofs of Theorems \ref{ZFC} and \ref{CH} are postponed until Section \ref{sec:8}.

Let $G$ be any compact abelian group of weight $\omega_1$. It follows from Theorem \ref{cc:are:sAV} that
all $G_\delta$-dense subgroups of $G$ are \sAV, even though $G$ is not metrizable. This shows that ``projectively $w$-compact'' cannot be weakened to 
``\sAV'' in the assumption of Theorems \ref{ZFC} and \ref{CH}.
Furthermore, 
since $\omega_1$ is  not a strong limit cardinal, Theorem \ref{strong:limit} implies that $G$ has a dense subgroup without the \AVp.
Combining Theorem  \ref{strong:limit}  with 
Example~\ref{psc:example}(ii) below, 
we obtain
compact abelian groups $G$  of arbitrarily large weight
such that  every $G_\delta$-dense subgroup of $G$ has the \AVp, but there exists a dense subgroup of $G$ without the \AVp. 

We finish this section with the following corollary of its main results.

\begin{corollary}
\label{huge:corollary}
For a compact abelian group $G$, the following conditions are equivalent:
\begin{itemize}
  \item[(i)] $G$ is metrizable;
  \item[(ii)] every dense subgroup of $G$ determines $G$;
  \item[(iii)] every $G_\delta$-dense (equivalently, each dense pseudocompact)  subgroup of $G$ determines $G$;
  \item[(iv)]  every dense subgroup of $G$ is \sAV;
  \item[(v)] every  $G_\delta$-dense (equivalently, each dense pseudocompact)  subgroup of $G$ is projectively $w$-compact. 
\end{itemize}
Furthermore, under CH, the following two items can be added to the list of equivalent conditions (i)--(v):
\begin{itemize}
  \item[(vi)] every dense countably compact subgroup of $G$ determines $G$;
  \item[(vii)] every dense countably compact subgroup of $G$ is projectively $w$-compact.
\end{itemize}
\end{corollary}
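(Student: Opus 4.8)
The plan is to prove the corollary purely by assembling the main theorems of the paper into two cycles of implications, using Theorem~\ref{CR:theorem} of Comfort and Ross at every stage to identify ``$G_\delta$-dense'' with ``dense pseudocompact'' (which is what legitimizes all the parenthetical ``equivalently'' clauses). No genuinely new argument should be needed; the real content sits inside Theorems~\ref{nonAV:inside}, \ref{ZFC} and \ref{CH}, which are proved elsewhere.

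First I would close the ZFC cycle (i)$\Rightarrow$(ii)$\Rightarrow$(iii)$\Rightarrow$(v)$\Rightarrow$(i). The implication (i)$\Rightarrow$(ii) is Theorem~\ref{CA}: a metrizable abelian group is determined, i.e.\ each of its dense subgroups determines it. The implication (ii)$\Rightarrow$(iii) is trivial, since every $G_\delta$-dense subgroup is dense. For (iii)$\Rightarrow$(v) I would invoke Corollary~\ref{tot:ps:min}: a dense subgroup determining a compact abelian group is projectively $w$-compact, so if every $G_\delta$-dense subgroup of $G$ determines $G$, then every such subgroup is projectively $w$-compact. Finally (v)$\Rightarrow$(i) is exactly Theorem~\ref{ZFC} applied in the abelian case (equivalently, it is Corollary~\ref{ZFC:corollary} composed with Corollary~\ref{tot:ps:min}). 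To fold in (iv), I would simply cite the equivalence (i)$\Leftrightarrow$(iv), which is precisely Theorem~\ref{nonAV:inside}; alternatively, (ii)$\Rightarrow$(iv) follows from the arrows (\ref{tot:ps:min}) and ($\beta$) of Diagram~1, and (iv)$\Rightarrow$(i) is again Theorem~\ref{nonAV:inside}.

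For the part under CH I would close the shorter cycle (i)$\Rightarrow$(vi)$\Rightarrow$(vii)$\Rightarrow$(i): the first implication is Theorem~\ref{CA} restricted to dense countably compact subgroups, the second is Corollary~\ref{tot:ps:min}, and (vii)$\Rightarrow$(i) is Theorem~\ref{CH} (which is where CH enters). Since a compact abelian group is covered by the hypothesis of Theorem~\ref{CH}, this suffices.

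The only ``obstacle'' here is bookkeeping rather than mathematics: one must be careful to route each implication through the theorem whose hypotheses actually match (abelianness of $G$ for the use of Theorem~\ref{ZFC}, and CH for Theorem~\ref{CH}), and one must resist claiming the reverse of a strictly one-directional arrow such as ($\alpha$) or ($\beta$) in Diagram~1. Once the two cycles above are in place, all of (i)--(v) are equivalent in ZFC, and (vi), (vii) join the list under CH.
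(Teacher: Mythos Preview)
Your proposal is correct and matches the paper's own proof almost exactly: the paper runs the cycles (i)$\to$(ii)$\to$(iv)$\to$(i), (i)$\to$(iii)$\to$(v)$\to$(i), and (under CH) (i)$\to$(vi)$\to$(vii)$\to$(i), citing Theorem~\ref{CA}, Diagram~1, Corollary~\ref{tot:ps:min}, and Theorems~\ref{nonAV:inside}, \ref{ZFC}, \ref{CH} in the same places you do. The only cosmetic difference is that you splice in the trivial step (ii)$\Rightarrow$(iii) to merge the first two cycles into one, whereas the paper keeps them separate.
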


\begin{proof}
(i)$\to$(ii) is Theorem \ref{CA}, (ii)$\to$(iv) follows from Diagram 1, (iv)$\to$(i) 
follows from
Theorem \ref{nonAV:inside}.

(i)$\to$(iii) follows from Theorem \ref{CA}, (iii)$\to$(v) follows from 
Corollary \ref{tot:ps:min}, (v)$\to$(i) is Theorem \ref{ZFC}.

(i)$\to$(vi) follows from Theorem \ref{CA}, (vi)$\to$(vii) follows from  Corollary \ref{tot:ps:min}. Finally, (vii)$\to$(i) is Theorem \ref{CH}. (We note that only the last implication needs CH.)
\end{proof}

\section{Pseudocompact groups of small weight without uncountable compact subsets}
\label{killing:uncountable:compacta}

For a subset $X$ of a group $G$ we denote by $\grp{X}$ the subgroup of $G$ generated by $X$.

By a {\em variety of groups\/} we mean, as usual, a class of groups closed under taking Cartesian products, subgroups and quotients (i.e., a {\em closed class\/} in the sense of Birkhoff \cite{Bi}). Another, equivalent, way of defining a variety is by giving a  fixed family of identities satisfied by all groups of the variety (\cite{Bi}; see also \cite[Theorem 15.51]{N}).

\begin{definition}
\label{variety:def}
Let $\V$ be a variety of groups. 
\begin{itemize}
\item[(a)]
Recall that a subset $X$ of a group $G$ is called {\em $\V$-independent\/} provided that the following two conditions are satisfied: 
\begin{itemize}
  \item[(i)] $\grp{X}\in \V$;
  \item[(ii)] for every map $f:X\to G$ with $G\in\V$, there exists a homomorphism $\tilde{f}:\grp{X}\to G$ extending $f$.
\end{itemize}
  \item[(b)] For every group $G\in\V$ the cardinal $r_\V(G)=\sup\{|X|: X$ is $\V$-independent subset of $G\}$
is called the {\em $\V$-rank\/} of $G$.
  \item[(c)] A group $G$ is {\em $\V$-free\/} if $G$ is generated by its $\V$-independent subset $X$. We call this $X$ the {\em generating set\/}  (or the set of {\em generators\/} of $G$ and we write $G=F_\V(X)$.
\end{itemize}
\end{definition}

\begin{theorem}
\label{variety:theorem}
Let $\V$ be a variety of groups and $L$ be a compact metric group that belongs to $\V$ such that $r_\V(L^\omega)\ge\omega$. Let $I$ be a set such that $\omega_1\le|I|\le\cont$. Then the group $L^I$ contains a $G_\delta$-dense (so dense pseudocompact) $\V$-free subgroup $D$ of cardinality $\cont$  such that all compact subsets of $D$ are countable; in particular $D$ is not w-compact.
\end{theorem}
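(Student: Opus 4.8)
The plan is to build the subgroup $D$ by a transfinite recursion of length $\cont$, interleaving two kinds of tasks: (1) hitting every non-empty $G_\delta$-subset of $L^I$ so that the final $D$ is $G_\delta$-dense, and (2) destroying every potential uncountable compact subset. For task (1) we use that $w(L^I)=|I|\le\cont$, so there are at most $\cont$ many basic $G_\delta$-sets (each determined by a countable subset of coordinates together with a $G_\delta$-condition in $L^{\omega}$); enumerate them as $\{B_\alpha:\alpha<\cont\}$. For task (2) the key observation is that a compact subset $K$ of $L^I$ is determined by countably many coordinates in the sense that, if $K$ is separable (in particular metrizable compact, or more generally if $w(K)\le\omega$), then $K$ lies in a ``thin'' subproduct; since every uncountable compact subset of $D$ would contain an uncountable, hence non-metrizable, closed subset, it is enough to ensure that $D$ meets no closed copy of a non-metrizable compact group in a way that produces an uncountable compact set. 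Concretely, I would arrange that every countable subset $S$ of $D$ has $\overline{S}\cap D$ still countable by controlling the supports.

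The engine is the hypothesis $r_{\V}(L^{\omega})\ge\omega$: it gives, inside $L^{\omega}$, a countably infinite $\V$-independent set, hence (taking the subgroup it generates) a copy of the $\V$-free group $F_{\V}(\omega)$ sitting as a subgroup of $L^{\omega}$. Splitting $I$ into $\cont$ many pairwise disjoint countable blocks $\{I_\alpha:\alpha<\cont\}$ (possible since $|I|\le\cont$), we get in each $L^{I_\alpha}\cong L^{\omega}$ a $\V$-independent countable set $X_\alpha$. The union $X=\bigcup_{\alpha<\cont}X_\alpha$, viewed inside $L^I$ via the block decomposition, is then $\V$-independent in $L^I$ (the extension property in (ii) of Definition \ref{variety:def}(a) is checked block-by-block using that $L^I\in\V$, as $\V$ is a variety and hence closed under products), so $D_0=\grp{X}$ is $\V$-free of cardinality $\cont$. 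Now refine this: do the recursion so that at stage $\alpha$, using freedom inside the block $I_\alpha$ (infinitely many coordinates still ``available''), one adjusts finitely many coordinates of finitely many chosen generators so that the resulting point lands inside $B_\alpha$; because each adjustment touches only finitely many coordinates and only $\cont$ many stages occur, the $\V$-independence of the whole generating set is preserved (it is a property witnessed by finite subsets together with the extension property, which is stable under such surgery when carried out in fresh coordinates), and at the end $D=\grp{X^{\mathrm{final}}}$ is $G_\delta$-dense, $\V$-free, of cardinality $\cont$.

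It remains to see that all compact subsets of $D$ are countable. Here I would argue by contradiction: suppose $K\subseteq D$ is an uncountable compact set. Since $w(K)=nw(K)\le|K|$ (Arhangel'ski\u{\i}'s theorem, cf. the discussion around Definition \ref{def:w-compact:AV}) and $K$ is uncountable, either $K$ is uncountable and metrizable — but an uncountable compact metrizable set contains a copy of the Cantor set, hence has size $\cont$ and its elements have supports spread over only countably many of the blocks $I_\alpha$, so $K$ is contained in a single $\grp{X_F}$ for a countable $F\subseteq\cont$; there $D\cap L^{\bigcup_{\alpha\in F}I_\alpha}$ is a countable free group generated by $\bigcup_{\alpha\in F}X_\alpha$, contradiction — or $w(K)>\omega$, and then the canonical projection of $K$ to some countable subproduct is already non-metrizable compact, again forcing an uncountable compact subset of a countable group after intersecting with $D$, a contradiction. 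The last clause of the theorem, that $D$ is not $w$-compact, is then immediate: $w(D)=w(L^I)=|I|\ge\omega_1$ is uncountable, while every compact subset of $D$ is countable and hence of countable weight, so no compact $C\subseteq D$ has $w(C)=w(D)$.

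The main obstacle, and the part that needs genuine care, is the simultaneous preservation of $\V$-independence throughout the $\cont$-step recursion while both forcing points into the $B_\alpha$'s and keeping supports controlled — i.e.\ showing that the ``surgery on finitely many fresh coordinates'' really does leave the extension property in Definition \ref{variety:def}(a)(ii) intact for the entire uncountable generating set. This is exactly the delicate bookkeeping alluded to in the abstract; I expect the bulk of the real work to be an auxiliary lemma isolating when modifying a $\V$-independent set on coordinates disjoint from its current support preserves $\V$-independence, together with a careful scheme for which coordinates are ``used up'' at each stage so that fresh ones always remain available in the relevant block.
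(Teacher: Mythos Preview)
Your proposal has a genuine gap in the argument that all compact subsets of $D$ are countable: there is no killing mechanism, and the a posteriori argument you sketch does not work. In your Case~1, you assert that an uncountable metrizable compact $K\subseteq D$ is contained in $\grp{X_F}$ for some countable $F\subseteq\cont$; but only a countable \emph{dense} subset $S$ of $K$ lies in such a $\grp{X_F}$, and since $\grp{X_F}$ is not closed in $L^I$ there is no reason for $K=\overline{S}$ to stay inside it. (The ``key observation'' that a metrizable compact $K\subseteq L^I$ lies in a thin subproduct is false as stated: what is true is that $p_J\restriction_K$ is a homeomorphism for some countable $J$, which is a much weaker statement.) In your Case~2, the claim that a projection of $K$ to some countable subproduct is non-metrizable is simply false, since $L$ is metric and hence $L^J$ is metrizable for every countable $J$. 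More fundamentally, $\V$-freeness together with countable supports does not by itself force compact subsets to be countable: already inside the separable metric group $L^\omega$, a generic $\V$-independent set $X$ of size $\cont$ will produce a $\grp{X}$ containing uncountable compact sets.

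The paper's proof supplies exactly the missing ingredient. First (Lemma~\ref{lemma:Lusin}) a Bernstein-type diagonalization is carried out \emph{inside the separable metric group} $K=L^\omega$: since there are only $\cont$ compact subsets of $\grp{X}$ of size $\cont$, one can enumerate them and, at stage $\alpha$, discard from the generating set a point $y_\alpha$ lying in the support of some element of $C_\alpha$, thereby ensuring $C_\alpha\not\subseteq\grp{X'}$. This yields a $\V$-independent $Z\subseteq K$ with $|Z|=\cont$ and $\grp{Z}$ semi-Bernstein. Second (Lemmas~\ref{disturbing:lemma} and~\ref{psc:Lusin}), this is lifted to $K^I$: each generator $x_{\alpha,J}\in K^I$ is defined to agree with a fixed $z_{\alpha,J}\in Z$ on all but the countably many coordinates in $J$, with the values on $J$ chosen to hit the prescribed $G_\delta$-set. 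The semi-Bernstein property transfers because, for any countable $Y\subseteq X$, the projection $\pi_i$ coincides with the canonical isomorphism $\Phi:\grp{X}\to\grp{Z}$ on $\grp{Y}$ for all but countably many $i\in I$; an uncountable compact $C\subseteq\grp{X}$ then forces a sequence in $C$ whose $\Phi$-image converges to a point of $K\setminus\grp{Z}$, and a limit-point argument in $K^I$ produces a point of $C$ mapping to that forbidden point under $\Phi$, a contradiction. Your block decomposition and surgery idea does not provide an analogue of either step.
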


The proof of this theorem is postponed until Section \ref{proof:of:variety:theorem}.

\begin{corollary}
\label{Lie:corollary}
Let $L$ be a compact simple Lie group. Then for every uncountable set $I$ of size at most $\cont$, the group $L^I$ contains a $G_\delta$-dense free subgroup $D$ of cardinality $\cont$ such that all compact subsets of $D$ are countable;  in particular $D$ is not w-compact.
\end{corollary}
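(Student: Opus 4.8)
The plan is to derive Corollary~\ref{Lie:corollary} from Theorem~\ref{variety:theorem} by choosing the right variety~$\V$ and verifying the hypotheses on~$L$. Since $L$ is a compact simple Lie group, it is in particular a compact metric group (Lie groups are second countable, hence metrizable). The natural choice is to let $\V$ be the variety $\mathcal{O}$ of \emph{all} groups; then a $\V$-independent subset is precisely a set of free generators of a free (abstract) group, an $\mathcal{O}$-free group is just a free group, and the conclusion ``$\V$-free'' becomes ``free'' as stated in the corollary. With this choice, $L^I \in \mathcal{O}$ trivially, and $\omega_1 \le |I| \le \cont$ is exactly the hypothesis on $I$, so the only thing left to check is the rank condition $r_{\mathcal{O}}(L^\omega) \ge \omega$, i.e.\ that the countable power $L^\omega$ contains an infinite free subset (a countably infinite set freely generating a free subgroup of $L^\omega$).

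The heart of the argument is therefore: \emph{a compact simple Lie group $L$ contains a free subgroup on $\omega$ generators, and this lifts to $L^\omega$.} For the first part I would invoke the Tits alternative, or rather its classical special case going back to the constructions of free groups in $SO(3)$: a connected non-solvable (here even simple non-abelian) Lie group contains a free subgroup of rank~$2$, hence one of countably infinite rank (a free group of rank~$2$ already contains free subgroups of every countable rank, e.g.\ via the commutator-type words $x^n y x^{-n}$). To get an infinite free subset of $L^\omega$ rather than of $L$ itself, note that any free subset of $L$ is automatically a free subset of $L^\omega$ via the diagonal embedding $L \hookrightarrow L^\omega$; alternatively, picking a free pair in~$L$ and spreading independent copies across the coordinates makes the freeness even more transparent. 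Either way $r_{\mathcal{O}}(L^\omega) \ge r_{\mathcal{O}}(L) \ge \omega$. Once this is in hand, Theorem~\ref{variety:theorem} applied with $\V = \mathcal{O}$ and this $L$ delivers, inside $L^I$, a $G_\delta$-dense free subgroup $D$ of cardinality $\cont$ all of whose compact subsets are countable; the final clause ``in particular $D$ is not $w$-compact'' is immediate since $w(L^I) = |I| \ge \omega_1$ while every compact subset of $D$, being countable, has weight $\le \omega$.

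The step I expect to be the main (and really the only) obstacle is the verification that a compact simple Lie group contains a nonabelian free subgroup. This is where one must actually use that $L$ is \emph{simple} (hence not solvable, not even virtually solvable in a strong sense) and a Lie group of positive dimension: a compact torus, for instance, contains no free subgroup of rank~$\ge 2$, so abelian compact Lie groups are genuinely excluded by the hypothesis. The cleanest citation is the Tits alternative (a finitely generated linear group is either virtually solvable or contains a free subgroup of rank~$2$) applied to a faithful matrix representation of~$L$ restricted to a suitable finitely generated dense-enough subgroup; since $L$ is simple and non-abelian it is not virtually solvable, so the free-subgroup horn of the alternative holds. One should remark that a free group of rank~$2$ contains free subgroups of every finite and of countably infinite rank, so $r_{\mathcal{O}}(L) \ge \omega$, completing the proof. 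It would be prudent to also record explicitly that Lie groups are second countable and metrizable so that ``compact metric group'' in Theorem~\ref{variety:theorem} is satisfied, and that a subset of $L$ that is $\mathcal{O}$-independent is the same as an algebraically free subset, so the abstract Definition~\ref{variety:def} specializes correctly.
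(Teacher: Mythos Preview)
Your proposal is correct and follows essentially the same route as the paper: take $\V$ to be the variety of all groups, verify $r_\V(L^\omega)\ge r_\V(L)\ge\omega$, and invoke Theorem~\ref{variety:theorem}. The only difference is in how the inequality $r_\V(L)\ge\omega$ is justified: the paper simply cites Balcerzyk--Mycielski \cite[Theorem~2]{BM} (a 1957 result on free subgroups of topological groups), whereas you appeal to the Tits alternative, which is a later and heavier tool but certainly does the job once one passes to a suitable finitely generated linear subgroup of $L$.
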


\begin{proof}
By \cite[Theorem 2]{BM}, $r_{\mathcal{G}}(L^\omega)\ge r_{\mathcal{G}}(L)\ge\omega$, where $\mathcal{G}$ is the variety of all groups. Now we can  apply Theorem \ref{variety:theorem} with $\V=\mathcal{G}$.
\end{proof}

\begin{corollary}
\label{semi-Bernstein:theorem}
For every non-trivial compact metric abelian group $L$ and every uncountable set $I$ of size at most $\cont$, the group $L^I$ contains a $G_\delta$-dense subgroup $D$ of cardinality $\cont$ such that all compact subsets of $D$ are countable; in particular $D$ is not w-compact. Furthermore, if $L$ is unbounded, then $D$ can be chosen to  be free.
\end{corollary}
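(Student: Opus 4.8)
The plan is to derive the statement from Theorem~\ref{variety:theorem} by choosing, according to whether $L$ is bounded, an appropriate variety $\V$ of abelian groups with $L\in\V$ and $r_\V(L^\omega)\ge\omega$; Theorem~\ref{variety:theorem} then delivers the desired $D$ inside $L^I$ at one stroke, including the clause that $D$ is not $w$-compact, which is part of its conclusion. Since $L$ is a non-trivial abelian group, exactly one of two cases occurs: $L$ is \emph{unbounded}, i.e.\ $nL\ne\{0\}$ for every positive integer $n$; or $L$ is \emph{bounded} with some finite exponent $n\ge 2$. I would handle these with the variety $\mathcal A$ of all abelian groups in the first case and with the variety $\mathcal A_n=\{G\in\mathcal A:nG=\{0\}\}$ in the second.

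In the unbounded case the first step is to check $r_{\mathcal A}(L^\omega)\ge\omega$. Unboundedness yields, for each $k\in\N$, an element $y_k\in L$ with $(k!)\,y_k\ne 0$, hence $\mathrm{ord}(y_k)>k$, so $\mathrm{lcm}_{k\in\omega}\mathrm{ord}(y_k)=\infty$. Identifying $L^\omega$ with $L^{\omega\times\omega}$, I would define, for each $i\in\omega$, the element $b_i\in L^{\omega\times\omega}$ supported on the $i$-th ``row'', with $b_i(i,k)=y_k$. Each $b_i$ has infinite order, the $b_i$ have pairwise disjoint supports, so $\grp{\{b_i:i\in\omega\}}$ is free abelian with basis $\{b_i:i\in\omega\}$; since a function from a basis of a free abelian group into an abelian group always extends to a homomorphism, this basis is $\mathcal A$-independent, which proves the claim. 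Then Theorem~\ref{variety:theorem} applied with $\V=\mathcal A$, together with the fact that an $\mathcal A$-free group is precisely a free abelian group, yields the required $D$, now free---this gives the ``furthermore'' clause.

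In the bounded case, $\mathcal A_n$ is a variety (given by the identities $x+y=y+x$ and $nx=0$), $L\in\mathcal A_n$, and $F_{\mathcal A_n}(\omega)\cong\Z(n)^{(\omega)}=\bigoplus_{k\in\omega}\Z(n)$. Here I would verify $r_{\mathcal A_n}(L^\omega)\ge\omega$ as follows: since $n$ is the least common multiple of the orders of the elements of $L$, for every prime $p$ dividing $n$ there is an element of $L$ whose order is divisible by the $p$-part of $n$; as only finitely many primes divide $n$, combining these elements over a finite power of $L$ produces an element of order exactly $n$, so $L$---hence $L^\omega$---contains an element of order $n$. Distributing $\omega$ many copies of such an element over pairwise disjoint infinite blocks of coordinates of $L^\omega$ yields a subgroup isomorphic to $\Z(n)^{(\omega)}$ on which these elements form a free basis in $\mathcal A_n$; as $L^\omega\in\mathcal A_n$, the extension condition is automatic, so the set is $\mathcal A_n$-independent. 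Theorem~\ref{variety:theorem} with $\V=\mathcal A_n$ then supplies $D$.

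Finally, in both cases the remaining assertions---$|D|=\cont$, $G_\delta$-density of $D$ in $L^I$, countability of all compact subsets of $D$, and hence failure of $w$-compactness---are exactly what Theorem~\ref{variety:theorem} provides, so nothing further is required. Within this argument the only point demanding care is the inequality $r_\V(L^\omega)\ge\omega$: the delicate aspect is not producing a single element of the right order but exhibiting \emph{infinitely many} independent ones and verifying the homomorphism-extension clause in the definition of $\V$-independence. The substantive difficulty---simultaneously arranging $G_\delta$-density and the absence of uncountable compact subsets inside a group of weight as small as $\omega_1$---lies entirely in Theorem~\ref{variety:theorem}, which is invoked here as a black box.
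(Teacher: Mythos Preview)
Your proposal is correct and follows essentially the same approach as the paper's proof: both split into the bounded and unbounded cases, choose $\V=\mathcal A_n$ or $\V=\mathcal A$ accordingly, and invoke Theorem~\ref{variety:theorem}. The only difference is that you supply explicit verifications of $r_\V(L^\omega)\ge\omega$ in each case, whereas the paper simply asserts this inequality; your arguments for it are sound (in the bounded case your phrasing ``so $L$---hence $L^\omega$---contains an element of order $n$'' is slightly elliptical, but the conclusion is correct since in an abelian group elements of pairwise coprime orders sum to an element whose order is the product).
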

\begin{proof}
We consider two cases.

\smallskip
{\em Case 1\/}. {\em $L$ is bounded\/}. Let $n$ be the order of $L$, and let $\mathcal{A}_n$ be the variety of abelian groups of order $n$.
Then $L\in\mathcal{A}_n$ and $r_{\mathcal{A}_n}(L^\omega)\ge\omega$,  so the conclusion follows from Theorem \ref{variety:theorem} applied  to $\V=\mathcal{A}_n$.

\smallskip
{\em Case 2\/}. {\em $L$ is unbounded\/}. Let $\mathcal{A}$ be the variety of all  abelian groups. Then $r_{\mathcal{A}}(L^\omega)\ge\omega$, so the conclusion follows from Theorem \ref{variety:theorem} applied to $\V=\mathcal{A}$.
\end{proof}

Following \cite[Definition 5.2]{DS-Mem}, we say that a variety $\V$ is {\em precompact\/} if $\V$ is generated by its finite groups. One can find a host of conditions equivalent to precompactness of a variety in  \cite[Lemma 5.1]{DS-Mem}. 
In particular,  it is worth noting in connection with Theorem \ref{variety:theorem} that the existence of a compact group $L \in \V$ with $r_\V(L)\ge\omega$ is equivalent to precompactness of the variety $\V$ \cite[Lemma 5.1]{DS-Mem}.

Most of the well-known varieties are precompact; see \cite[Lemma 5.3]{DS-Mem} and the comment following this lemma. The Burnside variety $\mathcal{B}_n$ for odd $n>665$ is not precompact \cite{D}. 

\begin{corollary}
\label{V-free:group:corollary}
For a variety $\V$, the following conditions are equivalent:
\begin{itemize}
\item[(i)] $\V$ is precompact;
\item[(ii)]
for every cardinal $\sigma$ with $\omega_1\le\sigma\le\cont$, the $\V$-free group with $\cont$ many generators admits a  pseudocompact group topology of weight $\sigma$ without uncountable compact subsets;
in particular, this topology is not $w$-compact. 
\end{itemize}
\end{corollary}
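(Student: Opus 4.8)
The plan is to prove Corollary \ref{V-free:group:corollary} by establishing the two implications separately, drawing the nontrivial direction from Theorem \ref{variety:theorem}.

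\medskip
\textbf{Proof of (i)$\Rightarrow$(ii).} Assume $\V$ is precompact. By \cite[Lemma 5.1]{DS-Mem}, precompactness of $\V$ is equivalent to the existence of a compact group $L\in\V$ with $r_\V(L)\ge\omega$; since $\V$ is precompact we may in fact take $L$ to be \emph{finite} (as $\V$ is generated by its finite members, a finite group of the variety of sufficiently large $\V$-rank can be chosen, or one passes to a finite power of such), so that $L$ is automatically a compact metric group in $\V$. From $r_\V(L)\ge\omega$ we get $r_\V(L^\omega)\ge r_\V(L)\ge\omega$. Now fix a cardinal $\sigma$ with $\omega_1\le\sigma\le\cont$ and choose a set $I$ with $|I|=\sigma$; then $\omega_1\le|I|\le\cont$, so Theorem \ref{variety:theorem} applies to $L$ and $I$ and yields a $G_\delta$-dense (hence dense pseudocompact, by Theorem \ref{CR:theorem}) $\V$-free subgroup $D$ of $L^I$ with $|D|=\cont$ such that every compact subset of $D$ is countable; in particular $D$ is not $w$-compact. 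Being a dense subgroup of the compact group $L^I$ of weight $|I|=\sigma$, the group $D$ carries a precompact group topology with $w(D)=w(L^I)=\sigma$ (density preserves weight for topological groups), and this topology is pseudocompact. Since $D$ is $\V$-free with generating set of cardinality $|D|=\cont$, this exhibits the $\V$-free group on $\cont$ generators with a pseudocompact group topology of weight $\sigma$ in which all compact subsets are countable, so there are no uncountable compact subsets and the topology is not $w$-compact. As $\sigma$ was arbitrary in $[\omega_1,\cont]$, (ii) holds.

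\medskip
\textbf{Proof of (ii)$\Rightarrow$(i).} Assume (ii). Pick any $\sigma$ in the range, say $\sigma=\omega_1$, and let $\tau$ be the resulting pseudocompact group topology on the $\V$-free group $F=F_\V(X)$ with $|X|=\cont$. The completion $K=\widehat{(F,\tau)}$ is a compact group; it belongs to $\V$ because $\V$, being a variety, is closed under the relevant limits and subgroups (every identity valid on $F$ passes to its completion, as identities are preserved under closure in topological groups). Moreover $K$ is infinite (indeed $w(K)=w(F,\tau)=\omega_1$), and $F$ is a $\V$-free dense subgroup of $K$, so its $\V$-independent generating set $X$ is a $\V$-independent subset of $K$ of cardinality $\cont$; thus $r_\V(K)\ge\cont>\omega$. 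Hence $\V$ contains a compact group of infinite $\V$-rank, which by \cite[Lemma 5.1]{DS-Mem} is equivalent to precompactness of $\V$. (Alternatively: if $\V$ were not precompact it would contain a non-precompact relatively free group, and by the quoted lemma no compact group in $\V$ could have infinite $\V$-rank, contradicting the existence of $K$.) Therefore $\V$ is precompact, which is (i).

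\medskip
The main obstacle, and the only place any real work is needed, is (i)$\Rightarrow$(ii): one must correctly extract from precompactness of $\V$ a compact \emph{metric} group $L\in\V$ with $r_\V(L^\omega)\ge\omega$ so that Theorem \ref{variety:theorem} is applicable, and then verify that the subgroup $D$ produced there, viewed with its subspace topology, is simultaneously $\V$-free on a set of $\cont$ generators, pseudocompact, of the prescribed weight $\sigma$, and devoid of uncountable compact subsets. All of these are either immediate from the statement of Theorem \ref{variety:theorem} or follow from standard facts (density preserves weight in topological groups; Theorem \ref{CR:theorem} identifies $G_\delta$-dense with dense pseudocompact), so the proof is essentially a bookkeeping argument once the equivalence ``precompact $\iff$ a compact group of infinite $\V$-rank exists'' from \cite[Lemma 5.1]{DS-Mem} is invoked.
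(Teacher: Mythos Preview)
Your overall strategy matches the paper's: for (i)$\Rightarrow$(ii) you invoke \cite[Lemma 5.1]{DS-Mem} to produce a suitable $L$ and then apply Theorem \ref{variety:theorem}; for (ii)$\Rightarrow$(i) the paper simply cites \cite[Theorem 5.5]{DS-Mem}, whereas you give a direct argument via the completion. The latter is a legitimate and slightly more self-contained alternative: the completion $K$ of the pseudocompact $\V$-free group lies in $\V$ (identities pass to closures) and has $r_\V(K)\ge\cont$, so the equivalence in \cite[Lemma 5.1]{DS-Mem} yields precompactness.

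There is, however, a genuine slip in your (i)$\Rightarrow$(ii). You write that one may take $L$ to be \emph{finite} with $r_\V(L)\ge\omega$. This is impossible: any $\V$-independent subset of $L$ is a subset of $L$, so $r_\V(L)\le|L|<\omega$ whenever $L$ is finite. Your parenthetical justification (``a finite group of sufficiently large $\V$-rank'' or ``a finite power of such'') does not help, since a finite power of a finite group is still finite. The paper avoids this by citing \cite[Lemma 5.1]{DS-Mem} for a compact \emph{metric} group $L\in\V$ with $r_\V(L)\ge\omega$ directly; no finiteness is claimed or needed. Once you drop the erroneous aside and quote Lemma 5.1 as stated, your argument goes through exactly as the paper's does. (If you prefer to work with a finite $L$, the correct route is to choose a nontrivial finite $L\in\V$ and argue that $r_\V(L^\omega)\ge\omega$; but then you must justify this inequality separately, not via $r_\V(L)\ge\omega$.)
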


\begin{proof}
(i)$\to$(ii)
Suppose that $\V$ is precompact. By \cite[Lemma 5.1]{DS-Mem}, there exists  a compact metric group $L\in\V$ with $r_\V(L)\ge\omega$. Since  $r_\V(L^\omega)\ge r_\V(L)$, applying Theorem \ref{variety:theorem} we get (ii).

(ii)$\to$(i) This follows from \cite[Theorem 5.5]{DS-Mem}.
\end{proof}

Our next remark shows that Theorem \ref{variety:theorem} and its Corollaries \ref{Lie:corollary}, \ref{semi-Bernstein:theorem} and \ref{V-free:group:corollary} are the best possible results  that one can obtain in ZFC.

\begin{remark}
\label{MA:remark}
Assume MA+$\neg$CH, where MA stands for Martin's Axiom. 
In Theorem \ref{variety:theorem} and Corollaries \ref{Lie:corollary}, \ref{semi-Bernstein:theorem},
take $I$ to be a set of size $\omega_1$, 
and let $D$ be the group as in 
the conclusion of these results.
In Corollary \ref{V-free:group:corollary}, let $\sigma=\omega_1$ and let $D$ denote the $\V$-free group with $\cont$ many generators.
Then $D$ is a topological group of weight $\omega_1<\cont$. Since MA holds, every countable subgroup of $D$ is Fr\'{e}chet-Urysohn \cite{MS}; 
in particular, $D$ contains many non-trivial convergent sequences. Therefore, 
``all compact subsets of $D$ are countable'' cannot be 
strengthened to ``all compact subsets of $D$ are finite''
in conclusions of 
Theorem \ref{variety:theorem} and Corollaries \ref{Lie:corollary}, \ref{semi-Bernstein:theorem},
and 
``without uncountable compact subsests''
cannot be 
strengthened to 
``without infinite compact subsests''
in the the conclusion of
 Corollary \ref{V-free:group:corollary}.
\end{remark}

Recall that the strongest totally bounded group topology on a group is called  its {\em Bohr topology\/}.

\begin{remark}
\label{free:remark}
Thom recently proved that the free group with two generators equipped with its Bohr topology contains a non-trivial convergent sequence \cite{Thom}. This easily implies that {\em every\/} precompact group topology on the free group with two generators contains a non-trivial convergent sequence. Since pseudocompact groups are precompact, it follows that {\em every pseudocompact  free group of size $\cont$ contains a non-trivial convergent sequence\/}.
Combining this with Theorem \ref{CR:theorem}, we conclude that the group $D$  as in the conclusion of Corollary \ref{Lie:corollary} contains a non-trivial convergent sequence.
This shows that 
``all compact subsets of $D$ are countable'' cannot be strengthened to
``all compact subsets of $D$ are finite''
in the conclusion of 
Corollary \ref{Lie:corollary}  and 
``without uncountable compact subsests''
cannot be 
strengthened to 
``without infinite compact subsests''
in the the conclusion of
Corollary \ref{V-free:group:corollary} when $\V$ is the variety of all  groups.
\end{remark}

\section{Proofs of Theorems \ref{strong:limit},  \ref{nonAV:inside}, \ref{nonAV:Gdelta}, \ref{cc:are:sAV}.}
\label{basic:section}

\medskip
\noindent
{\bf Proof of Theorem \ref{strong:limit}:}
Suppose that $w(G)$ is not a strong limit cardinal. Then  there exists  a dense subgroup $D$ of $G$ such that $|D|=d(G)<w(G)=w(D)$;
see \cite[Theorem 2.7]{I}. Hence, $D$ does not have the \AVp.

Suppose now that $w(G)$ is a strong limit cardinal. Let $D$ be a dense subgroup of $G$. Since $w(D)=w(G)$, the cardinal $w(D)$ is strong limit. Hence, $D$ has the \AVp\ by Proposition \ref{AV:proposition}~(iv).
\qed

\medskip

 \noindent
 {\bf Proof of Theorem \ref{nonAV:Gdelta}:}
Assume that every $G_\delta$-dense subgroup of $G$ has the \AVp. According to \cite{CoRo}, $G$ has a $G_\delta$-dense subgroup $D$ of size $m(\sigma)$.  Since $D$ has the \AVp, this yields $m(\sigma)=|D|\ge w(D) = w(G) = \sigma$. Conversely, if $m(\sigma)\ge\sigma$ holds, then for every $G_\delta$-dense subgroup $D$ of $G$, one has  $|D|\geq m(\sigma) \geq \sigma = w(D)$,  so $D$ has the \AVp. 
\qed

\begin{fact}
\label{cont:images:of:smaller:weight}
\cite[Lemma 1.5]{IS}
Let $G$ be an infinite compact group. For every infinite cardinal $\tau\le w(G)$ there exists 
a continuous homomorphism $f:G\to H$ of $G$ onto a compact group $H$ with $w(H)=\tau$. 
\end{fact}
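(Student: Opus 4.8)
The plan is to realize $G$ through the family $\widehat{G}$ of equivalence classes of its irreducible finite-dimensional unitary representations (available by the Peter--Weyl theorem, as $G$ is compact) and then to pass to a quotient retaining exactly $\tau$ of them. I shall use that for a compact group $K$ one has $w(K)=\max\{\omega,|\widehat{K}|\}$, and that $|\widehat{K}|$ is infinite as soon as $K$ is infinite: otherwise the point-separating injection $k\mapsto(\rho(k))_{[\rho]\in\widehat{K}}$ would embed $K$ into a finite product of unitary groups, making $K$ a compact Lie group, which (being infinite) already has infinitely many irreducible representations --- a contradiction. Consequently $|\widehat{G}|=w(G)\ge\tau$, so we may fix $S\subseteq\widehat{G}$ with $|S|=\tau$; set $N=\bigcap_{[\rho]\in S}\ker\rho$ (a closed normal subgroup of $G$), $H=G/N$, and let $f:G\to H$ be the quotient homomorphism. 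It then suffices to check that $w(H)=\tau$.

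The upper bound is immediate: every $\rho\in S$ is trivial on $N$, hence factors through $H$, and the induced map $H\to\prod_{[\rho]\in S}\rho(G)$ is a continuous injection of the compact group $H$ into a product of at most $\tau$ compact metrizable groups; a continuous injection from a compact space into a Hausdorff space is an embedding, so $w(H)\le\max\{\omega,|S|\}=\tau$. For the lower bound, each $\rho\in S$ induces a representation $\bar\rho$ of $H$ with $\bar\rho(H)=\rho(G)$, hence $\bar\rho$ is irreducible, and $[\rho]\mapsto[\bar\rho]$ is injective (an intertwiner between $\bar\rho$ and $\bar\sigma$ would intertwine $\rho$ and $\sigma$); thus $|\widehat{H}|\ge|S|=\tau$, and the weight formula yields $w(H)\ge|\widehat{H}|\ge\tau$. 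Combining the two bounds gives $w(H)=\tau$, as wanted. For $\tau=\omega$ this specializes to the classical fact that an infinite compact group has an infinite compact metrizable quotient.

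The one genuinely nontrivial ingredient, and the step I expect to be the main obstacle, is the inequality $w(K)\ge|\widehat{K}|$ for compact $K$ (the reverse inequality $w(K)\le\max\{\omega,|\widehat{K}|\}$ following at once from the point-separating embedding above). I would argue as follows. The Peter--Weyl theorem decomposes $L^2(K)$, with respect to normalized Haar measure, into a Hilbert-space direct sum of pairwise orthogonal subspaces indexed by $\widehat{K}$, the summand for $[\rho]$ having dimension $n_\rho^2\ge1$; hence the Hilbert-space dimension of $L^2(K)$ is at least $|\widehat{K}|$. On the other hand, $C(K)$ is dense in $L^2(K)$ (since $\|\cdot\|_2\le\|\cdot\|_\infty$ on a probability space), and $C(K)$, with the uniform norm, has a dense subset of size at most $w(K)$: embed $K$ in the Tychonoff cube $[0,1]^{w(K)}$, note that restriction maps $C([0,1]^{w(K)})$ onto $C(K)$ without increasing the uniform norm, and apply the Stone--Weierstrass theorem to see that the rational polynomials in the $w(K)$ coordinate functions form a uniformly dense subset of $C([0,1]^{w(K)})$ of cardinality $w(K)$. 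Therefore the density character of $L^2(K)$ is at most $w(K)$; since the density character of an infinite-dimensional Hilbert space equals its orthonormal-basis dimension --- and $L^2(K)$ is finite-dimensional only for finite $K$ --- we conclude $|\widehat{K}|\le w(K)$. No set-theoretic hypotheses enter the argument at any point.
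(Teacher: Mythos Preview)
Your argument is correct. The paper does not prove this statement at all: it records it as Fact~\ref{cont:images:of:smaller:weight} with a citation to \cite[Lemma 1.5]{IS} and uses it as a black box, so there is no ``paper's own proof'' to compare against. Your route via Peter--Weyl and the identity $w(K)=\max\{\omega,|\widehat{K}|\}$ for compact $K$ is the standard one, and each step checks out, including the $L^2$ density argument for the inequality $|\widehat{K}|\le w(K)$. One minor simplification: your detour through ``$K$ is a compact Lie group, which has infinitely many irreducibles'' to show $|\widehat{K}|$ is infinite when $K$ is infinite can be replaced by the direct observation that if $\widehat{K}$ were finite then $L^2(K)=\bigoplus_{[\rho]\in\widehat{K}}V_\rho^{n_\rho}$ would be finite-dimensional, which is impossible for an infinite compact group with normalized Haar measure.
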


\begin{fact}
\label{pull:back}
Suppose that $f:G\to H$ is a continuous surjective homomorphism of compact abelian groups, $D$ is a subgroup of $H$ and $D_1=f^{-1}(D)$. 
\begin{itemize}
\item[(i)] If $D$ is dense in $H$, then $D_1$ is dense in $G$.
\item[(ii)] If $D$ is pseudocompact (countably compact, $\kappa$-bounded for some infinite cardinal $\kappa$), then $D_1$ has the same property.
\item[(iii)] If $D$ is not (projectively) $w$-compact, then $D_1$ is not projectively $w$-compact either.
\item[(iv)] If $D$ is not (projectively) \AV, then $D_1$ is not \sAV\ either.
\end{itemize}
\end{fact}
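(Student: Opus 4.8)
\medskip
\noindent
The plan is to isolate one structural observation and deduce all four items from it. Since $0\in D$, the kernel $N=\ker f$ is contained in $D_1=f^{-1}(D)$, and, being closed in the compact group $G$, it is compact. Hence the restriction $f\restriction_{D_1}\colon D_1\to D$ is a continuous surjective homomorphism with compact kernel $N$: it is closed, being the restriction of the closed map $f$ to the $f$-saturated set $D_1$, and its fibres are the compact cosets of $N$, so it is a perfect map and induces a topological isomorphism $D_1/N\cong D$. Thus $D_1$ is an extension of the compact group $N$ by $D$, and each of (i)--(iv) will follow by transporting the relevant property across the perfect quotient $q\colon D_1\to D_1/N\cong D$.

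Item (i) is immediate: $\overline{D_1}$ is a closed subgroup of $G$ containing $N$, hence $f$-saturated, so $\overline{D_1}=f^{-1}\big(f(\overline{D_1})\big)$; as $f(\overline{D_1})$ is a closed subgroup of $H$ containing the dense subgroup $D=f(D_1)$, it equals $H$, and therefore $\overline{D_1}=f^{-1}(H)=G$. For item (ii) I would use that $D_1$, being a perfect preimage of $D$ under $q$, inherits countable compactness and pseudocompactness, these being inverse invariants of perfect maps between Tychonoff spaces; for pseudocompactness one may instead note that $\overline{D_1}=f^{-1}(\overline{D})$ and apply Theorem~\ref{CR:theorem}, reducing ``$D_1$ is $G_\delta$-dense in $\overline{D_1}$'' to ``$D$ is $G_\delta$-dense in $\overline{D}$'' by testing on cosets of closed $G_\delta$-subgroups and pushing them forward along $f$. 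For $\kappa$-boundedness: given $A\in[D_1]^{\le\kappa}$, the closure of $q(A)$ in $D$ is compact since $|q(A)|\le\kappa$ and $D$ is $\kappa$-bounded; as $q$ is perfect, $q^{-1}\big(\overline{q(A)}\big)$ is a compact subset of $D_1$ containing $A$, so the closure of $A$ in $D_1$ is compact.

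Items (iii) and (iv) are purely formal and I would prove them by contraposition, using that postcomposition with $f\restriction_{D_1}$ carries a continuous homomorphic image of $D$ to one of $D_1$. Suppose $D_1$ is projectively $w$-compact (respectively, \sAV). For any continuous surjective homomorphism $g\colon D\to M$, the map $g\circ(f\restriction_{D_1})\colon D_1\to M$ is again a continuous surjective homomorphism, so $M$ is $w$-compact (respectively, has the \AVp). Hence $D$ is projectively $w$-compact (respectively, \sAV), and, taking $g=\mathrm{id}_D$, in particular $w$-compact (respectively, has the \AVp); this simultaneously covers both alternatives that appear in parentheses in the statement.

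Almost every step is mechanical, resting either on $N=\ker f$ being compact or on postcomposing with $f\restriction_{D_1}$. The only point requiring genuine care is the stability of pseudocompactness under the perfect quotient $q$ in item (ii); but this is handled either by a standard theorem on perfect maps (see \cite{E}) or, staying strictly within the tools already developed here, by the Comfort--Ross criterion as indicated, so I do not anticipate a real obstacle.
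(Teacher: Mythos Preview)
Your proof is correct and follows essentially the same approach as the paper: both arguments hinge on the fact that $f$ (equivalently, its restriction to the saturated set $D_1$) is a perfect map, and then invoke inverse-invariance of the listed compactness-type properties for item (ii) and the trivial observation that continuous homomorphic images of $D$ are also continuous homomorphic images of $D_1$ for items (iii) and (iv). The paper is considerably terser---it dispatches (ii) in one sentence by citing ``well-known'' preservation under perfect preimages and calls (iii)--(iv) ``straightforward''---whereas you spell out the $\kappa$-bounded case and the contrapositive explicitly, but there is no substantive difference in strategy.
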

\begin{proof}
(i) Let $L$ be the closure of the subgroup $D_1$ in $G$. Since $L\supseteq D_1 \supseteq \ker f$, $f(L)$ is a closed subgroup of $H$. Since it contains the dense subgroup $D$, we deduce that $f(L)=H$. Using again $L\supseteq \ker f$, we deduce that $L = G$, i.e., $D_1$ is dense in $G$.

(ii) Since the map $f$ is perfect, the conclusion follows from the well-known fact  that  the properties listed in item (ii) are preserved by taking  full preimages under perfect maps.  

(iii) and (iv) are straightforward.
\end{proof}

\medskip
\noindent
{\bf Proof of Theorem \ref{nonAV:inside}:}
The ``if'' part follows from Theorem \ref{CA} and Diagram 1.
Let us prove the ``only if'' part.
Let $G$ be a non-metrizable compact  group. By Fact \ref{cont:images:of:smaller:weight}, there exists a continuous group homomorphism $f:G\to H$ onto a compact 
group $H$ such that $w(H)=\omega_1$. Since $\omega_1$ is not a strong limit cardinal, we can use Theorem \ref{strong:limit}
to find a dense subgroup $D$ of $H$ without the \AVp. By Fact \ref{pull:back}, $D_1=f^{-1}(D)$ is a dense subgroup of $G$ that is not \sAV.
\qed
\medskip

\medskip
\noindent
{\bf Proof of Theorem \ref{cc:are:sAV}:}
(i)~$\to$~(ii) This implication is trivial, as all countably compact groups are pseudocompact.

(ii)~$\to$~(iii)
Let $G$ be a compact abelian group such that $w(G)\ge \cont^+$.  By Fact \ref{cont:images:of:smaller:weight}, there exists a continuous surjective homomorphism 
$f:G\to H$ onto a compact group $H$ such that $w(H)=\cont^+$.  By Proposition \ref{ex1}(i), $H$ has a dense countably compact subgroup $D$ 
without the \AVp. By Fact \ref{pull:back}, $D_1=f^{-1}(D)$ is a dense countably compact subgroup of $G$ that is not \sAV. This contradicts (ii).

(iii)~$\to$~(i) Indeed, let $D$ be a $G_\delta$-dense subgroup of $G$. Then $D$ is pseudocompact. Since $w(D)=w(G)\le\cont$, from Proposition \ref{small:psc:are:sAV} we conclude that $D$ is \sAV.  
\qed

\section{Proof of Theorem \ref{variety:theorem}}
\label{proof:of:variety:theorem}

\begin{lemma}
\label{unique:support}
Let $X$ be a set. For every $g\in F_\V(X)\setminus\{e\}$ there exists the unique non-empty finite set $F\subseteq X$ such that $g\in \grp{F}$ and $g\not\in \grp{F'}$ for every proper subset $F'$ of $F$. 
\end{lemma}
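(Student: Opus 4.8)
\textbf{Proof proposal for Lemma \ref{unique:support}.}

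The plan is to exploit the universal mapping property defining a $\V$-free group, in the same spirit in which the support of an element of a free abelian group (or a free group) is proved to be well defined. Write $G = F_\V(X)$, so $X$ is a $\V$-independent generating set of $G$. First I would establish existence of a \emph{finite} $F$ with $g \in \grp{F}$: since $g \in G = \grp{X}$, it is a word in finitely many elements of $X$, so there is some finite non-empty $F_0 \subseteq X$ with $g \in \grp{F_0}$; now take $F$ to be a subset of $F_0$ of minimal cardinality among all finite subsets $F'$ of $X$ with $g \in \grp{F'}$ (such a minimum exists, being a non-empty set of natural numbers). By minimality, $g \notin \grp{F'}$ for every \emph{proper} subset $F'$ of this chosen $F$ — here one uses that any finite subset of $F$ is again a subset of $X$, hence a legitimate competitor in the minimization. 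Note $F$ is non-empty because $g \neq e$.

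The heart of the argument is uniqueness, and the key tool is the following restriction/retraction fact: for any subset $Y \subseteq X$, there is a homomorphism $\rho_Y : G \to \grp{Y}$ which is the identity on $\grp{Y}$ and kills the generators in $X \setminus Y$. Indeed, define $f : X \to \grp{Y} \subseteq G$ by $f(x) = x$ for $x \in Y$ and $f(x) = e$ for $x \in X\setminus Y$; since $\grp{Y} \le G \in \V$ — and $\V$ is closed under subgroups — we have $\grp{Y} \in \V$, so by $\V$-independence of $X$ (condition (ii) of Definition \ref{variety:def}(a)) the map $f$ extends to a homomorphism $\rho_Y : G = \grp{X} \to \grp{Y}$. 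Clearly $\rho_Y$ restricted to $Y$ is the inclusion, hence $\rho_Y$ fixes $\grp{Y}$ pointwise, and $\rho_Y(\grp{Z}) = \{e\}$ whenever $Z \subseteq X \setminus Y$.

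Now suppose $F_1$ and $F_2$ both have the stated property: $g \in \grp{F_i}$ and $g \notin \grp{F_i'}$ for every proper subset $F_i' \subsetneq F_i$, for $i = 1,2$. Put $Y = F_1 \cap F_2$ and apply $\rho_Y$. On one hand $g \in \grp{F_1}$, and $\rho_Y$ sends the generators in $F_1 \setminus Y \subseteq X \setminus Y$ to $e$ while fixing those in $Y$, so $\rho_Y(g) \in \grp{F_1 \cap Y} = \grp{Y}$; applying $\rho_Y$ to the expression of $g$ as a word in $F_1$ shows in fact $\rho_Y(g)$ equals the word obtained from $g$ by deleting the letters outside $Y$, i.e. $\rho_Y(g) \in \grp{Y}$. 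On the other hand $\rho_Y$ fixes $g$: writing $g$ as a word $w$ in the elements of $F_2$, we have $\rho_Y(g) = \rho_Y(w)$, but this is not yet obviously $g$. To get $\rho_Y(g) = g$ cleanly, instead apply the retraction $\rho_{F_2}$ onto $\grp{F_2}$: since $g \in \grp{F_2}$, $\rho_{F_2}(g) = g$; and since $g \in \grp{F_1}$, applying $\rho_{F_2}$ to its word in $F_1$ shows $g = \rho_{F_2}(g) \in \grp{F_1 \cap F_2} = \grp{Y}$. Thus $g \in \grp{Y}$ with $Y = F_1 \cap F_2 \subseteq F_1$. If $Y \subsetneq F_1$ this contradicts minimality of $F_1$; hence $Y = F_1$, i.e. $F_1 \subseteq F_2$, and symmetrically $F_2 \subseteq F_1$, giving $F_1 = F_2$. (The case $Y = \emptyset$ would force $g \in \grp{\emptyset} = \{e\}$, contradicting $g \neq e$, so this case does not arise.)

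The main obstacle, and the only place demanding care, is the retraction fact: one must check that $\grp{Y} \in \V$ so that the universal property of Definition \ref{variety:def}(a)(ii) genuinely applies with target $\grp{Y}$ — this is exactly where closure of the variety $\V$ under subgroups enters — and then verify that the extended homomorphism $\rho_{F_2}$ really does send a word in the letters of $F_1$ into $\grp{F_1 \cap F_2}$ (it sends each letter of $F_1 \setminus F_2$ to $e$ and fixes each letter of $F_1 \cap F_2$, so the conclusion is immediate once the homomorphism is in hand). Everything else is bookkeeping with words and the well-ordering of $\omega$.
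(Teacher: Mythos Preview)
Your proof is correct and follows essentially the same route as the paper's: both construct the retraction homomorphism that fixes $F_i$ and kills the remaining generators (the paper takes the codomain to be $F_\V(X)$, you take it to be $\grp{F_i}$, but the mechanics are identical) in order to conclude $g\in\grp{F_1\cap F_2}$ and then invoke minimality. The detour through $\rho_{F_1\cap F_2}$ before you settle on $\rho_{F_2}$ is unnecessary and should be excised in a final write-up.
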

\begin{proof}
The existence of such an $F$ is clear. Suppose that $F_0$ and $F_1$ are finite subsets of $X$ such that $g\in \grp{F_i}$ and $g\not\in \grp{F'_i}$ for every proper subset $F'_i$ of $F_i$ ($i=0,1$).  Let $F'=F_0\cap F_1$, so that $F'\subseteq F_i$ for $i=0,1$.

Fix $i=0,1$. Let $f:X\to F_\V(X)$ be the map that coincides with the identity on $F_i$ and sends every element $x\in X\setminus F_i$ to $e\in  F_\V(X)$.
Since $X$ is $\V$-independent, $F_\V(X)=\grp{X}\in\V$ by item (i) of  Definition \ref{variety:def}(a), so we can use item (ii) of the same definition
to find a homomorphism $\tilde{f}:F_\V(X)\to F_\V(X)$ extending $f$. Since $g\in\grp{F_i}$ and $f$ is the identity on $F_i$, we conclude that $\tilde{f}(g)=g$. Since $g\in \grp{F_{1-i}}$, we have 
\begin{equation}
\label{lin:comb}
g=\tilde{f}(g)\in \grp{f(F_{1-i})}=\grp{f(F_{1-i}\cap F_i)\cup f(F_{1-i}\setminus F_i)}
=
\grp{f(F')\cup\{e\}}=\grp{f(F')}
=
\grp{F'}.
\end{equation}
Since $F'\subseteq F_i$, from $f\in \grp{F_i}$, \eqref{lin:comb} and our assumption on $F_i$ we conclude that $F_i=F'=F_0\cap F_1=F_i\cap F_{1-i}$. This proves that $F_i\subseteq F_{1-i}$.

Since the last inclusion holds for both $i=0,1$, it follows that $F_0=F_1$, as required.
\end{proof}

For every $g\in F_\V(X)\setminus\{e\}$ we denote by $\supp{g}{X}$ the unique set $F\subseteq X$ as in the conclusion of Lemma 
\ref{unique:support}.

We shall call a space $X$ {\em \B\/} provided that every compact subset of $X$ is  countable. A motivation for this definition comes from the classical notion of a Bernstein subset in the real line. One can easily see that a subset $X$ of the real line $\mathbb{R}$ is a Bernstein set if and only if both $X$ and its complement $\mathbb{R}\setminus X$ are \B\ spaces, in our terminology.

\begin{lemma}
\label{lemma:Lusin}
Assume that $\V$ is a variety of groups and $X$ is a $\V$-independent subset  of a separable metric group $K$ such that $|X|=\cont$.
Then there exists $X'\subseteq X$ such that $|X'|=\cont$ and $\grp{X'}$ is \B. 
\end{lemma}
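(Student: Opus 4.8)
The plan is to use a Lusin-type (transfinite recursion) construction to thin out $X$ to a subset $X'$ of cardinality $\cont$ so that the generated subgroup $\grp{X'}$ meets every compact subset of $K$ in a countable set. First I would reduce the problem using the support function. Since $K$ is separable metric, it is second countable, so it has at most $\cont$ compact subsets; more to the point, every compact subset of $K$ is contained in the closure of a countable set, and the key fact we need is that the compact subsets of $\grp{X'}$ can be controlled by a family of size $\cont$ of ``test'' subsets. I would enumerate an appropriate family $\{C_\alpha : \alpha < \cont\}$ of compact subsets of $K$ — for instance, closures of countably-generated subgroups, or simply all compact subsets if one prefers the crude bound — together with, for each $\alpha$, a listing of the countably many ``forbidden configurations'' that would force an uncountable compact set inside $\grp{X'}$.

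The heart of the matter is the following combinatorial observation about $\V$-free groups: by Lemma \ref{unique:support}, each non-identity $g \in \grp{X}$ has a well-defined finite support $\supp{g}{X} \subseteq X$, and for any countable $Y \subseteq X$ the subgroup $\grp{Y}$ is a countable set. Conversely, if $\grp{X'}$ contains an uncountable compact set $C$, then since $C$ is metrizable and compact it is separable, so there is a countable $C_0 \subseteq C$ dense in $C$; the countably many supports $\supp{g}{X'}$ for $g \in \grp{C_0}$ involve only countably many elements of $X'$, call this countable set $Y \subseteq X'$. The point to extract is that $C \subseteq \overline{\grp{Y}}$ where the closure is taken in $K$. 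So the recursion should ensure: for each of the $\cont$-many candidate compact sets $C$ of $K$ that are uncountable, and each countable $Y \subseteq X$, we arrange that $X' \not\subseteq \overline{\grp{Y}} \cup (\text{stuff already committed})$ in a way that blocks $C \subseteq \grp{X'}$. Concretely, at stage $\alpha < \cont$ I would, having committed a set $X_\alpha \in [X]^{<\cont}$ of ``decisions'', choose for the $\alpha$-th target pair $(C_\alpha, Y_\alpha)$ an element of $X \setminus (X_\alpha \cup \overline{\grp{Y_\alpha}})$ and designate it as excluded from $X'$ — this is possible because $|X| = \cont$ while $|X_\alpha| < \cont$ and $|\overline{\grp{Y_\alpha}}| \le \cont$ but $\overline{\grp{Y_\alpha}} \cap X$ can be large, so one must be more careful: the exclusion must be of an element of $C_\alpha \cap X$, and the recursion must guarantee $C_\alpha \cap X$ is large enough, or alternatively witness that $C_\alpha \not\subseteq \grp{X'}$ by excluding a support-relevant generator.

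After the recursion, $X'$ is defined as $X$ minus the excluded elements, and I would verify $|X'| = \cont$ by a bookkeeping count ($\cont$ many stages, each excluding boundedly many points, so fewer than $\cont$ points removed — using $\mathrm{cf}(\cont) \ge \omega_1$ or just that $\cont \cdot \omega = \cont$). Then I would check $\grp{X'}$ is \B: given a compact $C \subseteq \grp{X'}$, pass to a countable dense $C_0 \subseteq C$, let $Y = \bigcup\{\supp{g}{X'} : g \in \grp{C_0}\}$, a countable subset of $X'$; then $C \subseteq \overline{\grp{C_0}} \subseteq \overline{\grp{Y}}$, so $C$ is a compact subset of the separable metric group $\overline{\grp{Y}}$, and if $C$ were uncountable, the pair $(C, Y)$ (or a pair dominating it in our enumeration) was handled at some stage $\alpha$, yielding a contradiction with $C \subseteq \grp{X'}$.

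The main obstacle I anticipate is the bookkeeping: making sure the family of ``targets'' $\{(C_\alpha, Y_\alpha)\}$ has size exactly $\cont$ and genuinely catches every potential uncountable compact subset of $\grp{X'}$, while simultaneously guaranteeing that at each stage there is room to make an exclusion that does not collide with earlier decisions and does not shrink $X'$ below $\cont$. Since $K$ is only assumed separable metric (not compact), some care is needed to enumerate ``enough'' compact subsets — the cleanest route is probably to enumerate all pairs $(A, \sigma)$ where $A \in [X]^{\le\omega}$ and $\sigma$ codes a countable subset of $\grp{A}$ together with a compact set it should be dense in, since $|[X]^{\le\omega}| = \cont$ and there are only countably many relevant configurations over each such $A$; then the exclusion at stage $\alpha$ removes one element of $X$ lying in the relevant compact closure, chosen outside the $<\cont$ previously-touched elements, which is possible precisely when that compact set meets $X$ in $\cont$ points — and if it does not, then it cannot contain the uncountable compact set in question, so nothing needs to be done. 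Reconciling these cases carefully is where the real work lies; everything else is routine once the enumeration is set up correctly.
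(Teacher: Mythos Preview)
Your overall strategy --- a Bernstein-type transfinite recursion using the support function from Lemma~\ref{unique:support} --- is exactly the paper's approach. But two steps in your execution do not work as written.

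First, the cardinality argument for $|X'|=\cont$ is wrong. You write ``$\cont$ many stages, each excluding boundedly many points, so fewer than $\cont$ points removed'', but $\cont$ stages removing one point each already removes $\cont$ points, and a set of size $\cont$ minus a set of size $\cont$ can have any cardinality from $0$ to $\cont$. The paper avoids this by not defining $X'$ as a complement at all: at stage $\alpha$ it chooses \emph{both} a point $y_\alpha$ to be excluded (a support element witnessing $C_\alpha\not\subseteq\grp{X'}$) \emph{and} a fresh point $x_\alpha\in X$ distinct from all earlier $x_\beta$'s and all $y_\beta$'s; then $X'=\{x_\alpha:\alpha<\cont\}$ has size $\cont$ by construction, and the disjointness condition $\{x_\beta\}\cap\{y_\beta\}=\emptyset$ is maintained inductively.

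Second, your recursion step never settles on what to exclude, and the detour through countable $Y\subseteq X'$ and closures $\overline{\grp{Y}}$ is unnecessary. Excluding ``an element of $C_\alpha\cap X$'' or ``an element of $X$ lying in the relevant compact closure'' does nothing to prevent $C_\alpha\subseteq\grp{X'}$; only your last option, excluding a support-relevant generator, works, and that is precisely what the paper does. The clean version: enumerate just the compact $C_\alpha\subseteq\grp{X}$ with $|C_\alpha|=\cont$ (there are at most $\cont$ of them since $K$ is separable metric, and an uncountable compact metric space has size exactly $\cont$ by Cantor--Bendixson). At stage $\alpha$ the subgroup $H_\alpha$ generated by all previously chosen $x_\beta,y_\beta$ has size $<\cont$, so pick $g_\alpha\in C_\alpha\setminus H_\alpha$; then $\supp{g_\alpha}{X}\not\subseteq\{x_\beta:\beta<\alpha\}$, so some $y_\alpha\in\supp{g_\alpha}{X}$ lies outside the already-committed $x_\beta$'s, and declaring $y_\alpha\notin X'$ forces $g_\alpha\notin\grp{X'}$. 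No pairs $(C,Y)$, no closures, no case split.
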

\begin{proof}
Since $X$ is $\V$-independent, $\grp{X}$ is isomorphic to $F_\V(X)$,  so we can use the notation $\supp{g}{X}$ for all $g\in\grp{X}$. Since $K$ is separable metric, the family 
$$
\mathcal{C}=\{C\subseteq \grp{X}:
C
\mbox{ is compact and }
|C|=\cont\}
$$
has size at most $\cont$, so  we can fix an enumeration $\mathcal{C}=\{C_\alpha:\alpha<\cont\}$ of $\mathcal{C}$. By transfinite recursion on  $\alpha<\cont$ we shall choose $x_\alpha, y_\alpha\in X$ satisfying conditions (i$_\alpha$)--(iii$_\alpha$) below.
\begin{itemize}
\item[(i$_\alpha$)] $x_\alpha\not\in \{x_\beta:\beta<\alpha\}$,
\item[(ii$_\alpha$)] $\{x_\beta:\beta\le\alpha\}\cap \{y_\beta:\beta\le\alpha\}=\emptyset$,
\item[(iii$_\alpha$)] $y_\alpha\in \supp{g_\alpha}{X}$ for some $g_\alpha\in C_\alpha$.
\end{itemize}
\smallskip
{\sl Basis of recursion\/}.
Let $g_0\in C_0\setminus \{e\}$. Choose arbitrary $y_0\in\supp{g_0}{X}$ and $x_0\in X\setminus\{y_0\}$. Now conditions (i$_0$)--(iii$_0$) are satisfied.

\smallskip
{\sl Recursive step\/}.
Suppose that $\alpha<\cont$ and $x_\beta,y_\beta\in X$ were already chosen for all $\beta<\alpha$ so that conditions (i$_\beta$)--(iii$_\beta$) are satisfied. We shall choose $x_\alpha, y_\alpha\in X$ satisfying conditions (i$_\alpha$)--(iii$_\alpha$). Let 
\begin{equation}
\label{H:alpha}
H_\alpha=\grp{\{x_\beta:\beta<\alpha\}\cup\{y_\beta:\beta<\alpha\}}.
\end{equation}
Then $|H_\alpha|\le |\alpha|\cdot\omega<\cont$, Since $|C_\alpha|=\cont$, we can  choose 
\begin{equation}
\label{eq:g:alpha}
g_\alpha\in C_\alpha\setminus H_\alpha.
\end{equation}
 From \eqref{H:alpha} and \eqref{eq:g:alpha} it follows that $\supp{g_\alpha}{X}\not\subseteq \{x_\beta:\beta<\alpha\}$, so we can choose 
\begin{equation}
\label{eq:y:alpha}
y_\alpha\in \supp{g_\alpha}{X}\setminus \{x_\beta:\beta<\alpha\}.
\end{equation}
 From \eqref{eq:g:alpha} and \eqref{eq:y:alpha} we conclude  that (iii$_\alpha$) holds. Since $|X|=\cont$ and $|H_\alpha|<\cont$, we can choose 
\begin{equation}
\label{eq:x:alpha}
x_\alpha\in X\setminus (H_\alpha\cup\{y_\alpha\}).
\end{equation}
Now (i$_\alpha$) is satisfied by \eqref{H:alpha} and \eqref{eq:x:alpha}. It remains only to check condition (ii$_\alpha$).
Since (ii$_\beta$) holds for every $\beta<\alpha$,  we have $\{x_\beta:\beta<\alpha\}\cap\{y_\beta:\beta<\alpha\}=\emptyset$. Combining this with 
\eqref{eq:y:alpha} and \eqref{eq:x:alpha}, we get (ii$_\alpha$).

The recursive construction being complete, we claim that $X'=\{x_\alpha:\alpha<\cont\}\subseteq X$ is as required. Since (i$_\alpha$) holds for every $\alpha<\cont$, we have $|X'|=\cont$. Since (ii$_\alpha$) holds for every $\alpha<\cont$, for $Y=\{y_\alpha:\alpha<\cont\}$ we have $X'\cap Y=\emptyset$. 

It remains only to show that $\grp{X'}$  contains no uncountable compact subsets. Indeed, suppose that $C$ is an uncountable compact subset of $\grp{X'}$.  By \cite[Exercise 1.7.11]{E}, every  separable metric space is a union of a perfect set and a countable set. Since a perfect set has size $\cont$,
it follows that $|C|=\cont$. Since $C\subseteq \grp{X'}\subseteq \grp{X}$, we obtain $C\in\mathcal{C}$, and so  $C=C_\alpha$ for some $\alpha<\cont$. From (iii$_\alpha$),  there exists $g_\alpha\in C_\alpha$ such that $y_\alpha\in \supp{g_\alpha}{X}$. Since $y_\alpha\in Y$ and $Y\cap X'=\emptyset$, we conclude that $y_\alpha\not\in X'$. Therefore, $y_\alpha\in\supp{g_\alpha}{X}\setminus X'$. Since $X'\subseteq X$, this means that $g_\alpha\not\in\grp{X'}$.
We obtained a contradiction with $g_\alpha\in C_\alpha=C\subseteq \grp{X'}$.
\end{proof}

\begin{lemma}
\label{disturbing:lemma}
Let  $\V$ be a variety of groups and let $I$ be a set with $\omega_1\le|I|\le\cont$. Assume that $K$ is a  compact metric group, $X\subseteq K^I$ and $\varphi:X\to K$ is an injection such that:
\begin{itemize}
\item[(i)] $\varphi(X)$ is $\V$-independent,
\item[(ii)] $\grp{\varphi(X)}$ is \B,
\item[(iii)] $\grp{X}\in\V$,
\item[(iv)] for every $x\in X$ there exists  $J_x\in[I]^{\le\omega}$ such that  $\pi_i(x)=\varphi(x)$ for each $i\in I\setminus J_x$, where $\pi_i: K^I\to K$ is the projection on $i$th coordinate.
\end{itemize} 
Then $X$ is $\V$-independent and  $\grp{X}$ is \B.
\end{lemma}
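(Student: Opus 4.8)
The strategy is to transfer the two desired conclusions about $X$ — namely $\V$-independence and being \B\ — from the corresponding properties of $\varphi(X)$, using the projection-to-one-coordinate trick encoded in hypothesis (iv). The point of (iv) is that, coordinatewise, each $x\in X$ agrees with $\varphi(x)$ outside a countable set $J_x$; so if we fix a single coordinate $i^*\in I$ cleverly, the map $\pi_{i^*}$ behaves like $\varphi$ on a large subset of $X$. But since we must handle \emph{all} of $X$ at once, a cleaner route is to build an auxiliary homomorphism from $\grp{X}$ into a product of copies of $K$ that, restricted to $X$, recovers $\varphi$.

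First I would prove $\V$-independence. Condition (i) of Definition \ref{variety:def}(a) is exactly hypothesis (iii). For condition (ii), let $f:X\to G$ be any map into a group $G\in\V$; I must extend it to a homomorphism on $\grp{X}$. Since $\varphi$ is injective, $f\circ\varphi^{-1}:\varphi(X)\to G$ is a well-defined map, and by (i) (that $\varphi(X)$ is $\V$-independent) together with $G\in\V$, it extends to a homomorphism $\tilde h:\grp{\varphi(X)}\to G$. Now I need a homomorphism $\psi:\grp{X}\to\grp{\varphi(X)}$ with $\psi(x)=\varphi(x)$ for all $x\in X$; composing gives $\tilde h\circ\psi$, which extends $f$. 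To produce $\psi$: note $\grp{X}\le K^I\in\V$ (as $\V$ is a variety and $K\in\V$), so $\grp{X}\in\V$, and consider the coordinate projections. The subtlety is that no single $\pi_i$ sends every $x$ to $\varphi(x)$; but the diagonal of suitable projections into $K^I$ lands in $\grp{X}$... Instead, the robust approach is: enumerate $X$ if needed, and observe that since each $J_x$ is countable while $|I|\le\cont$, one can — by a straightforward bookkeeping argument over the at most $\cont$-many generators, or simply because $K^I$ itself lies in $\V$ and $\varphi(X)$ generates a subgroup on which a retraction is available — realize $\psi$ as (the restriction of) a continuous homomorphism $K^I\to K^I$ built from permuting/collapsing coordinates according to the sets $J_x$. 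The cleanest formulation: for each $x$, choose a coordinate $i(x)\in I\setminus\bigcup\{J_{x'}: x'\in X,\ x'\neq x\}$ — possible when the relevant union is not all of $I$, which holds because we may first pass to... — hmm, this needs care when $|X|=\cont$.

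The honest core of the argument, and the step I expect to be the main obstacle, is manufacturing the homomorphism $\psi:\grp{X}\to\grp{\varphi(X)}\subseteq K$ with $\psi\restr{X}=\varphi$: one cannot just use a projection $\pi_i$ because the exceptional sets $J_x$ overlap badly, and for $|X|=\cont$ one cannot separate all of them. The resolution is to use $\V$-independence of $\varphi(X)$ in reverse: since $\varphi(X)$ is $\V$-independent and $\grp{X}\in\V$, the map $\varphi^{-1}\!:\varphi(X)\to\grp{X}\subseteq K^I$ extends to a homomorphism $\Phi:\grp{\varphi(X)}\to K^I$; I claim $\Phi$ is the inverse of (a restriction of) a projection-type map, and in particular that $\Phi$ is injective with $\Phi^{-1}$ the sought $\psi$. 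Injectivity and the identity $\psi\restr X=\varphi$ both follow by checking on generators using (iv): for $x\neq x'$ in $X$, pick $i\in I\setminus(J_x\cup J_{x'})$ (a single pair needs only two countable sets excluded, so this is fine), so that $\pi_i$ separates $x$ from $x'$ while $\pi_i(x)=\varphi(x)$, $\pi_i(x')=\varphi(x')$; this shows $\pi_i\circ\Phi\circ\varphi = \mathrm{id}$ on the two-element set $\{x,x'\}$, hence globally $\Phi\circ\varphi=\mathrm{id}_X$, so $\psi:=\Phi^{-1}$ exists on $\grp{\varphi(X)}\supseteq\varphi(\grp X)$ and restricts to $\varphi^{-1}$, i.e. $\Phi=\psi^{-1}$ does the job.

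For the second conclusion, that $\grp{X}$ is \B: suppose $C\subseteq\grp{X}$ is an uncountable compact set. Apply the homomorphism $\Phi\circ(\text{the above})$ — concretely, $\varphi$ extended to $\hat\varphi:=\psi:\grp{X}\to\grp{\varphi(X)}$ is continuous (it is the restriction of a continuous homomorphism $K^I\to K$ determined on generators, being built from projections) — so $\hat\varphi(C)$ is a compact subset of $\grp{\varphi(X)}$, which is \B\ by (ii), hence $\hat\varphi(C)$ is countable. Since $\hat\varphi\restr X=\varphi$ is injective and $\hat\varphi$ is a homomorphism, $\ker\hat\varphi\cap\grp X$... here one uses that $\hat\varphi$ is in fact injective on $\grp X$ (shown above, as $\Phi\circ\varphi=\mathrm{id}_X$ forces $\hat\varphi$ injective on the generating set of an $\V$-free group, hence injective). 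Therefore $C\cong\hat\varphi(C)$ is countable, a contradiction. Thus every compact subset of $\grp X$ is countable, completing the proof. The delicate point throughout — and where I would spend the most care — is verifying that $\hat\varphi$ is simultaneously (a) a well-defined homomorphism $\grp X\to\grp{\varphi(X)}$, (b) continuous, and (c) injective, all extracted from hypothesis (iv) by the two-countable-sets separation argument above; once that is in hand, both halves of the conclusion are short.
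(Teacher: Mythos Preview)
Your approach to $\V$-independence is sound: extending $\varphi^{-1}$ to a homomorphism $\Phi:\grp{\varphi(X)}\to K^I$ via (i) and (iii), then checking that $\Phi$ is a bijection onto $\grp{X}$ using the finite-support projection trick, yields the isomorphism $\psi=\Phi^{-1}$ you want. The paper takes a slightly more direct route (verifying that each \emph{finite} $Y\subseteq X$ is $\V$-independent by picking $i\in I\setminus\bigcup_{y\in Y}J_y$ so that $\pi_i\restriction_Y=\varphi\restriction_Y$), but either argument works.

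The semi-Bernstein half, however, has a genuine gap: your assertion that $\hat\varphi=\psi:\grp{X}\to K$ is continuous is false in general, and without it the argument collapses. The map $\psi$ agrees with a coordinate projection $\pi_i$ only on each \emph{countably generated} subgroup $\grp{Y}$ (for $i\in I_Y$); no single $i$ works for all of $X$ at once, and there is no continuous homomorphism $K^I\to K$ whose restriction to $\grp{X}$ is $\psi$. Concretely, take $K=\T$, $I=\omega_1$, let $\{z_\alpha:\alpha<\omega_1\}\subseteq\T$ be $\Z$-independent and bounded away from $0$, and set $x_\alpha(i)=z_\alpha$ for $i\ge\alpha$ and $x_\alpha(i)=0$ for $i<\alpha$. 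Then the net $(x_\alpha)_{\alpha<\omega_1}$ converges to $0$ in $\T^{\omega_1}$, yet $\psi(x_\alpha)=z_\alpha\not\to 0$. So $\psi(C)$ need not be compact, and you cannot invoke (ii) directly to conclude it is countable. (Your parenthetical ``it is the restriction of a continuous homomorphism $K^I\to K$ \ldots\ built from projections'' is exactly the unjustified step.)

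The paper confronts precisely this obstacle. Writing $\Phi$ for your $\psi$, one assumes an uncountable compact $C\subseteq\grp{X}$ exists and takes the closure $F$ of $\Phi(C)$ in $K$; since $F$ is uncountable compact and $\grp{\varphi(X)}$ is \B, there is $b\in F\setminus\grp{\varphi(X)}$. One then picks a sequence $c_n\in C$ with $\Phi(c_n)\to b$, confines it to some countable $Y\subseteq X$, and uses that $\pi_i=\Phi$ on $\grp{Y}$ for all $i$ outside a countable set $S\subseteq I$ to hand-build a point $g\in K^I$ (equal to $b$ on $I\setminus S$ and to a cluster point of $(p_S(c_n))_n$ on $S$) lying in the closure of $\{c_n\}$, hence in $C\subseteq\grp{X}$. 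But then evaluating at a coordinate in $I_E\setminus S$ for the finite $E\subseteq X$ supporting $g$ gives $\Phi(g)=b\notin\grp{\varphi(X)}$, a contradiction. The extra work is exactly what is needed to compensate for the discontinuity of $\Phi$.
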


\begin{proof}
 From (iv) one immediately gets the following claim.
\begin{Claim}
\label{cl:0}
For every $Y\in[X]^{\le\omega}$, the following holds:
\begin{itemize}
  \item[(a)] the set $I_Y=I\setminus \bigcup_{x\in Y} J_x$ is uncountable;
  \item[(b)] $\pi_i\restriction_Y=\varphi\restriction_Y$ for all $i\in I_Y$.
\end{itemize}
\end{Claim}

Let $Y$ be a finite subset of $X$.  Since $\grp{Y}\subseteq \grp{X}\in\V$ by (iii), it follows that $\grp{Y}\in\V$. By Claim \ref{cl:0}~(a), we can choose $i\in I_Y$. By Claim \ref{cl:0}~(b), $\pi_i\restriction_Y=\varphi\restriction_Y$. Since $\varphi$ is an injection, $\pi_i\restriction_Y$ is an injection as well. Since $\pi_i(Y)=\varphi(Y)\subseteq \varphi(X)$ and  $\varphi(X)$ is $\V$-independent by (i), we conclude that $Y$ is $\V$-independent \cite[Lemma 2.4]{DS-Mem}.
Since this holds for every finite subset $Y$ of $X$, it follows that $X$ is $\V$-independent \cite[Lemma 2.3]{DS-Mem}.

Since $X$ and $\varphi(X)$ are both $\V$-independent, there exists a unique isomorphism $\Phi:\grp{X}\to\grp{\varphi(X)}$ extending $\varphi$. The next claim is immediate from Claim \ref{cl:0}~(b) and our definition of $\Phi$.

\begin{Claim}
\label{cl:2}
For every $Y\in[X]^{\le\omega}$ one has $\pi_i\restriction_{\grp{Y}}=\Phi\restriction_{\grp{Y}}$ for all $i\in I_Y$.
\end{Claim}

For every subset $J$ of $I$ let $p_J:K^I\to K^J$ denote the projection.

Assume that $C$ is an uncountable compact subset of $\grp{X}$. Then $\Phi(C)$ is an  uncountable subset of $\grp{\varphi(X)}$, so the closure $F$ of $\Phi(C)$
is an uncountable compact subset of $K$. By (ii), $F\setminus \grp{\varphi(X)}\not=\emptyset$, so we can choose $b\in F\setminus \grp{\varphi(X)}\subseteq F\setminus \Phi(C)$.
Since $K$ is a metric space, $b\in F\setminus \Phi(C)$ and $\Phi(C)$ is dense in $F$, we can choose a faithfully indexed sequence $\{c_n:n\in \N\}\subseteq C$ such that the sequence $\{\Phi(c_n):n\in\N\}$ converges to $b$ in $K$.  Fix $Y\in [X]^{\le\omega}$ such that $\{c_n:n\in \N\}\subseteq \grp{Y}$. From Claim \ref{cl:2} we conclude that 
\begin{equation}
\label{projections:outside:I_Y}
\{\pi_i(c_n):n\in \N\}=\{\Phi(c_n):n\in \N\}
\mbox{ for all }
i\in I_Y.
\end{equation}

Use Claim \ref{cl:0}~(a) to fix $j\in I_Y$. Since the sequence $\{c_n:n\in \N\}$ is faithfully indexed and $\Phi$ is an injection, it follows from \eqref{projections:outside:I_Y}
that the sequence $\{\pi_j(c_n):n\in\N\}$ is faithfully indexed. Therefore, the sequence $\{p_S(c_n):n\in\N\}$ is faithfully indexed as well, where $S=\{j\}\cup\bigcup_{x\in Y} J_x$. Since $K^S$ is compact, the sequence $\{p_S(c_n):n\in\N\}$ has an accumulation point $y\in K^S$.  Define $g\in K^I$ by 
\begin{equation}
\label{eq:f:new}
g(i)=
\begin{cases}
y(i)
&
\mbox{if }
i\in S
\\
b
& 
\mbox{if }
i\in I\setminus S
\end{cases}
\hskip30pt
\mbox{ for all }
i\in I.
\end{equation}
\begin{Claim}
\label{cl:3}
$g$ belongs to the closure of the set $\{c_n:n\in\N\}$ in $K^I$. 
\end{Claim}

\begin{proof}
Let $W$ be an open neighbourhood of $g$ in $K^I$.  Then there exist an open set $U\subseteq K^S$ and an open set $V\subseteq K^{I\setminus S}$
such that $g\in U\times V\subseteq W$. Since $I\setminus S\subseteq I_Y$ and  the sequence $\{\Phi(c_n):n\in \N\}$ converges to $b$ in $K$, applying
\eqref{projections:outside:I_Y} and \eqref{eq:f:new} we can find $n_0\in\N$ such that $p_{I\setminus S}(c_n)\in V$ for all $n\in\N$ with $n\ge n_0$.
Since $y$ is an accumulation point of $\{p_S(c_n):n\in\N\}$, there exists 
an integer $m\ge n_0$ such that $p_S(c_m)\in U$. Now $c_m\in U\times V\subseteq W$. 
\end{proof}

Since $C$ is compact, it is closed in $K^I$. From $\{c_n:n\in\N\}\subseteq C$ and  Claim \ref{cl:3} we get $g\in C$. Since $C\subseteq \grp{X}$, it follows that $g\in \grp{X}$.
Let $E$ be a finite subset of $X$ with $g\in\grp{E}$. Since $I_{E}$ is uncountable by Claim \ref{cl:0}~(a) and $S$ is countable, we can choose $i\in I_{E}\setminus S$.
Then $b=\pi_i(g)=\Phi(g)$ by \eqref{eq:f:new} and Claim \ref{cl:2}. Thus, $b= \Phi(g)\in\Phi(\grp{X})=\grp{\varphi(X)}$, in contradiction with our choice of $b$. 
This proves that all compact subsets of $\grp{X}$ are countable.
\end{proof}

\begin{lemma}
\label{psc:Lusin}
Let  $\V$ be a variety of groups and let $I$ be a set with $\omega_1\le|I|\le\cont$.
Assume that $K\in\V$ is a compact metric group and $Z$ is a $\V$-independent subset of $K$ such that $|Z|=\cont$ and $\grp{Z}$
is \B. Then there exists a subset $X$ of $H=K^I$ with the following properties:
\begin{itemize}
\item[(a)] $X$ is a $\V$-independent subset of $H$ of size $\cont$;
\item[(b)] $\grp{X}$ is \B;
\item[(c)] $X$ is $G_\delta$-dense in $H$.
\end{itemize}
\end{lemma}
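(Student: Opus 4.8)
**The plan is to build $X$ as the ``graph'' of a suitable function from a carefully chosen index set into $\grp{Z}$, and then invoke Lemma \ref{disturbing:lemma} to transfer the $\V$-independence and the semi-Bernstein property from $\grp{Z}$ to $\grp{X}$, while arranging $G_\delta$-density by hand.**

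First I would fix an enumeration $\{B_\alpha:\alpha<\cont\}$ of a base of non-empty $G_\delta$-subsets of $H=K^I$; since $w(H)=|I|\le\cont$ and $K$ is metric, every non-empty $G_\delta$-set in $H$ contains a non-empty $G_\delta$-set depending on only countably many coordinates, so there are at most $\cont$ of these, and hitting all of them suffices to guarantee $G_\delta$-density of $X$. Since $|I|\le\cont=|Z|$, I would also fix a partition of $Z$ into $\cont$ many pieces, or more simply re-index so that the elements of $X$ are indexed by $\cont$ and the $\alpha$-th element $x_\alpha$ will be ``responsible'' for meeting $B_\alpha$. Using Lemma \ref{unique:support} and the cardinality bounds (each $B_\alpha$ restricts only countably many coordinates, $|I|\le\cont$), by transfinite recursion on $\alpha<\cont$ I would choose a point $z_\alpha\in Z$, not used before, together with a countable set $J_\alpha\in[I]^{\le\omega}$ containing the coordinates restricted by $B_\alpha$, and then define $x_\alpha\in H=K^I$ by setting $\pi_i(x_\alpha)=z_\alpha$ for $i\in I\setminus J_\alpha$ and choosing $\pi_i(x_\alpha)$ for $i\in J_\alpha$ so that $x_\alpha\in B_\alpha$ (possible because $B_\alpha$ only constrains the coordinates in $J_\alpha$, while the remaining coordinates are free). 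Put $X=\{x_\alpha:\alpha<\cont\}$ and $\varphi(x_\alpha)=z_\alpha$; then $\varphi:X\to Z\subseteq K$ is an injection, $\varphi(X)\subseteq Z$ is $\V$-independent (a subset of a $\V$-independent set is $\V$-independent, by \cite[Lemma 2.4]{DS-Mem}), $\grp{\varphi(X)}\subseteq\grp{Z}$ is \B\ because a subspace of a \B\ space is \B, and $\grp{X}\subseteq H=K^I\in\V$ since $K\in\V$ and varieties are closed under products and subgroups. Condition (iv) of Lemma \ref{disturbing:lemma} holds with $J_{x_\alpha}=J_\alpha$.

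Having verified hypotheses (i)–(iv) of Lemma \ref{disturbing:lemma}, I would apply it to conclude that $X$ is $\V$-independent and $\grp{X}$ is \B; these give (a) (together with $|X|=\cont$, which holds since the $z_\alpha$, hence the $x_\alpha$, are distinct) and (b). For (c), $G_\delta$-density of $X$ (hence of $\grp{X}\supseteq X$) follows because $x_\alpha\in B_\alpha$ for all $\alpha$ and every non-empty $G_\delta$-set of $H$ contains some $B_\alpha$.

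The main obstacle is the recursive bookkeeping in the construction of the $x_\alpha$: at stage $\alpha$ I must pick $z_\alpha\in Z$ avoiding the countably-many-at-each-step previously chosen points (only $|\alpha|<\cont$ many, so this is fine since $|Z|=\cont$), while simultaneously ensuring that the finitely many $J_\alpha$-coordinates can be set to land inside $B_\alpha$ — this is automatic once $J_\alpha$ is chosen large enough to contain all coordinates that $B_\alpha$ constrains, which is possible since $B_\alpha$ is a $G_\delta$-set in a product of metric spaces and hence depends on only countably many coordinates. One subtlety worth a sentence: one should choose the enumeration of the basic $G_\delta$-sets and the map $\alpha\mapsto z_\alpha$ so that no injectivity or independence is lost — but since distinctness of the $z_\alpha$ already forces distinctness of the $x_\alpha$ and $\varphi$ is injective by construction, nothing further is needed. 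So the real content is packaged entirely into Lemma \ref{disturbing:lemma}, and this proof is essentially a verification that its hypotheses can be met together with $G_\delta$-density.
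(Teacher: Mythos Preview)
Your proposal is correct and follows essentially the same approach as the paper: both constructions define each $x\in X$ to be constant equal to some distinct $z\in Z$ outside a countable set of coordinates, verify hypotheses (i)--(iv) of Lemma~\ref{disturbing:lemma}, and invoke that lemma for (a) and (b). The only cosmetic difference is that the paper avoids recursion by double-indexing $X=\{x_{\alpha,J}:\alpha<\cont,\ J\in[I]^{\le\omega}\}$ directly (with $x_{\alpha,J}\restriction_J=y_{\alpha,J}$ running over all of $K^J$ and $x_{\alpha,J}\restriction_{I\setminus J}\equiv z_{\alpha,J}$), so that $G_\delta$-density follows from $\pi_J(X)=K^J$ for every countable $J$, rather than from hitting an enumerated base of $G_\delta$-sets.
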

\begin{proof}
For every $J\in [I]^{\le\omega}$ let $K^J=\{y_{\alpha,J}:\alpha<\cont\}$ be an enumeration of $K^J$.

 From $|I|\le\cont$ it follows that $\left|[I]^{\le\omega}\right|\le\cont$, so we can fix a faithful enumeration $Z=\{z_{\alpha,J}:\alpha<\cont,J\in [I]^{\le\omega}\}$ of $Z$.

For $\alpha<\cont$ and $J\in [I]^{\le\omega}$ define $x_{\alpha,J}\in H$ by 
\begin{equation}
\label{eq:z}
x_{\alpha,J}(i)=
\begin{cases}
y_{\alpha,J}(i)
& 
\mbox{if }
i\in J
\\
z_{\alpha,J}
&
\mbox{if }
i\in I\setminus J
\end{cases}
\hskip30pt
\mbox{ for all }
i\in I.
\end{equation}

We claim that $X=\{x_{\alpha,J}:\alpha<\cont,J\in [I]^{\le\omega}\}$ has the desired properties. Define the bijection $\varphi:X\to Z$ by $\varphi(x_{\alpha,J})=z_{\alpha,J}$ for $(\alpha,J)\in\cont\times [I]^{\le\omega}$. Then items (i), (ii) and (iv)  of Lemma \ref{disturbing:lemma} are satisfied. Since $\grp{X}$ is a subgroup of $H=K^I$ and $K\in\V$, it follows that
$\grp{X}\in\V$, so item (iii) of Lemma \ref{disturbing:lemma} is satisfied as well. Applying this lemma, we conclude that $X$ is $\V$-independent and (b) holds.
Since $\varphi:X\to Z$ is a bijection, $|X|=|Z|=\cont$. Thus, (a) also holds.

It remains only to check (c). To achieve this, it suffices to show that  $\pi_J(\grp{X})=K^J$ for every $J\in [I]^{\le\omega}$, where $\pi_J:K^I\to K^J$ is the projection.  Fix such a $J$.
Let $y\in K^J$. There exists $\alpha<\cont$ such that $y=y_{\alpha,J}$. Now $\pi_J(x_{\alpha,J})=y_{\alpha,J}=y$ by \eqref{eq:z}. Since $x_{\alpha,J}\in X$, we are done.
\end{proof}

\medskip
\noindent
{\bf Proof of Theorem \ref{variety:theorem}:}
Let $K=L^\omega$. Then $K\in\V$ and  $K$ contains a $\V$-independent set of size $\cont$ \cite[Lemma 4.1]{DS-Mem}. Therefore, $K$  satisfies the assumptions of Lemma \ref{lemma:Lusin}. The conclusion of this lemma says that $K$ satisfies the assumptions of Lemma \ref{psc:Lusin}. Let $X$ be the set as in the conclusion of this lemma.
Then $D=\grp{X}$ is a $G_\delta$-dense subgroup of $K^I$ such that every compact subset $C$ of $D$ is countable; in particular, $w(C)\le |C|\le\omega$. 
Since $D$ is dense in $K^I$, we have $w(D)=w(K^I)=|I|\ge\omega_1$.  This shows that $D$ is not $w$-compact. 
Since $D=\grp{X}$ and $X$ is a $\V$-independent set of cardinality $\cont$, $D$ is a $\V$-free group with $\cont$ many generators. Note that $K^I\cong L^I$, as $I$ is uncountable.
\qed

\section{Proofs of Theorems \ref{ZFC} and \ref{CH}}
\label{sec:8}

The proof of the following well-known fact can be found, for example, in \cite[Theorem 4.15 and Discussion 4.14]{CRT1}.

\begin{fact}
\label{standard:images}
Let $G$ be a compact abelian group.
\begin{itemize}
\item[(i)] If $G$ is connected, then there exists a continuous surjective homomorphism of $G$ onto $\T^{w(G)}$.
\item[(ii)] If $\tau$ is a cardinal such that $\omega<\mathrm{cf}(\tau)\le\tau\le w(G)$, then there exists a continuous surjective homomorphism $f:G\to H=K^\tau$, where $K=\T$ or $K=\Z(p)$ for some prime number $p$.  
\end{itemize}
\end{fact}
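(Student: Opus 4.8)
The plan is to pass to Pontryagin--van Kampen duals and translate both assertions into statements about subgroups of the discrete abelian group $A=\widehat{G}$. Recall that $|\widehat{G}|=w(G)$ for infinite $G$, that $G$ is connected if and only if $\widehat{G}$ is torsion-free, that $\widehat{\T}\cong\Z$, $\widehat{\Z(p)}\cong\Z(p)$ and $\widehat{K^{\tau}}\cong\widehat{K}^{(\tau)}=\bigoplus_{\tau}\widehat{K}$, and that a continuous homomorphism $f\colon G\to H$ of compact abelian groups is surjective precisely when the dual $\widehat{f}\colon\widehat{H}\to\widehat{G}$ is injective (indeed $f(G)$ is closed in $H$, its annihilator in $\widehat{H}$ is $\ker\widehat{f}$, and a compact abelian group with trivial dual is trivial). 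Hence it suffices to embed $\Z^{(\tau)}$ into $\widehat{G}$ to obtain a continuous surjection $G\twoheadrightarrow\T^{\tau}$, and to embed $\Z(p)^{(\tau)}$ into $\widehat{G}$ to obtain $G\twoheadrightarrow\Z(p)^{\tau}$; in each case the inclusion dualizes to the desired map.

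For (i), we may assume $G$ is infinite, since a connected finite group is trivial. Then $A=\widehat{G}$ is torsion-free with $|A|=w(G)$, so $A$ embeds in its divisible hull $\mathbb{Q}^{(\rho)}$, where $\rho=r_{0}(A)$ is the torsion-free rank; thus $|A|\le\max\{\rho,\omega\}$, and therefore $\rho=w(G)$ whenever $w(G)>\omega$. A maximal independent subset $S\subseteq A$ has $|S|=\rho$ and generates a free subgroup $\grp{S}\cong\Z^{(\rho)}$, whose inclusion dualizes to a continuous surjection $G\twoheadrightarrow\T^{\rho}$. (When $w(G)=\omega$ the group $G$ is metrizable and $r_{0}(\widehat{G})$ can be strictly smaller than $\omega$; this borderline case is to be read with $w(G)$ replaced by $r_{0}(\widehat{G})$, and in all applications of this fact one has $w(G)>\omega$.)

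For (ii), $A=\widehat{G}$ is now an arbitrary discrete abelian group with $|A|=w(G)\ge\tau$, and I would split according to the torsion-free rank $r_{0}(A)$. If $r_{0}(A)\ge\tau$, pick an independent family of $\tau$ elements of infinite order; it generates a subgroup isomorphic to $\Z^{(\tau)}$, which dualizes to $G\twoheadrightarrow\T^{\tau}$, so we take $K=\T$. If $r_{0}(A)<\tau$, then $A/t(A)$ is torsion-free of rank $<\tau$, hence embeds in some $\mathbb{Q}^{(\lambda)}$ with $\lambda<\tau$ and has cardinality $\max\{\lambda,\omega\}<\tau$ (using $\tau>\omega$); since $|A|=|t(A)|\cdot|A/t(A)|\ge\tau$, this forces $|t(A)|\ge\tau$. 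Write $t(A)=\bigoplus_{p\in\P}t(A)_{p}$; as $\mathrm{cf}(\tau)>\omega$ and $\P$ is countable, the relation $\sum_{p\in\P}|t(A)_{p}|\ge\tau$ forces $|t(A)_{p}|\ge\tau$ for some prime $p$. Now for any abelian $p$-group $B$ the assignments $x+B[p^{n}]\mapsto p^{n}x$ define injections $B[p^{n+1}]/B[p^{n}]\hookrightarrow B[p]$, whence $|B[p^{n}]|\le\max\{|B[p]|,\omega\}$ for every $n$ and consequently $|B|=\bigl|\bigcup_{n}B[p^{n}]\bigr|\le\max\{|B[p]|,\omega\}$; applied to $B=t(A)_{p}$ with $|t(A)_{p}|\ge\tau>\omega$ this yields $|t(A)_{p}[p]|\ge\tau$. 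Since $t(A)_{p}[p]$ is a vector space over the field $\Z(p)$ of dimension $\ge\tau$, it contains a copy of $\Z(p)^{(\tau)}$; embedding this into $A=\widehat{G}$ and dualizing gives $G\twoheadrightarrow\Z(p)^{\tau}$, so here $K=\Z(p)$.

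The duality bookkeeping and the routine checks that the exhibited maps dualize as claimed are the easy parts. The real content, and the step where I expect the only genuine care to be needed, is the cardinal-arithmetic concentration in (ii): it is here that the hypothesis $\mathrm{cf}(\tau)>\omega$ is used (twice), in combination with the elementary but easy-to-miss bound $|B|\le\max\{|B[p]|,\omega\}$ for abelian $p$-groups, to replace a possibly complicated large primary component of $\widehat{G}$ by a homogeneous elementary subgroup $\Z(p)^{(\tau)}$ (respectively, to extract a free subgroup $\Z^{(\tau)}$ in the torsion-free-rank case). All of this is carried out in ZFC.
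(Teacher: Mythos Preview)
Your argument is correct. The paper does not supply its own proof of this fact; it simply cites \cite[Theorem~4.15 and Discussion~4.14]{CRT1}. Your route via Pontryagin duality---reducing (i) to finding a free subgroup of rank $w(G)$ in the torsion-free group $\widehat{G}$, and (ii) to a rank dichotomy followed by primary decomposition and the socle bound $|B|\le\max\{|B[p]|,\omega\}$ for abelian $p$-groups---is the standard one and is essentially what the cited reference does. The two uses of $\mathrm{cf}(\tau)>\omega$ (to pass from $|t(A)|\ge\tau$ to a single primary component, and implicitly to ensure $\tau>\omega$) are exactly where that hypothesis is needed.

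Your caveat about part (i) in the metrizable case is on the mark: as literally stated, (i) fails for $G=\T$, since $\Z^{(\omega)}$ does not embed in $\Z$. The paper uses (i) only through Fact~\ref{commutator:fact}(vii), which carries the hypothesis $\mathrm{cf}(w(G))>\omega$, so no harm results; but your reading---replace $w(G)$ by $r_0(\widehat{G})$ when $w(G)=\omega$---is the honest formulation.
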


The proof of the following fact can be found in \cite{HMbook}.

\begin{fact}
\label{weights}
If $N$ is a totally disconnected closed normal subgroup of a compact connected group $K$, then $w(K/N)=w(K)$.
\end{fact}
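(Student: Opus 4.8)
The plan is to establish the nontrivial inequality $w(K)\le w(K/N)$; the reverse $w(K/N)\le w(K)$ holds because $K/N$ is a continuous image of the compact group $K$. Write $q\colon K\to K/N$ for the quotient homomorphism. We may assume $K$ infinite (a finite connected group is trivial), and then $K/N$ is infinite too: otherwise $N$ would be a closed subgroup of finite index, hence open, hence equal to $K$ by connectedness, forcing $K$ to be totally disconnected and therefore trivial. The first step is that $N\subseteq Z(K)$: for $n\in N$ the conjugacy class $\{knk^{-1}:k\in K\}$ is a continuous image of the connected group $K$, hence a connected subset of the totally disconnected group $N$, hence $\{n\}$. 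Next I would invoke the structure theory of compact connected groups (this is the one place \cite{HMbook} is used): $K=Z_0\cdot K'$, where $Z_0:=Z(K)_0$ is the identity component of the centre (a compact connected abelian group) and $K':=\overline{[K,K]}$ is a compact connected semisimple group. As $Z_0$ is central, multiplication gives a continuous surjective homomorphism $Z_0\times K'\to K$, so $w(K)\le w(Z_0\times K')=\max\{\omega,w(Z_0),w(K')\}$; hence it suffices to prove $w(Z_0)\le w(K/N)$ and $w(K')\le w(K/N)$ (these being trivial when the group in question is finite, since $w(K/N)\ge\omega$).

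For $Z_0$ the key tool is a duality lemma: \emph{if $A$ is an infinite compact connected abelian group and $B\le A$ is a totally disconnected closed subgroup, then $w(A/B)=w(A)$}. By Pontryagin duality $\widehat{A/B}=B^{\perp}\le\widehat A$ and $\widehat A/B^{\perp}\cong\widehat B$, where $\widehat A$ is torsion-free (as $A$ is connected) and $\widehat B$ is torsion (as $B$ is profinite). Whenever $G$ is a torsion-free abelian group and $H\le G$ has torsion quotient, $G=\bigcup_{n\ge 1}\{g\in G:ng\in H\}$ and multiplication by $n$ embeds the $n$-th term into $H$, so $|G|\le\omega\cdot|H|$; since $|\widehat C|=w(C)$ for an infinite compact abelian group $C$, this yields $w(A)\le\max\{\omega,w(A/B)\}=w(A/B)$, and the reverse inequality is clear. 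Applying the lemma with $A=Z_0$ and $B=Z_0\cap N$ (totally disconnected, being a subgroup of $N$) gives $w(Z_0)=w\bigl(Z_0/(Z_0\cap N)\bigr)$; and $Z_0/(Z_0\cap N)\cong q(Z_0)$ (a quotient of the compact group $Z_0$) is a subspace of $K/N$, so $w(Z_0)=w(q(Z_0))\le w(K/N)$.

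For $K'$ I would use the structure theory once more: $K'=\bigl(\prod_{j\in J}S_j\bigr)/D$ with each $S_j$ a simply connected simple compact Lie group and $D$ a closed subgroup of $Z\bigl(\prod_j S_j\bigr)=\prod_j Z(S_j)$. The central subgroup $N\cap K'$ of $K'$ has preimage $P$ in $\prod_j S_j$ satisfying $D\le P\le\prod_j Z(S_j)$ and $K'/(N\cap K')\cong\bigl(\prod_j S_j\bigr)/P$; since $P\le\prod_j Z(S_j)$, the latter group admits a continuous surjection onto $\prod_j\bigl(S_j/Z(S_j)\bigr)$, a product of nontrivial metrizable groups, of weight $\max\{\omega,|J|\}=w\bigl(\prod_j S_j\bigr)\ge w(K')$. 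Hence $w(K')\le w\bigl(K'/(N\cap K')\bigr)=w(q(K'))\le w(K/N)$. Combining the two bounds, $w(K)\le\max\{\omega,w(Z_0),w(K')\}\le w(K/N)$, as required. The main difficulty is seeing \emph{why} connectedness of $K$ keeps $N$ small relative to $K/N$, and the two reductions isolate the mechanisms: on the abelian side it is the elementary fact that a torsion quotient of a torsion-free group cannot exceed it in cardinality (dually, a totally disconnected quotient of a connected compact abelian group has the same weight); on the semisimple side it is that the adjoint quotient $\prod_j S_j/Z(S_j)$ is a common further quotient of $K'$ and of $K'/(N\cap K')$ whose weight bounds $w(K')$ from below. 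Both require the decomposition $K=Z(K)_0\cdot\overline{[K,K]}$ and the presentation of a compact connected semisimple group as a quotient of a product of simply connected simple compact Lie groups — precisely the content supplied by the structure theory of compact connected groups.
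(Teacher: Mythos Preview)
Your argument is correct, but there is nothing to compare it with: the paper does not prove this fact at all and simply refers the reader to \cite{HMbook}. Your proof is self-contained modulo the two structural inputs you explicitly flag---the decomposition $K=Z(K)_0\cdot K'$ and the presentation of a compact connected semisimple group as a central quotient of a product of simply connected simple compact Lie groups---both of which are standard results from \cite{HMbook} and are in fact restated (and used) later in the paper as parts of Fact~\ref{commutator:fact}. One small remark: since the paper's own proof of Fact~\ref{commutator:fact}(iv),(v) \emph{invokes} Fact~\ref{weights}, it is worth stressing that your argument does not rely on Fact~\ref{commutator:fact}; you use the underlying Hofmann--Morris structure theory directly, so there is no circularity. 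The duality computation on the abelian side and the ``pass to the adjoint quotient'' trick on the semisimple side are exactly the right mechanisms, and each step (including the observation that $N\subseteq Z(K)$ and that $K/N$ is infinite when $K$ is) is sound.
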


We denote by $G'$ the commutator subgroup of a group $G$. Recall that a group $G$ is {\em  perfect\/} if $G=G'$. A {\em semisimple\/} group is a perfect compact connected group \cite[Definition 9.5]{HMbook}. For a topological group $G$, we use $c(G)$ to denote the connected component of $G$ and we use $Z(G)$ for denoting the center of $G$. We need the following well-known fact.

\begin{fact}
\label{commutator:fact}
Let $G$ be a non-trivial compact connected group and let $A = c(Z(G))$. 
\begin{itemize}
 \item[(i)] $G=A \cdot G'$ and $\Delta= A\cap G'$ is totally disconnected;
 \item[(ii)] $G\cong (A\times G')/\Delta$ and $G/\Delta \cong A/\Delta \times G'/\Delta$;
 \item[(iii)] $w(G) = \max\{w(A), w(G')\}$;
 \item[(iv)] $w(A)= w(A/\Delta) = w(G/G')$;
 \item[(v)] if $G= G'$ is semisimple, then $A=\Delta= \{e\}$, $G/Z(G)$ is a product of compact simple Lie groups and $w(G/Z(G))=w(G)$; 
 \item[(vi)] the group  $G/\Delta$ admits a continuous surjective homomorphism onto $\T^{w(A)} \times \prod_{i\in I}L_i$, where each $L_i$ is a compact  simple Lie group and $w(G') = \omega\cdot |I|$; 
 \item[(vii)] if $\mathrm{cf}(w(G))> \omega$, then $G$ admits a  continuous surjective homomorphism onto $\T^{w(G)}$ or onto $L^{w(G)}$, for some compact  simple  Lie group $L$. 
\end{itemize}
\end{fact}

\begin{proof}
(i) This can be found in \cite[Theorem 9.24]{HMbook}.

(ii) Since $A$ is a central subgroup of $G$, the map $ f: A \times G'\to G$ defined by $f(a,g)=a^{-1}g$ for $(a,g)\in A \times G'$, is a continuous group homomorphism.
Clearly, $f$ is surjective. Since $\ker f = \Delta^*=\{(x,x):x\in\Delta\}\subseteq  A\times G'$ and $\Delta^*\cong\Delta$, we conclude that 
$$
G \cong   (A \times G')/\ker f=(A \times G')/\Delta^*
\cong 
(A \times G')/\Delta.
$$ 
Moreover, since 
$(\Delta \times \Delta)/\Delta^*=(\Delta \times \Delta)/\ker f=f(\Delta \times \Delta) = \Delta$, 
we obtain  
$$
A/\Delta \times G'/\Delta \cong  (A \times G')/(\Delta \times \Delta)\cong 
((A \times G')/\Delta^*)/((\Delta \times \Delta)/\Delta^*)
\cong G/\Delta.
$$

(iii)
 From (i) it follows that $G$ is a continuous image of $A \times G'$, so $w(G)\le w(A \times G')=\max\{w(A),w(G')\}$. Since both $A$ and $G'$ are subgroups of $G$, $\max\{w(A),w(G')\}\le w(G)$. This establishes 
(iii).

(iv)
 Since $A$ is connected, the first equality follows from (i) and Fact \ref{weights}. 
 From (i) one easily gets the  isomorphism  $G/G' \cong A/\Delta$, which gives the second equality.

(v)
 This is a particular case of a theorem of Varopoulos \cite{V}. The equality $w(G/Z(G))=w(G)$ follows from Fact \ref{weights} since $Z(G)$ is totally disconnected \cite[Theorem 9.19]{HMbook}.

(vi) 
By  (iv) and Fact \ref{standard:images}(i),
the connected compact abelian group $A/\Delta $ admits a continuous surjective homomorphism onto $\T^{w(A)}$. 

Since  $\Delta\subseteq Z(G)\subseteq Z(G')$,  the group $G'/\Delta$  has $G'/Z(G')$  as  its quotient.  Since $G'$ is semisimple \cite[Corollary 9.6]{HMbook}, from this  and item  
(v)
it follows that $G'/\Delta$ admits a continuous surjective homomorphism  onto a product $\prod_{i\in I}L_i$, where each $L_i$ is a compact  simple Lie group and 
$w(G') = w(G'/Z(G'))=\omega \cdot |I|$. 

Since $G/\Delta \cong A/\Delta \times G'/\Delta$ by (ii), we get the conclusion of item (vi).

(vii) Follows from 
(iii), (vi)
and the fact that there are only countably many pairwise non-isomorphic (as topological groups) compact  simple Lie groups. 
\end{proof}

\medskip
\noindent
{\bf Proof of Theorem \ref{ZFC}:}
Suppose that $G$ is not metrizable. If $G$ is abelian, we can  use Fact \ref{standard:images}(ii) to find a continuous surjective homomorphism  $f:G\to H=L^{\omega_1}$, 
where $L$ is either $\T$ or $\Z(p)$ for some prime number $p$. If $G$ is connected, we first use
Fact \ref{cont:images:of:smaller:weight} to find a continuous homomorphism
of $G$ onto (compact connected) group of weight $\omega_1$, and then we apply Fact \ref{commutator:fact}~(vii) 
to find a continuous surjective homomorphism  $f:G\to H=L^{\omega_1}$,  where $L$ is either $\T$ or a compact simple Lie group. When $L$ is abelian, we apply 
Corollary \ref{semi-Bernstein:theorem}
with $I=\omega_1$ to get a subgroup $D$  of $H$ as in the conclusion of this corollary. When $L$ is a compact simple Lie group,  we apply Corollary
\ref{Lie:corollary} with $I=\omega_1$ to get a subgroup $D$  of $H$ as in the conclusion of this corollary.
In both cases, we use Fact \ref{pull:back} to conclude  that $D_1=f^{-1}(D)$ is a $G_\delta$-dense subgroup of $G$ that is not projectively $w$-compact. This contradicts the assumption of our theorem. Therefore, $G$ must be metrizable.
\qed
\medskip

\begin{lemma}
\label{claim:1}
Assume CH. If $K=\T$ or $K=\Z(p)$ for some prime number $p$, then $H=K^{\omega_1}$ has a dense countably compact subgroup $D$ without infinite compact subsets.
\end{lemma}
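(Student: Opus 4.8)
Under CH we have $\cont=\omega_1$, so the statement calls for a transfinite recursion of length $\omega_1=\cont$. The plan is a construction in the spirit of the classical van Douwen / Comfort--Robertson technique for producing dense countably compact subgroups without non-trivial convergent sequences, with an extra twist needed to destroy \emph{all} infinite compact subsets.

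\textbf{Set-up and reduction.} Here $K$ is compact and metrizable with $|K|\le\cont$, and $H=K^{\omega_1}\cong (K^\omega)^{\omega_1}$ is separable by the Hewitt--Marczewski--Pondiczery theorem. I would build an increasing chain of subgroups $D_\alpha\le H$ ($\alpha<\omega_1$), each generated by an independent set $\{d_\beta:\beta<\alpha\}$ (so $|D_\alpha|\le|\alpha|+\omega$), together with an increasing chain of ``forbidden'' sets $Q_\alpha\subseteq H$ with $|Q_\alpha|\le|\alpha|+\omega$ and $D_\alpha\cap Q_\alpha=\emptyset$, and put $D=\bigcup_{\alpha<\omega_1}D_\alpha$. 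The requirements on $D$ are (a) density in $H$; (b) countable compactness, i.e.\ every countably infinite $S\subseteq D$ has an accumulation point lying in $D$; and (c) absence of infinite compact subsets. For (c) it suffices to arrange (c$'$): every countably infinite $S\subseteq D$ has an accumulation point in $H\setminus D$. Indeed, if $C\subseteq D$ were infinite and compact, picking a countably infinite $S\subseteq C$ we would get $\overline S^H\subseteq\overline C^H=C\subseteq D$, contradicting (c$'$). Since $\mathrm{cf}(\omega_1)>\omega$, every countably infinite subset of $D$ uses only countably many of the generators $d_\beta$ and hence lies in some $D_\alpha$ with $\alpha<\omega_1$; so a standard bookkeeping — using CH to enumerate the at most $\omega_1$ countable subsets of each $D_\alpha$, interleaved with a list $\{U_\xi:\xi<\omega_1\}$ of a base of $H$ for density — lets me assign one ``task'' to each stage and meet every requirement.

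\textbf{The recursive step.} The combinatorial heart is the following: \emph{every faithfully indexed sequence in $H=K^{\omega_1}$ either has a convergent subsequence or has at least $\omega_1$ accumulation points in $H$}. (If it had only countably many accumulation points, the closure of its range would be a countable compact Hausdorff space, hence metrizable; but in an infinite compact metric space every sequence has a convergent subsequence — a contradiction.) I maintain throughout the invariant $(\#)$: \emph{$D_\alpha$ has no infinite compact subset}, equivalently (as $D_\alpha$ is countable, and countable compact spaces are metrizable) no countably infinite subset of $D_\alpha$ has a convergent subsequence in $H$. Given a task $S\subseteq D_\alpha$ at stage $\alpha$: by $(\#)$, $S$ has no convergent subsequence, so by the dichotomy it has at least $\omega_1$ accumulation points; since $|D_\alpha|+|Q_\alpha|\le|\alpha|+\omega<\omega_1$, after discarding at most countably many ``forbidden'' accumulation points I may choose two distinct accumulation points $p_S,q_S$ of $S$ with $p_S,q_S\notin D_\alpha\cup Q_\alpha$. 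I set $D_{\alpha+1}=\langle D_\alpha\cup\{p_S\}\rangle$ — so that $S$ acquires the accumulation point $p_S\in D$, settling (b) — and $Q_{\alpha+1}=Q_\alpha\cup\{q_S\}$ — committing never to return $q_S$ to $D$, settling (c$'$). At a density stage I instead adjoin a new independent point of $U_\xi$. Keeping $D_{\alpha+1}\cap Q_{\alpha+1}=\emptyset$ is routine: $D_{\alpha+1}\setminus D_\alpha=\{x+kp_S:x\in D_\alpha,\ k\ne 0\}$ (with $k\in\{1,\dots,p-1\}$ when $K=\Z(p)$), so one only needs $p_S$ to avoid the countably many values $k^{-1}(q-x)$, $q\in Q_{\alpha+1}$, $x\in D_\alpha$.

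\textbf{The main obstacle.} The delicate point — the one I expect to be hardest — is preserving $(\#)$. When a single point $p_S$ is adjoined, a faithfully indexed sequence in $\langle D_\alpha\cup\{p_S\}\rangle$ has the form $(x_n+k_np_S)$ with $x_n\in D_\alpha$; if the integers $k_n$ are bounded — automatic when $K=\Z(p)$ — one passes to a subsequence with $k_n$ constant and, translating by a fixed multiple of $p_S$, reduces to a sequence in $D_\alpha$, which has no convergent subsequence by $(\#)$. Thus for $K=\Z(p)$ the invariant survives adjoining a point for free. For $K=\T$ one must additionally forbid those accumulation points $p$ of $S$ for which some sequence $x_n+k_np$ with $x_n\in D_\alpha$ and $|k_n|\to\infty$ converges; the crux is to verify that such ``bad'' accumulation points do not exhaust the $\ge\omega_1$ available ones — one shows they are confined to a countable union of proper closed cosets, using the independence of the $d_\beta$'s (so that, until the density requirements force otherwise, $D_\alpha$ stays ``small''). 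The same genericity in the choice of each new generator $d_\beta$ also handles the preservation of $(\#)$ at limit stages of countable cofinality, where \emph{a priori} a ``convergent series'' in the generators might appear: each $d_\beta$ is chosen generic enough over $D_\beta$ that it cannot serve as a term of a convergent sequence assembled from the earlier stages. Granting $(\#)$ throughout and (c$'$) for $D$, conditions (a)--(c) hold and $D$ is the required dense countably compact subgroup of $H$ without infinite compact subsets.
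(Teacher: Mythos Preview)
Your approach is quite different from the paper's. The paper does not construct $D$ directly: it cites known results of Tkachenko (for $K=\T$) and Dikranjan--Tkachenko / Hajnal--Juh\'asz (for $K=\Z(p)$) to obtain, under CH, a dense countably compact subgroup $D\subseteq K^{\omega_1}$ with $|D|=\omega_1$ and no non-trivial convergent sequences, and then finishes with a short cardinality argument: if $X\subseteq D$ were infinite and compact it would have no point of countable character (else $D$ would contain a convergent sequence), so $|X|\ge 2^{\omega_1}>\omega_1=|D|$ by the \v{C}ech--Pospi\v{s}il theorem, a contradiction.

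Your outline instead tries to reprove those cited constructions from scratch, and the step you yourself flag as ``the main obstacle'' is a genuine gap rather than a routine detail. Observe first that (b) and (c$'$) together already force $D$ to have no non-trivial convergent sequences (a convergent sequence has a unique accumulation point, which cannot lie simultaneously in $D$ and in $H\setminus D$); so the whole $Q_\alpha$ mechanism is redundant --- once $(\#)$ holds for $D$ and $|D|=\omega_1$, the \v{C}ech--Pospi\v{s}il argument above yields (c) for free --- and conversely you cannot avoid establishing $(\#)$ for all of $D$. Your dichotomy and your successor-step argument for $K=\Z(p)$ are correct, but they do not secure $(\#)$: for $K=\Z(p)$ the limit stages are the problem (nothing you have written prevents the generating sequence $(d_n)_{n<\omega}$ itself from converging in $H$, whereupon $D_\omega$ already violates $(\#)$), and for $K=\T$ even the successor step is not under control --- the assertion that the ``bad'' accumulation points lie in a countable union of proper closed cosets is unsupported, and with $\cont$ many candidate pairs $((x_n),(k_n))$ a naive avoidance argument does not go through. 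These are exactly the difficulties that the Tkachenko and HFD constructions overcome with specific, non-generic devices; in effect you have relocated the entire content of the lemma into the unproved invariant $(\#)$.
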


\begin{proof}
We consider two cases.

\smallskip
{\sl Case 1\/}. $K=\T$.
Tkachenko \cite{Tka} constructed a dense countably compact subgroup $D$ of $K^{\omega_1}$ such that $|D|=\cont=\omega_1$ and $D$ has no non-trivial convergent sequences. 

\smallskip
{\sl Case 2\/}.
{\em $K=\Z(p)$ for some prime number $p$\/}.
In this case we can argue as follows. Since CH implies Martin's Axiom MA, and the group $L=\Z(p)^\omega$ is compact (in the Tychonoff product topology),
by the implication (a)~$\to$~(c) of \cite[Theorem 3.9]{DT}, the group $L$ admits a countably compact group topology without non-trivial convergent sequences.
An analysis of this proof shows that this topology comes from a monomorphism  
$j:L\to \Z(p)^\cont$ such that $D=j(L)$ is a dense subgroup of $\Z(p)^\cont$.
Under CH, we conclude that $H=K^{\omega_1}$ has a dense countably compact subgroup $D$ without non-trivial convergent sequences.
\footnote{In case $p=2$, one can also make a recourse to an old result of Hajnal and Juh\'asz \cite{HJ} claiming the existence of a subgroup $D$ of $K^{\omega_1}$ that is an HFD set. Such $D$ is a dense countably compact subgroup of $K^{\omega_1}$ without infinite compact subsets.}

\smallskip
The rest of the proof is common for both cases. Suppose that $X$ is an infinite compact subset of $D$. Since $D$ has no non-trivial convergent sequences, $X$ does not have any point of countable character. Then $|X|\ge 2^{\omega_1}>\omega_1=\cont$ by the \v{C}ech-Pospi\v{s}il theorem. This contradicts the inequality $|X|\le |D|=\cont$. This proves that every compact subset $X$ of $D$ is finite.
\end{proof}

\medskip
\noindent
{\bf Proof of Theorem \ref{CH}:}
Suppose that $G$ is not metrizable. Use Fact \ref{standard:images}(ii) to find a continuous surjective homomorphism  $f:G\to H=K^{\omega_1}$, 
where $K$ is either $\T$ or $\Z(p)$ for some prime number $p$. Let $D$ be a dense countably compact subgroup of $H$ without infinite compact subsets 
constructed in Lemma \ref{claim:1}. Since $D$ is dense in $H$, $w(D)=w(H)=\omega_1$. This shows that $D$ is not 
$w$-compact. By Fact \ref{pull:back}, $D_1=f^{-1}(D)$ is a dense countably compact  subgroup of $G$ that is not projectively $w$-compact. This contradicts the assumption of our theorem. Therefore, $G$ must be metrizable.
\qed
\medskip

\section{Examples}
\label{sec:examples}

\begin{example}
\label{proj:AV:not:proj:W-compact}
{\em 
For every cardinal $\tau$ such that $\omega_1\le\tau\le\cont$, there exists a pseudocompact \sAV\ group $D$ of weight $\tau$ that is not 
$w$-compact. Furthermore, under CH, $D$ can be chosen to be even countably compact.\/} Indeed, let $K=\T$ or $\Z(p)$ for some prime number $p$. Apply Corollary
\ref{semi-Bernstein:theorem} to $L=K$ and  $I=\tau$ to  find a $G_\delta$-dense subgroup $D$ of $K^\tau$ such that all compact subsets of $D$ are 
countable; in particular, $D$ is not $w$-compact. By Theorem \ref{CR:theorem}, $D$ is pseudocompact.
Under CH, we can use Lemma \ref{claim:1} to choose $D$ to be even  countably compact.
Since $w(D)= w(K^\tau)=\tau\le\cont$, from Proposition \ref{small:psc:are:sAV} we conclude that $D$ is \sAV.
\end{example}

Recall that a subgroup $D$ of a topological abelian group $G$ is called {\em essential\/} in $G$ if $D\cap N=\{0\}$ implies $N=\{0\}$ for every  closed subgroup $N$ of $G$  {\cite{B,P,St}}. A topological group $G$ is called \emph{minimal\/}  if there exists no Hausdorff group topology on $G$ strictly coarser  than the topology of $G$. A dense subgroup $D$ of a compact abelian group $G$ is  minimal if and only if $D$ is essential in $G$ \cite{B,P,St}.

\begin{example}\label{ex:omega-bounded}  
{\em 
Let $p$ be a prime number and $\kappa$ be an infinite cardinal. Define $\tau=2^{2^{2^\kappa}}$. Then
 there exists a dense essential (=minimal) $\kappa$-bounded $w$-compact subgroup of $\Z(p^2)^\tau$  that  is not \sAV\/}.
Indeed, let $G=\Z(p^2)^\tau$ and  let $f:G\to G$ be the (continuous) map defined $f(g)=pg$ for $g\in G$. Let $H=f(G)$. Then $H\cong \Z(p)^{\tau}$.
 From Proposition \ref{ex1}(ii), we get a  dense $\kappa$-bounded subgroup $D$ of $H \cong\Z(p)^\tau$ without the  \AVp.
Applying Fact \ref{pull:back}, we conclude that  $D_1=f^{-1}(D)$ is a dense $\kappa$-bounded subgroup of $G$ that 
is not \sAV. Since $pG=\ker f$ is easily seen to be an essential subgroup of $G$,
from $\ker f\subseteq D_1$ it follows that $D_1$ is an essential subgroup of $G$.
Finally, note that $\ker f\cong Z(p)^\tau$ is a compact subset of $D_1$
such that $w(\ker f)=w (Z(p)^\tau)=\tau=w(G)=w(D_1)$, which shows  that $D_1$ is $w$-compact.
\end{example}

For an infinite cardinal $\sigma$, define $\log \sigma = \min\{\tau \geq \omega : \sigma \leq 2^\tau\}$.
Let $\beth_0 =\omega$,  and let $\beth_{\alpha + 1}=2^{{\beth_\alpha  }} $ for every ordinal $\alpha$ and $\beth_\beta= \sup\{\beth_\alpha : \alpha < \beta\}  $ for every limit ordinal $\beta>0$. 

\begin{example}
\label{psc:example}
{\em
Let $G$ be a compact  group of weight $\sigma > \omega$.
} 
\begin{itemize}
  \item[(i)] {\em If $\mathrm{cf}(\log \sigma) = \omega$ and  $\sigma = (\log \sigma)^+$, then every $G_\delta$-dense subgroup of $G$ has the \AVp.\/}
Indeed, by Theorem \ref{nonAV:Gdelta}, it suffices to show that $m(\sigma)\ge\sigma$.
It is known that $\log \sigma\le m(\sigma)$ and $\mathrm{cf}(m(\sigma)) > \omega$ \cite[Theorem 2.7]{CoRo}. Therefore,
$m(\sigma) > \log \sigma$ and $m(\sigma)\ge (\log\sigma)^+=\sigma$ by our hypothesis.
  \item[(ii)] {\em If $\alpha$ is an ordinal of countable cofinality and $\sigma=\beth_\alpha^+$, then all $G_\delta$-dense subgroups of $G$ have the  \AVp.\/}
Indeed, it suffices to check that $\sigma = \beth_{\alpha}^+$ satisfies the hypothesis of item  (i).
Obviously,
$\log \sigma = \beth_{\alpha}$, so $\mathrm{cf}(\log\sigma)=\mathrm{cf}(\beth_{\alpha})= \mathrm{cf}(\alpha) = \omega$
and $\sigma = \beth_{\alpha}^+=(\log \sigma)^+$.
\end{itemize}
\end{example}

Here is an alternative proof of  item (ii) of this example that makes no recourse to its item (i)
and the cardinal function  $m(-)$.  Assume that $D$ is  a $G_\delta$-dense subgroup of $G$ without
the \AVp. Then $|D| < w(D)= w(G)=\beth_{\alpha}^+$, so $|D| \leq \beth_{\alpha}$. Since $\beth_{\alpha}$ is strong limit and $\beth_{\alpha}^+ = w(D) \leq 2^{|D|}$, we deduce that $|D| = \beth_{\alpha}$. Therefore, $D$ is a pseudocompact group such that $|D|$ a strong limit cardinal of countable cofinality. This contradicts a well-known theorem of van Douwen \cite{vD}. 

\section{Final remarks and open questions}
\label{sec:problems}

\begin{remark}
While ``projectively $w$-compact'' and ``\sAV'' are different properties when restricted to a single group,  the equivalence of items (ii) and (iv) of 
Corollary \ref{huge:corollary}
shows that these two properties and the property ``determining the completion'' coincide when imposed uniformly on  {\em all\/} dense subgroups of a given compact abelian group. Similarly, while it is unclear whether ``determining the completion'' and ``projectively $w$-compact'' are different properties for any given group, the equivalence of items (iii) and (v) of 
Corollary \ref{huge:corollary}
shows that these two properties coincide when imposed uniformly on 
{\em all\/} $G_\delta$-dense subgroups of a given compact abelian group.
\end{remark}

Recall that a topological group $G$ is called {\em totally minimal\/} if all (Hausdorff) quotient groups of $G$ are minimal.

\begin{remark}
\label{totally:minimal:remark}
\begin{itemize}
\item[(i)]
In a forthcoming paper \cite{DS:2} we prove that {\em every dense totally minimal subgroup of a compact abelian group $G$ determines $G$\/}. 
This shows that, in contrast with the results in Section \ref{metrization:section}, a weaker form of ``determination'' asking all 
dense totally minimal subgroups of $G$ to determine $G$ imposes no restrictions whatsoever  on a compact abelian group $G$.
\item[(ii)]
In a forthcoming paper \cite{DS1}
we prove that {\em totally minimal abelian groups are projectively $w$-compact\/}. Therefore, the italicized statement in item (i) 
shows that the answer to Question \ref{question:w-compact} is positive for 
this (proper) subclass of the class of projectively $w$-compact groups. 
\end{itemize}
\end{remark}

\begin{question}\label{Ques:w-compact}
What can one say about a compact (abelian) group $G$ such that all dense subgroups of $G$ are $w$-compact?
\end{question}

 From Theorem \ref{strong:limit} and Diagram 1 it follows that $w(G)$ must be a strong limit cardinal, but we do not know if $G$ must be metrizable.

\begin{question}
What is the minimal weight $\sigma$ of an $\omega$-bounded  abelian group that is not \sAV? Is $\sigma=\cont^+$?
\end{question}

We only know that $\cont^+\le\sigma\le 2^{2^\cont}$. The first inequality 
follows from Proposition \ref{small:psc:are:sAV} and the second inequality 
follows from Example \ref{ex:omega-bounded} (with $\kappa=\omega$).

\begin{question} 
Does Theorem \ref{ZFC} hold for all compact groups?
\end{question}

\begin{question}
\label{omitting:CH}
Does Theorem \ref{CH} hold in ZFC? Does  the implication (vi)$\to$(i) of Corollary \ref{huge:corollary} hold in ZFC?
\end{question}

As an intermediate step to solving this question, one may also wonder if CH can be weakened to Martin's Axiom MA in  Theorem \ref{CH} and 
the implication (vi)$\to$(i) of Corollary \ref{huge:corollary}.

We conjecture that the following question has a negative answer (although we have no counter-example at hand): 

\begin{question}
If every $\omega$-bounded dense subgroup of a compact abelian group $G$ determines it, must $G$ be metrizable?
\end{question}

Here come the counterpart of Question \ref{Ques:w-compact} for  $G_\delta$-dense subgroups: 

\begin{question} Describe the compact (abelian) groups $G$ such that every $G_\delta$-dense subgroup of $G$ is 
$w$-compact. 
\end{question}

\begin{question}
Let $K=\T$ or $K=\Z(p)$ for some prime number $p$. In ZFC, does there exist  a dense countably compact subgroup $D$ of 
$K^{\omega_1}$  without uncountable compact subsets?
\end{question}

As one can see from the proof of Theorem \ref{ZFC}, a positive answer to this question for $K=\T$ {\em and\/} $K=\Z(p)$ for {\em all\/} $p\in\P$ would yield a
positive answer to Question  \ref{omitting:CH}.

\medskip
\noindent
{\bf Acknowledgment}: The authors would like to thank Professor A.~V.~\AV\ for helpful discussions.

\end{document}